\documentclass[12pt,a4paper]{amsart}
\usepackage{amsxtra,amssymb,eucal}
\usepackage[cmtip,arrow]{xy}
\usepackage{pb-diagram,pb-xy}

\pagestyle{plain}
\raggedbottom

\textwidth=36pc
\calclayout

\hyphenation{con-tra-mod-ule con-tra-mod-ules con-tra-mod-ule-ness}

\emergencystretch=2em

\newcommand{\+}{\nobreakdash-}
\renewcommand{\:}{\colon}
\renewcommand{\.}{\mskip.5\thinmuskip}

\newcommand{\rarrow}{\longrightarrow}
\newcommand{\ot}{\otimes}
\newcommand{\ocn}{\odot}
\newcommand{\Ocn}{\circledcirc}
\newcommand{\wot}{\mathbin{\widehat\otimes}}

\DeclareMathOperator{\Hom}{Hom}
\DeclareMathOperator{\Ext}{Ext}
\DeclareMathOperator{\Tor}{Tor}

\DeclareMathOperator{\Cohom}{Cohom}
\DeclareMathOperator{\SemiExt}{SemiExt}
\DeclareMathOperator{\SemiTor}{SemiTor}

\newcommand{\Hot}{\mathsf{Hot}}

\newcommand{\C}{\mathcal C}
\newcommand{\D}{\mathcal D}
\newcommand{\E}{\mathcal E}

\newcommand{\I}{\mathcal I}
\newcommand{\J}{\mathcal J}
\newcommand{\K}{\mathcal K}
\renewcommand{\L}{\mathcal L}
\newcommand{\M}{\mathcal M}
\newcommand{\N}{\mathcal N}

\renewcommand{\P}{\mathfrak P}
\newcommand{\Q}{\mathfrak Q}
\newcommand{\fI}{\mathfrak I}
\newcommand{\fJ}{\mathfrak J}
\newcommand{\fK}{\mathfrak K}
\newcommand{\fL}{\mathfrak L}
\newcommand{\fR}{\mathfrak R}
\newcommand{\fT}{{\boldsymbol{\mathfrak T}}}

\newcommand{\bS}{{\boldsymbol{\mathcal S}}}
\newcommand{\bK}{{\boldsymbol{\mathcal K}}}
\newcommand{\bM}{{\boldsymbol{\mathcal M}}}
\newcommand{\bN}{{\boldsymbol{\mathcal N}}}
\newcommand{\bP}{{\boldsymbol{\mathfrak P}}}
\newcommand{\bQ}{{\boldsymbol{\mathfrak Q}}}

\newcommand{\Z}{\mathbb Z}
\newcommand{\boL}{\mathbb L}
\newcommand{\boR}{\mathbb R}

\newcommand{\gr}{{\mathrm{gr}}}
\renewcommand{\ss}{{\mathrm{ss}}}
\newcommand{\rop}{{\mathrm{op}}}

\newcommand{\sop}{{\mathsf{op}}}
\newcommand{\co}{{\mathsf{co}}}
\newcommand{\ctr}{{\mathsf{ctr}}}
\newcommand{\abs}{{\mathsf{abs}}}
\newcommand{\si}{{\mathsf{si}}}
\renewcommand{\b}{{\mathsf{b}}}
\newcommand{\fd}{{\mathsf{fd}}}
\newcommand{\qfc}{{\mathsf{qfc}}}
\newcommand{\qfp}{{\mathsf{qfp}}}
\newcommand{\acp}{{\mathsf{acp}}}
\newcommand{\caap}{{\mathsf{caap}}}

\newcommand{\proj}{{\mathsf{proj}}}
\newcommand{\inj}{{\mathsf{inj}}}
\newcommand{\sipr}{{\mathsf{sipr}}}
\newcommand{\siin}{{\mathsf{siin}}}
\newcommand{\wcpr}{{\mathsf{wcpr}}}
\newcommand{\wcin}{{\mathsf{wcin}}}

\newcommand{\sA}{\mathsf{A}}
\newcommand{\sB}{\mathsf{B}}
\newcommand{\sD}{\mathsf{D}}

\newcommand{\oc}{\mathbin{\text{\smaller$\square$}}}
\newcommand{\bu}{{\text{\smaller\smaller$\scriptstyle\bullet$}}}
\newcommand{\lrarrow}{\.\relbar\joinrel\relbar\joinrel\rightarrow\.}

\newcommand{\vect}{{\operatorname{\mathsf{--vect}}}}
\newcommand{\modl}{{\operatorname{\mathsf{--mod}}}}
\newcommand{\modr}{{\operatorname{\mathsf{mod--}}}}
\newcommand{\comodl}{{\operatorname{\mathsf{--comod}}}}
\newcommand{\comodr}{{\operatorname{\mathsf{comod--}}}}
\newcommand{\comodrqfc}{{\operatorname{\mathsf{comod_\qfc--}}}}
\newcommand{\comodrqfcinj}
 {{\operatorname{\mathsf{comod_\qfc^\inj--}}}}
\newcommand{\contra}{{\operatorname{\mathsf{--contra}}}}
\newcommand{\simodl}{{\operatorname{\mathsf{--simod}}}}
\newcommand{\simodr}{{\operatorname{\mathsf{simod--}}}}
\newcommand{\sicntr}{{\operatorname{\mathsf{--sicntr}}}}
\newcommand{\smooth}{{\operatorname{\mathsf{--smooth}}}}
\newcommand{\discr}{{\operatorname{\mathsf{--discr}}}}
\newcommand{\discrR}{{\operatorname{\mathsf{discr--}}}}
\newcommand{\pscomp}{{\operatorname{\mathsf{--pscomp}}}}

\newcommand{\dqfc}{{\operatorname{\mathsf{-qfc}}}}
\newcommand{\dqfp}{{\operatorname{\mathsf{-qfp}}}}
\newcommand{\qfcd}{{\operatorname{\mathsf{qfc-}}}}

\newcommand{\Section}[1]{\bigskip\section{#1}\medskip}
\setcounter{tocdepth}{1}

\theoremstyle{plain}
\newtheorem{thm}{Theorem}[section]

\newtheorem{lem}[thm]{Lemma}
\newtheorem{prop}[thm]{Proposition}
\newtheorem{cor}[thm]{Corollary}
\theoremstyle{definition}
\newtheorem{ex}[thm]{Example}
\newtheorem{exs}[thm]{Examples}
\newtheorem{rem}[thm]{Remark}

\begin{document}

\title{Smooth duality and co-contra correspondence}

\author{Leonid Positselski}

\address{Institute of Mathematics of the Czech Academy of Sciences,
\v Zitn\'a~25, 115~67 Prague~1, Czech Republic; and
\newline\indent Laboratory of Algebra and Number Theory, Institute for
Information Transmission Problems, Moscow 127051, Russia}

\email{posic@mccme.ru}

\begin{abstract}
 The aim of this paper is to explain how to get a complex of smooth
representations out of the dual vector space to a smooth representation 
of a $p$\+adic Lie group, in natural characteristic.
 The construction does not depend on any finiteness/admissibility
assumptions.
 Imposing such an assumption, one obtains an involutive duality on
the derived category of complexes of smooth modules with admissible
cohomology modules.
 The paper can serve as an introduction to the results about
representations of locally profinite groups contained in the author's
monograph on semi-infinite homological algebra~\cite{Psemi}.
\end{abstract}

\maketitle

\tableofcontents

\section{Introduction}
\medskip

\subsection{{}}
 Dualizing modules produces modules.
 If $M$ is an $B$\+$A$\+bimodule and $V$ is a left $B$\+module, then
the group of all $B$\+linear maps $\Hom_B(M,V)$ is naturally
a left $A$\+module.
 When the ring $A$ is isomorphic to its opposite ring $A^\rop$
(say, $A$ is commutative or endowed with a Hopf algebra antipode),
this means that we started from a right $A$\+module and came back
to a right $A$\+module.

 The situation gets more complicated when one starts from a module of
a particular class, like a torsion, discrete, or smooth module, and
wants to obtain a module from the same class after the dualization.
 If $\M$ is a torsion abelian group, then the Pontryagin dual group
$\M\spcheck=\Hom_\Z(\M,\mathbb Q/\Z)$ is no longer torsion,
generally speaking.
 There is, however, a covariant derived functor producing
a \emph{two-term complex} of torsion abelian groups out of
the group~$\M\spcheck$ (see, e.~g., \cite[Sections~1.4\+-1.6]{Prev}
and the introduction to~\cite{PMat}).
 This construction does not depend on the topology on
the group $\M\spcheck$, but only on its abelian group structure.

 The general philosophy is that dualizing comodules produces
contramodules, and one can get back from a contramodule to
a complex of comodules using the \emph{derived comodule-contramodule
correspondence} constructions~\cite{Psemi,Pkoszul}.
 Various kinds of torsion, discrete, or smooth modules are viewed
as species of comodules, in one or another
sense~\cite[Sections~0.1\+-0.2]{Prev}.
 Oversimplifying matters a bit, one can say that for every abelian
category of torsion, discrete, or smooth modules there is a much
less familiar, but no less interesting, related abelian category of
contramodules.
 To be more careful, one has to distinguish between left and
right co/contramodules.

 In the simplest example of the coalgebra $\C$ dual to the algebra
$\C\spcheck=k[[t]]$ of formal power series in one variable over
a field~$k$, the $\C$\+comodules are just the $k[t]$\+modules with
a locally nilpotent action of~$t$.
 The $\C$\+contramodules also form a full subcategory in
$k[t]$\+modules; it is described in~\cite[Section~A.1.1]{Psemi}
or~\cite[Sections~1.3 and~1.5\+-1.6]{Prev}.

 For any coalgebra $\C$ over a field~$k$, the dual vector space
$\N\spcheck$ to a right $\C$\+comodule $\N$ is a left
$\C$\+contramodule.
 So is the vector space $\Hom_k(\N,V)$ for an arbitrary $k$\+vector
space~$V$.
 Applying the co-contra correspondence, one obtains a nonpositively
cohomologically graded complex of left $\C$\+comodules
$\boL\Phi_\C(\Hom_k(\N,V))$ out of~$\Hom_k(\N,V)$;
in particular, the complex $\boL\Phi_\C(\N\spcheck)$.
 In the case of $\C\spcheck=k[[t]]$, this will be a two-term
complex; in the case of $\C\spcheck=k[[t_1,\dotsc,t_n]]$,
an $(n+1)$\+term complex.
 When $n=\infty$, it will be sometimes an acyclic complex!
 (See~\cite[Sections~0.2.6\+-0.2.7]{Psemi}.)

\subsection{{}} \label{loc-profinite-group-semialgebra-subsecn}
 In this paper, we are interested in representations of locally
profinite groups~$G$.
 To a profinite group $H$ and a field~$k$, one assigns the coalgebra
$\C=k(H)$ of locally constant $k$\+valued functions on~$H$.
 Discrete $H$\+modules over~$k$ are then the same thing as
$\C$\+comodules.
 To a locally profinite group $G$ with a compact open subgroup
$H\subset\nobreak G$, one assigns a $\C$\+\emph{semialgebra} $\bS=k(G)$
of compactly supported locally constant functions on~$G$.
 Smooth $G$\+modules over~$k$ are the same thing as
\emph{$\bS$\+semimodules}~\cite[Section~E.1]{Psemi},
\cite[Example~2.6]{Prev}.

 More specifically, a $\C$\+semialgebra is an associative algebra
object in the tensor category of $\C$\+$\C$\+bicomodules with
respect to the \emph{cotensor product} operation~$\oc_\C$.
 In other words, a semialgebra $\bS$ over $\C$ is
a $\C$\+$\C$\+bicomodule endowed with a \emph{semimultiplication}
morphism $\bS\oc_\C\bS\rarrow\bS$ and a \emph{semiunit} morphism
$\C\rarrow\bS$ satisfying the conventional axioms.
 In particular, given a locally profinite group $G$ with
a compact open subgroup $H$, one can decompose the multiplication
map $G\times G\rarrow G$ into the composition $G\times G\rarrow
G\times_HG\rarrow G$, where $G\times_HG$ is the quotient of
$G\times G$ by the equivalence relation $(g'h,g'')\sim(g',hg'')$,
where $g'$, $g''\in G$ and $h\in H$.
 Then the pullback of compactly supported locally constant functions
with respect to the quotient map $G\times G\rarrow G\times_HG$
identifies $k(G\times_HG)$ with $\bS\oc_\C\bS\subset\bS\ot_k\bS$, and
the pushforward of such functions with respect to the multiplication
map $G\times_HG\rarrow G$ provides the semimultiplication morphism
$\bS\oc_\C\bS\rarrow\bS$.

\subsection{{}} \label{pseudo-and-contra-subsecn}
 The opposite category to the category of $k$\+vector spaces is
identified with the category of \emph{linearly compact} or
\emph{pseudo-compact} topological vector spaces.
 The identification is provided by the functor assigning to
a vector space $V$ is dual vector space $V\spcheck$; the inverse
functor assigns to a topological vector space $W$ the vector
space of all continuous linear functions $W\rarrow k$.
 Similarly, the opposite category to the category $\comodr\C$ of
right comodules over a coalgebra $\C$ is identified with
the category $\C\spcheck\pscomp$ of \emph{pseudo-compact left modules}
over the \emph{pseudo-compact algebra}~$\C\spcheck$.
 In particular, when $\C=k(H)$ for a profinite group $H$, one has
$$
 \C\spcheck\.=\.k[[H]]\.=\.\varprojlim\nolimits_{U\subset H} k[H/U],
$$
where the projective limit of the group algebras $k[H/U]$ is taken
over all the open normal subgroups $U\subset H$.

 One can also consider the conventional category of left modules
$\C\spcheck\modl$ over the algebra $\C\spcheck$ (viewed as
an abstract $k$\+algebra with the topology forgotten).
 Then there is the forgetful (or in other terms, the dualization)
functor
$$
 (\comodr\C)^\sop\simeq\C\spcheck\pscomp\lrarrow\C\spcheck\modl,
$$
which can be also constructed as the composition
$$
 (\comodr\C)^\sop=(\discrR\C\spcheck)^\sop
 \lrarrow(\modr\C\spcheck)^\sop\lrarrow\C\spcheck\modl
$$
of the fully faithful embedding of $\comodr\C$ as the full
subcategory of discrete right modules in $\modr\C\spcheck$ and
the dualization functor $N\longmapsto N\spcheck\:(\modr\C\spcheck)^\sop
\rarrow\C\spcheck\modl$.

 The category of left $\C$\+contramodules $\C\contra$ stands ``in
between'' the category of pseudo-compact left $\C\spcheck$\+modules
$\C\spcheck\pscomp$ and the category of abstract left
$\C\spcheck$\+modules $\C\spcheck\modl$.
 In other words, the forgetful functor $\C\spcheck\pscomp\rarrow
\C\spcheck\modl$ factorizes naturally into the composition
$$
 \C\spcheck\pscomp\lrarrow\C\contra\lrarrow\C\spcheck\modl.
$$
 All the three categories $\C\spcheck\pscomp$, \ $\C\contra$, and
$\C\spcheck\modl$ are abelian, and both the natural (forgetful)
functors between them are exact.

 A $\C$\+contramodule does not remember the topology of
a pseudo-compact $\C\spcheck$\+mod\-ule; still it carries
more structure than that of an abstract $\C\spcheck$\+module.
 This intermediate structure is that of \emph{infinite summation
operations} with zero-converging families of coefficients
in~$\C\spcheck$ \cite[Sections~1.1, 2.1, and~2.3]{Prev}.
 Sometimes (e.~g., when $\C\spcheck$ is a quotient algebra of
the algebra of formal power series in a finite set of variables),
this structure can be uniquely recovered from
the $\C\spcheck$\+module structure, so $\C\contra$ is
a full subcategory in $\C\spcheck\modl$ \cite[Remark~A.1.1]{Psemi},
\cite[Theorem~B.1.1]{Pweak}, \cite[Theorem~C.5.1]{Pcosh},
\cite[Sections~1.6 and~2.2]{Prev}.
 Generally speaking, it can not and it is~not.

\subsection{{}} \label{co-contra-subsecn}
 There are enough injective objects in the abelian category $\C\comodl$
and enough projective objects in the abelian category $\C\contra$.
 Moreover, the additive categories of injective left
$\C$\+comodules and projective left $\C$\+contramodules are
naturally equivalent~\cite[Section~1.2]{Prev},
\cite[Section~0.2.6]{Psemi}.
 Generally speaking, this allows to construct a natural triangulated
equivalence between the \emph{coderived category} of left
$\C$\+comodules and the \emph{contraderived category} of left
$\C$\+contramodules~\cite[Section~0.2.6]{Psemi},
\cite[Sections~5.1\+-5.2]{Pkoszul},
\begin{equation} \label{co-contra}
 \boR\Psi_\C\:\sD^\co(\C\comodl)\simeq\sD^\ctr(\C\contra):\!\boL\Phi_\C.
\end{equation}
 This equivalence of exotic derived categories can sometimes assign
an acyclic complex to an irreducible
co/contramodule~\cite[Section~0.2.7]{Psemi}.

 When the coalgebra $\C$ has finite homological dimension~$n$,
there is no difference between the co/contraderived and
the conventional derived categories, that is
$$
 \sD^\co(\C\comodl)=\sD(\C\comodl) \quad\text{and}\quad
 \sD^\ctr(\C\contra)=\sD(\C\contra)
$$
(see~\cite[Section~4.5]{Pkoszul} and~\cite[Remark~2.1]{Psemi}).
 Hence the triangulated equivalence
\begin{equation} \label{fhd-co-contra}
 \boR\Psi_\C\:\sD(\C\comodl)\simeq\sD(\C\contra):\!\boL\Phi_\C.
\end{equation}
 The left derived functor $\boL\Phi_\C$ takes a $\C$\+contramodule to
an (at most) $(n+1)$\+term complex of $\C$\+comodules, and
the right derived functor $\boR\Psi_\C$ takes a $\C$\+comodule to
an $(n+1)$\+term complex of $\C$\+contramodules.

\subsection{{}} \label{hom-V-subsecn}
 Given a $k$\+coalgebra $\C$ and a $\C$\+semialgebra $\bS$, for any
right $\bS$\+semimodule $\bN$ and a $k$\+vector space $V$
the vector space $\Hom_k(\bN,V)$ has a natural structure of
left \emph{$\bS$\+semicontramodule}~\cite[Sections~0.3.2
and~0.3.5]{Psemi}, \cite[Section~2.6]{Prev}.
 In particular, given a locally profinite group $G$ and a field~$k$,
for any smooth $G$\+module $\bN$ over~$k$ the vector space
$\Hom_k(\bN,V)$ has a natural structure of \emph{$G$\+contramodule}
over~$k$.

 The $G$\+contramodules over~$k$ are the same thing as
the $\bS$\+semicontramodules for the semialgebra $\bS=k(G)$ over
the coalgebra $\C=k(H)$.
 One has to be careful: the semialgebra $\bS$ depends on the choice of
a compact open subgroup $H\subset G$ (because the coalgebra $\C$ does),
but the notion of a $G$\+contramodule over~$k$ does not depend on this
choice~\cite[Section~E.1]{Psemi}, \cite[Example~2.6]{Prev}.
 One has to choose a compact open subgroup in $G$ if one wants to
interpret smooth $G$\+modules as semimodules and $G$\+contramodules
as semicontramodules over some semialgebra.

 In particular, the $H$\+contramodules over~$k$ are the same thing
as the $\C$\+contramod\-ules for the coalgebra $\C=k(H)$.

\subsection{{}}
 For any $\C$\+semialgebra $\bS$ (satisfying certain adjustness
conditions which always hold for semialgebras arising from
locally profinite groups), there is a natural triangulated equivalence
between certain exotic derived categories of the abelian categories of
left $\bS$\+semimodules and left $\bS$\+semicontramodules.
 These are called the \emph{semiderived categories} and denoted
by $\sD^\si(\bS\simodl)$ and $\sD^\si(\bS\sicntr)$; the natural
equivalence between them is called the \emph{derived
semimodule-semicontramodule correspondence}~\cite[Sections~0.3.7
and~6.3]{Psemi} (cf.~\cite[Section~3.5]{Prev}).

 The triangulated equivalence $\sD^\si(\bS\simodl)\simeq
\sD^\si(\bS\sicntr)$ forms a commutative diagram with the triangulated
equivalence~\eqref{co-contra} and the forgetful functors from
semi(contra)modules to co(ntra)modules,
\begin{equation} \label{semico-semicontra}
\begin{diagram}
\node{\boR\Psi_\bS\:\sD^\si(\bS\simodl)}\arrow{s}\arrow[2]{e,=}
\node[2]{\sD^\si(\bS\sicntr):\!\boL\Phi_\bS}\arrow{s} \\
\node{\boR\Psi_\C\:\sD^\co(\C\comodl)}\arrow[2]{e,=}
\node[2]{\sD^\ctr(\C\contra):\!\boL\Phi_\C}
\end{diagram}
\end{equation}

 The functor $\boR\Psi_\bS$ is the right derived functor of
the functor
$$
 \Psi_\bS\:\bS\simodl\lrarrow\bS\sicntr, \qquad
 \Psi_\bS(\bM)=\Hom_\bS(\bS,\bM)
$$
of homomorphisms in the category of left $\bS$\+semimodules.
 The functor $\boL\Phi_\bS$ is the left derived functor of
the functor
$$
 \Phi_\bS\:\bS\sicntr\lrarrow\bS\simodl,
$$
which is constructed using the operation of \emph{contratensor product}
of right $\bS$\+semimod\-ules and left $\bS$\+semicontramodules,
$$
 \Phi_\bS(\bP)=\bS\Ocn_\bS\bP.
$$
 The contratensor product operation is in some sense dual to
the functor $\Hom^\bS$ of homomorphisms in the category of left
$\bS$\+semicontramodules: for any right $\bS$\+semi\-module $\bN$,
left $\bS$\+semicontramodule $\bP$, and $k$\+vector space $V$
one has
$$
 \Hom_k(\bN\Ocn_\bS\bP,\>V)\simeq\Hom^\bS(\bP,\Hom_k(\bN,V)).
$$
 It follows that the functor $\Phi_\bS$ is left adjoint to
the functor~$\Psi_\bS$.

\subsection{{}}
 In particular, when the coalgebra $\C$ has finite homological
dimension~$n$, there is no difference between the semiderived and
the conventional derived categories of $\bS$\+semi(contra)modules,
$$
 \sD^\si(\bS\simodl)=\sD(\bS\simodl) \quad\text{and}\quad
 \sD^\si(\bS\sicntr)=\sD(\bS\sicntr).
$$
 Hence the commutative diagram of triangulated equivalences and
triangulated forgetful functors
\begin{equation} \label{fhd-semico-semicontra}
\begin{diagram}
\node{\boR\Psi_\bS\:\sD(\bS\simodl)}\arrow{s}\arrow[2]{e,=}
\node[2]{\sD(\bS\sicntr):\!\boL\Phi_\bS}\arrow{s} \\
\node{\boR\Psi_\C\:\sD(\C\comodl)}\arrow[2]{e,=}
\node[2]{\sD(\C\contra):\!\boL\Phi_\C}
\end{diagram}
\end{equation}
 The left derived functor $\boL\Phi_\bS$ takes an $\bS$\+semicontramodule
to an $(n+1)$\+term complex of $\bS$\+semimodules, and
the right derived functor $\boR\Psi_\bS$ takes an $\bS$\+semimodule to
an $(n+1)$\+term complex of $\bS$\+semicontramodules.

 Let us emphasize that no homological dimension condition on
the semialgebra $\bS$ is imposed here, but only on the coalgebra~$\C$.

\subsection{{}} \label{duality-adjunction-introd}
 Given a $\C$\+semialgebra $\bS$ and a $k$\+vector space $V$,
we compose the dualization functor from Section~\ref{hom-V-subsecn}
$$
 \Hom({-},V)\:\sD^\si(\simodr\bS)^\sop\lrarrow\sD^\si(\bS\sicntr)
$$
with the derived semimodule-semicontramodule correspondence
functor~\eqref{semico-semicontra}
$$
 \boL\Phi_\bS\:\sD^\si(\bS\sicntr)\lrarrow
 \sD^\si(\bS\simodl),
$$
obtaining a contravariant triangulated functor
\begin{equation} \label{composite-duality}
 \Delta_\bS^V\:\sD^\si(\simodr\bS)^\sop\lrarrow\sD^\si(\bS\simodl).
\end{equation}

 Similarly one constructs a triangulated functor in the opposite
direction
$$
 \Delta_{\bS^\rop}^V\:\sD^\si(\bS\simodl)^\sop
 \lrarrow\sD^\si(\simodr\bS).
$$
 Consider the simplest (and most interesting) case when $V=k$.
 In what sense and under what restrictions can one expect the two
functors $\Delta_\bS^k$ and $\Delta_{\bS^\rop}^k$ to be
mutually inverse?

 The answer is, first of all, that this \emph{cannot} be literally true
in this form.
 It is instructive to consider the trivial case when $\bS=\C=k$ is
the ground field.
 Then $\bS\simodl=\simodr\bS$ is the category of $k$\+vector spaces,
$\sD^\si(\bS\simodl)=\sD^\si(\simodr\bS)$ is the derived category
of $k$\+vector spaces, and the functor $\Delta_k^k$ takes a complex
of $k$\+vector spaces $U^\bu$ to the complex $U^\bu{}\spcheck=
\Hom_k(U^\bu,k)$.
 Obviously, unless all the cohomology spaces of the complex $U^\bu$
are finite-dimensional, $U^\bu$ is \emph{not} isomorphic
to~$U^\bu{}\spcheck{}\spcheck$.
 So some finiteness conditions have to be imposed at this point.

 What can one say without imposing any finiteness conditions?
 The answer is that the contravariant functors $\Delta_\bS^V$ and
$\Delta_{\bS^\rop}^V$ are \emph{right adjoint to each other}.
 In other words, for any two objects $\bM^\bu\in\sD^\si(\bS\simodl)$
and $\bN^\bu\in\sD^\si(\simodr\bS)$ there is a natural adjunction
isomorphism of $k$\+vector spaces of morphisms
$$
 \Hom_{\sD^\si(\bS\simodl)}(\bM^\bu,\Delta_\bS^V\bN^\bu)\simeq
 \Hom_{\sD^\si(\simodr\bS)}(\bN^\bu,\Delta_{\bS^\rop}^V\bM^\bu)
$$
induced by the natural adjunction morphisms
$\bM^\bu\rarrow\Delta_\bS^V\Delta_{\bS^\rop}^V(\bM^\bu)$ in
$\sD^\si(\bS\simodl)$ and
$\bN^\bu\rarrow\Delta_{\bS^\rop}^V\Delta_\bS^V(\bN^\bu)$ in
$\sD^\si(\simodr\bS)$.

\subsection{{}} \label{coalgebra-duality}
 A general discussion of finiteness and quasi-finiteness conditions
on coalgebras, comodules, and contramodules can be found in
the paper~\cite[Section~2]{Pmc}, the preprint~\cite[Section~2]{Ppc},
and the references therein.
 The most general and convenient condition for our purposes is that
the coalgebra $\C$ should be \emph{left} and
\emph{right quasi-cocoherent}~\cite{Ppc}.

 In this case, the full subcategories of \emph{quasi-finitely
copresented left $\C$\+comodules} $\C\comodl_\qfc\subset\C\comodl$,
quasi-finitely copresented right $\C$\+comodules $\comodrqfc\C\subset
\comodr\C$, and \emph{quasi-finitely presented left $\C$\+contramodules}
$\C\contra_\qfp\subset\C\contra$ are closed under the kernels,
cokernels, and extensions in $\C\comodl$, \,$\comodr\C$, and
$\C\contra$, respectively.
 So $\C\comodl_\qfc$, \,$\comodrqfc\C$, and $\C\contra_\qfp$ are
abelian categories.
 The functor assigning to a right $\C$\+comodule $\N$ its dual left
$\C$\+contramodule $\P=\N\spcheck=\Hom_k(\N,k)$ restricts to
an anti-equivalence of abelian categories
$$
 (\comodrqfc\C)^\sop\simeq\C\contra_\qfp,
$$
(so any quasi-finitely presented left $\C$\+contramodule acquires
a natural pseudo-compact left $\C\spcheck$\+module structure);
while the equivalence between the additive categories of injective
left $\C$\+comodules and projective left $\C$\+contramodules
$\C\comodl^\inj\simeq\C\contra^\proj$ restricts to an equivalence
between the categories of quasi-finitely cogenerated injective left
$\C$\+comodules and quasi-finitely generated projective left
$\C$\+contramodules, {\hbadness=1250
$$
 \C\comodl_\qfc^\inj\simeq\C\contra_\qfp^\proj.
$$

 Now} let us assume that the coalgebra $\C$ is not only left and right
quasi-cocoherent, but also has finite homological dimension.
 Then the derived equivalence~\eqref{fhd-co-contra} restricts to
an equivalence between the full subcategories $\sD^\b(\C\comodl_\qfc)
\subset\sD(\C\comodl)$ and $\sD^\b(\C\contra_\qfp)\subset
\sD(\C\contra)$.
 So one obtains triangulated equivalences
\begin{equation} \label{co-contra-duality}
 \sD^\b(\comodrqfc\C)^\sop\simeq\sD^\b(\C\contra_\qfp)
 \simeq\sD^\b(\C\comodl_\qfc),
\end{equation}
where the composition $\sD^\b(\comodrqfc\C)\rarrow
\sD^\b(\C\comodl_\qfc)$ is the restriction of the functor
$\Delta_\C^k$.
 Thus the restrictions of the adjoint functors $\Delta_\C^k$ and
$\Delta_{\C^\rop}^k$ to the full subcategories $\sD^\b(\comodrqfc\C)
\subset\sD(\comodr\C)$ and $\sD^\b(\C\comodl_\qfc)\subset\sD(\C\comodl)$
are mutually inverse anti-equivalences.

\subsection{{}} \label{semialgebra-duality}
 Finally, let $\bS$ be a semialgebra over a coalgebra~$\C$.
 Consider the full subcategories $\bS\simodl_{\C\dqfc}\subset\bS\simodl$
and $\simodr\bS_{\qfcd\C}\subset\simodr\bS$ consisting of all
the left/right $\bS$\+semimodules that are quasi-finitely copresented
\emph{as\/ $\C$\+comodules}.
 Similarly, consider the full subcategory $\bS\sicntr_{\C\dqfp}\subset
\bS\sicntr$ consisting of al the left $\bS$\+semicontramodules that are
quasi-finitely presented \emph{as\/ $\C$\+contramodules}.
 Then the functor assigning to a right $\bS$\+semimodule $\bN$ its
dual left $\bS$\+semicontramodule $\bP=\bN\spcheck$ restricts to
an anti-equivalence of categories
$$
 (\simodr\bS_{\qfcd\C})^\sop\simeq\bS\sicntr_{\C\dqfp}
$$
(so there is a natural pseudo-compact topology on any
$\bS$\+semicontramodule that is quasi-finitely presented as
a $\C$\+contramodule).
 The two equivalent categories here are abelian whenever the coalgebra
$\C$ is right quasi-cocoherent.

 Now suppose that the coalgebra $\C$ is left and right quasi-cocoherent
of finite homological dimension.
 Consider the full subcategories $\sD^\b_{\C\dqfc}(\bS\simodl)\subset
\sD(\bS\simodl)$ and $\sD^\b_{\qfcd\C}(\simodr\bS)\subset
\sD(\simodr\bS)$ in the derived categories of left and right
$\bS$\+semimodules formed by the bounded complexes of semimodules with
the \emph{cohomology semimodules} that are \emph{quasi-finitely
copresented as\/ $\C$\+comodules}, i.~e., the underlying $\C$\+comodules
of the cohomology $\bS$\+semimodules are quasi-finitely copresented.
 Similarly, consider the full subcategory
$\sD^\b_{\C\dqfp}(\bS\sicntr)\subset\sD(\bS\sicntr)$ in the derived
category of left $\bS$\+semicontramodules formed by the bounded
complexes of semicontramodules with the \emph{cohomology
semicontramodules} that are \emph{quasi-finitely presented as\/
$\C$\+contramodules}, i.~e., the underlying $\C$\+contramodules of
the cohomology $\bS$\+semicontramodules are quasi-finitely presented.

 Then it follows from the discussion in Section~\ref{coalgebra-duality}
that the derived equivalence in the upper line of
the diagram~\eqref{fhd-semico-semicontra} restricts to an equivalence
between the full subcategories $\sD^\b_{\C\dqfc}(\bS\simodl)\subset
\sD(\bS\simodl)$ and $\sD^\b_{\C\dqfp}(\bS\sicntr)\subset
\sD(\bS\sicntr)$.
 Hence one obtains a commutative diagram of triangulated equivalences
and triangulated forgetful functors
\begin{equation} \label{semico-semicontra-duality}
\begin{diagram}
\node{\sD^\b_{\qfcd\C}(\simodr\bS)^\sop}\arrow{s}\arrow[2]{e,=}
\node[2]{\sD^\b_{\C\dqfp}(\bS\sicntr)}\arrow{s}\arrow[2]{e,=}
\node[2]{\sD^\b_{\C\dqfc}(\bS\simodl)}\arrow{s} \\
\node{\sD^\b(\comodrqfc\C)^\sop}\arrow[2]{e,=}
\node[2]{\sD^\b(\C\contra_\qfp)}\arrow[2]{e,=}
\node[2]{\sD^\b(\C\comodl_\qfc)}
\end{diagram}
\end{equation}
where the composition $\sD^\b_{\qfcd\C}(\simodr\bS)\rarrow
\sD^\b_{\C\dqfc}(\bS\simodl)$ is the restriction of the functor
$\Delta_\bS^k$.
 In other words, the restrictions of the adjoint functors $\Delta_\bS^k$
and $\Delta_{\bS^\rop}^k$ to the full subcategories
$\sD^\b_{\qfcd\C}(\simodr\bS)\subset\sD(\simodr\bS)$ and
$\sD^\b_{\C\dqfc}(\bS\simodl)\subset\sD(\bS\simodl)$ are
mutually inverse anti-equivalences.

\subsection{{}}
 Specializing~\eqref{co-contra} to the case of the coalgebra $\C=k(H)$
for a profinite group $H$, we obtain a natural triangulated
equivalence between the coderived category of discrete $H$\+modules
and the contraderived category of $H$\+contramodules over~$k$,
\begin{equation} \label{H-co-contra}
 \boR\Psi_H\:\sD^\co(H\discr_k)\simeq\sD^\ctr(H\contra_k):\!\boL\Phi_H.
\end{equation}
 When the profinite group $H$ has finite $k$\+cohomological dimension
(that is, the homological dimension of the abelian category of
discrete $H$\+modules over~$k$ is finite), this reduces
to an equivalence between the conventional derived
categories~\eqref{fhd-co-contra}
\begin{equation} \label{fhd-H-co-contra}
 \boR\Psi_H\:\sD(H\discr_k)\simeq\sD(H\contra_k):\!\boL\Phi_H.
\end{equation}

 In the case of the semialgebra $\bS=k(G)$ over the coalgebra $\C=k(H)$
corresponding to a locally profinite group $G$ with a compact open
subgroup $H$, we obtain a natural triangulated equivalence between
the semiderived categories of the abelian categories of smooth
$G$\+modules and $G$\+contramodules over~$k$,
\begin{equation} \label{G-semico-semicontra}
 \boR_H\Psi_G\:\sD^\si_H(G\smooth_k)\simeq\sD^\si_H(G\contra_k)
 :\!\boL_H\Phi_G.
\end{equation}
 The subindices $H$ are necessary, because the semiderived categories
depend on the choice of a compact open subgroup $H\subset G$, even
though the abelian categories of smooth $G$\+modules and
$G$\+contramodules do not.
 Moreover, the triangulated equivalences~\eqref{H-co-contra}
and~\eqref{G-semico-semicontra} form a commutative diagram with
the forgetful functors (remembering only the action of $H$ and
forgetting the action of the rest of~$G$)~\eqref{semico-semicontra}
\begin{equation} \label{G-H-semico-semicontra-diagram}
\begin{diagram}
\node{\boR_H\Psi_G\:\sD^\si_H(G\smooth_k)}\arrow{s}\arrow[2]{e,=}
\node[2]{\sD^\si_H(G\contra_k):\!\boL_H\Phi_G}\arrow{s} \\
\node{\boR\Psi_H\:\sD^\co(H\discr_k)}\arrow[2]{e,=}
\node[2]{\sD^\ctr(H\contra_k):\!\boL\Phi_H}
\end{diagram}
\end{equation}

\subsection{{}} \label{fhd-subsecn}
 The situation simplifies considerably when a locally profinite group
$G$ has a compact open subgroup $H$ of finite $k$\+cohomological
dimension.
 Notice that every open subgroup $H'\subset H$ then also has finite
$k$\+cohomological dimension; so the group $G$ has a base of
neighborhoods of zero formed by compact open subgroups of finite
$k$\+cohomological dimension.

 Furthermore, the $H$\+semiderived categories of smooth $G$\+modules
and $G$\+contra\-modules over~$k$ coincide with the conventional
derived categories when $H$ has finite $k$\+cohomological dimension,
$$
 \sD^\si_H(G\smooth_k)=\sD(G\smooth_k) \quad\text{and}\quad
 \sD^\si_H(G\contra_k)=\sD(G\contra_k).
$$
 Moreover, the derived functors $\boR_H\Psi_G$ and $\boL_H\Phi_G$
do not depend on the choice of a compact open subgroup $H\subset G$
of finite $k$\+cohomological dimension.
 So we come to the commutative diagram of triangulated equivalences
and forgetful functors~\eqref{fhd-semico-semicontra}
\begin{equation} \label{fhd-G-H-semico-semicontra-diagram}
\begin{diagram}
\node{\boR\Psi_G\:\sD(G\smooth_k)}\arrow{s}\arrow[2]{e,=}
\node[2]{\sD(G\contra_k):\!\boL\Phi_G}\arrow{s} \\
\node{\boR\Psi_H\:\sD(H\discr_k)}\arrow[2]{e,=}
\node[2]{\sD(H\contra_k):\!\boL\Phi_H}
\end{diagram}
\end{equation}
where the upper line does not depend on the choice of~$H$.

 If the locally profinite group $G$ has a compact open subgroup $H$
of $k$\+cohomological dimension~$n$, then the homological dimension
of the functors $\boR\Psi_G$ and $\boL\Phi_G$ does not exceed~$n$.
 In other words, the functor $\boL\Phi_G$ takes a $G$\+contramodule
over~$k$ to a nonpositively cohomologically graded $(n+1)$\+term
complex of smooth $G$\+modules over~$k$, while the functor $\boR\Psi_G$
takes a smooth $G$\+module over~$k$ to a nonnegatively cohomologically
graded $(n+1)$\+term complex of $G$\+contramodules over~$k$.

 More generally, for any locally profinite group $G$ admitting
a compact open subgroup of finite $k$\+cohomological dimension,
and for any open subgroup $G'\subset G$, one has a commutative diagram
of triangulated equivalences and forgetful functors
\begin{equation} \label{fhd-G-G-prime-semico-semicontra}
\begin{diagram}
\node{\boR\Psi_G\:\sD(G\smooth_k)}\arrow{s}\arrow[2]{e,=}
\node[2]{\sD(G\contra_k):\!\boL\Phi_G}\arrow{s} \\
\node{\boR\Psi_{G'}\:\sD(G'\smooth_k)}\arrow[2]{e,=}
\node[2]{\sD(G'\contra_k):\!\boL\Phi_{G'}}
\end{diagram}
\end{equation}
where the triangulated forgetful functors are induced by the exact
forgetful functors between the abelian categories
$$
 G\smooth_k\lrarrow G'\smooth_k \quad\text{and}\quad
 G\contra_k\lrarrow G'\contra_k.
$$

\subsection{{}} \label{dedualizing-tilting}
 In fact, the situation considered in Section~\ref{fhd-subsecn} stands
in the intersection of several derived covariant duality theories.

 Let $k$ be a field and $G$ be a locally profinite group having
a compact open subgroup $H\subset G$ of finite
$k$\+cohomological dimension~$n$.
 Let $\bS=k(G)$ be the related semialgebra over the coalgebra
$\C=k(H)$.
 Then the $\bS$\+$\bS$\+bisemimodule $\bS$, viewed as a one-term
complex, is a dedualizing complex for the pair of semialgebras
$(\bS,\bS)$ \cite[Examples~4.2]{Pmc}, so~\cite[Theorem~4.3]{Pmc}
applies.

 Furthermore, in the same assumptions, the smooth $G$\+module
$\bS=k(G)$ is an $n$\+tilting object in the Grothendieck abelian
category $\sA=G\smooth_k$ in the sense of, e.~g.,
\cite[Section~6]{NSZ} and~\cite[Definition~2.1 and Theorem~2.2]{FMS}.
 The tilting heart $\sB$ is equivalent to the abelian category
of $G$\+contramodules $G\contra_k$.
 Either approach (see also Section~\ref{derived-equivalence-subsecn}
below) allows to construct the triangulated equivalences
\begin{equation}
 \sD^\star(G\smooth_k)\simeq\sD^\star(G\contra_k)
\end{equation}
for all the bounded or unbounded, conventional or absolute derived
categories with the symbols $\star=\b$, $+$, $-$, $\varnothing$,
$\abs+$, $\abs-$, or~$\abs$.
 We refer to the paper~\cite[Corollary~5.6 and Example~10.10]{PS} for
the details.

\subsection{{}} \label{admissibly-co-presented-subsecn}
 Let $G$ be a locally profinite group, $H\subset G$ be a compact open
subgroup, and $k$~be a field.
 As usually, we say that a smooth $G$\+module $\bM$ over~$k$ is
\emph{admissible} if, for every compact open subgroup $F\subset G$,
the subspace of $F$\+invariant elements $\bM^F\subset\bM$ is
finite-dimensional.
 Equivalently, this means that the underlying smooth $H$\+module
of the smooth $G$\+module $\bM$ is quasi-finitely cogenerated
as a $k(H)$\+comodule.
 Hence the latter property of a smooth $G$\+module $\bM$ over~$k$
does not depend on the choice of a compact open subgroup $H\subset G$.

 Dual-analogously, for any $G$\+contramodule $\bP$ over~$k$ and
a compact open subgroup $F\subset G$ we consider the maximal quotient
contramodule of $\bP$ on which the contraaction of $F$ is trivial,
and denote it by~$\bP_F$.
 This is the contramodule version of the space of coinvariants of $F$
in~$\bP$.
 We say that a $G$\+contramodule $\bP$ over~$k$ is
\emph{contraadmissible} if, for every compact open subgroup
$F\subset G$, the vector space $\bP_F$ is finite-dimensional.
 Equivalently, this means that the underlying $H$\+contramodule
of the $G$\+contramodule $\bP$ is quasi-finitely generated as
a $k(H)$\+contramodule.
 It follows that the latter property of a $G$\+contramodule $\bP$
over~$k$ does not depend on the choice of a compact open subgroup
$H\subset G$.

 A smooth $G$\+module $\bM$ over~$k$ is said to be \emph{admissibly
copresented} if its underlying smooth $H$\+module is quasi-finitely
copresented as a $k(H)$\+comodule, i.~e., in other words,
the underlying smooth $H$\+module of $\bM$ is the kernel of
a morphism between two admissible injective smooth $H$\+modules
over~$k$.
 One can check that this property of a smooth $G$\+module over~$k$
does not depend on the choice of a compact open subgroup $H\subset G$.
 We denote the full subcategory of admissibly copresented smooth
$G$\+modules by $k(G)\simodl_{k(H)\dqfc}=G\smooth_{k,\acp}\subset
G\smooth_k$.

 Similarly, a $G$\+contramodule $\bP$ over~$k$ is said to be
\emph{contraadmissibly presented} if its underlying $H$\+contramodule
is quasi-finitely presented as a $k(H)$\+contramodule, i.~e.,
the underlying $H$\+contramodule of $\bP$ is the cokernel of
a morphism between two contraadmissible projective $H$\+contramodules
over~$k$.
 Once again, one can check that this property of a $G$\+contramodule
over~$k$ does not depend on the choice of a compact open subgroup
$H\subset G$.
 We denote the full subcategory of contraadmissibly presented
$G$\+contramodules by $k(G)\sicntr_{k(H)\dqfp}=G\contra_{k,\caap}\subset
G\contra_k$.

 The functor assigning to a smooth $G$\+module $\bM$ over~$k$ its dual
$G$\+semicontramod\-ule $\bP=\bM\spcheck=\Hom_k(\bM,k)$ restricts to
an anti-equivalence of categories
$$
 (G\smooth_{k,\acp})^\sop\simeq G\contra_{k,\caap}.
$$
 The two equivalent categories here are abelian whenever the coalgebra
$\C=k(H)$ is quasi-cocoherent.
 The latter property does not depend on the choice of a compact open
subgroup $H$ in a given locally profinite group~$G$.

\subsection{{}} \label{loc-profinite-duality-subsecn}
 Let $G$ be a locally profinite group and $k$~be a field.
 Suppose that the coalgebra $\C=k(H)$ is quasi-cocoherent for some
(equivalently, for all) compact open subgroup $H\subset G$.
 Suppose further that there exists a compact open subgroup $H\subset G$
of finite $k$\+cohomological dimension.

 As in Section~\ref{semialgebra-duality}, we consider the full
subcategory $\sD^\b_\acp(G\smooth_k)\subset\sD(G\smooth_k)$ in
the derived category of smooth $G$\+modules over~$k$ formed by
the bounded complexes of smooth $G$\+modules \emph{with admissibly
copresented cohomology $G$\+modules}.
 Similarly, we consider the full subcategory $\sD^\b_\caap(G\contra_k)
\subset\sD(G\contra_k)$ in the derived category of $G$\+contramodules
over~$k$ formed by the bounded complexes of $G$\+contramodules
\emph{with contraadmissibly presented cohomology $G$\+contramodules}.
 Then the commutative diagram of triangulated equivalences and
triangulated forgetful functors~\eqref{semico-semicontra-duality}
takes {the form \hbadness=1875
\begin{equation} \label{loc-profinite-duality-diagram}
\begin{diagram}
\node{\sD^\b_\acp(G\smooth_k)^\sop}\arrow{s}\arrow[2]{e,=}
\node[2]{\sD^\b_\caap(G\contra_k)}\arrow{s}\arrow[2]{e,=}
\node[2]{\sD^\b_\acp(G\smooth_k)}\arrow{s} \\
\node{\sD^\b(H\discr_{k,\acp})^\sop}\arrow[2]{e,=}
\node[2]{\sD^\b(H\contra_{k,\caap})}\arrow[2]{e,=}
\node[2]{\sD^\b(H\discr_{k,\acp})}
\end{diagram}
\end{equation}
where $H\subset G$ is} any chosen compact open subgroup and
$H\discr_{k,\acp}$ is a notation for the abelian category of
admissibly copresented discrete/smooth $H$\+modules over~$k$.

 The composition $\sD^\b_\acp(G\smooth_k)^\sop\rarrow
\sD^\b_\acp(G\smooth_k)$ is an involutive auto-anti-equivalence
of the triangulated category $\sD^\b_\acp(G\smooth_k)$ which
can be obtained as the restriction of the functor $\Delta_\bS^k=
\Delta_G^k=\Delta_{\bS^\rop}^k$, where $\bS=k(G)\simeq\bS^\rop$, to
the full subcategory $\sD^\b_\acp(G\smooth_k)\subset\sD(G\smooth_k)$.
 
 Here the upper line of
the diagram~\eqref{loc-profinite-duality-diagram} does not depend on
the choice of a compact open subgroup $H\subset G$, because the upper
line of the diagram~\eqref{fhd-G-H-semico-semicontra-diagram} does
not depend on it.
 More generally, for any locally profinite group $G$ admitting
a compact open subgroup $H$ of finite $k$\+cohomological dimension
with a quasi-cocoherent coalgebra $k(H)$, and for any open subgroup
$G'\subset G$, we have a commutative diagram of triangulated
equivalences and forgetful functors
(cf.~\eqref{fhd-G-G-prime-semico-semicontra}) 
\begin{equation} \label{G-G-prime-duality-comparison}
\begin{diagram}
\node{\sD^\b_\acp(G\smooth_k)^\sop}\arrow{s}\arrow[2]{e,=}
\node[2]{\sD^\b_\caap(G\contra_k)}\arrow{s}\arrow[2]{e,=}
\node[2]{\sD^\b_\acp(G\smooth_k)}\arrow{s} \\
\node{\sD^\b_\acp(G'\smooth_k)^\sop}\arrow[2]{e,=}
\node[2]{\sD^\b_\caap(G'\contra_k)}\arrow[2]{e,=}
\node[2]{\sD^\b_\acp(G'\smooth_k)}
\end{diagram}
\end{equation}

\subsection{{}} \label{nonnatural-subsecn}
 Let $H$ be a profinite group of the proorder not divisible by
the characteristic of a field~$k$.
 (In particular, $H$ can be an arbitrary profinite group and $k$ a field
of characteristic~$0$.)
 Then the $k$\+cohomological dimension of $H$ is zero, $n=0$.
 In other words, the coalgebra $\C=k(H)$ is cosemisimple.

 Denote by $I_\alpha$ the irreducible discrete representations of $H$
over~$k$.
 Then an arbitrary smooth representation of $H$ over~$k$ has
the form
$$
 \M = \bigoplus\nolimits_\alpha V_\alpha\ot_k I_\alpha,
$$
where $V_\alpha$ are some $k$\+vector spaces.
 At the same time, an arbitrary $H$\+contramodule over~$k$
has the form~\cite[Lemma~A.2.2]{Psemi}
$$
 \P = \prod\nolimits_\alpha V_\alpha\ot_k I_\alpha.
$$
 The derived equivalence $\sD(H\discr_k)\simeq\sD(H\contra_k)$ from
the diagram~\eqref{fhd-G-H-semico-semicontra-diagram} is induced
by the equivalence of (semisimple) abelian categories
$$
 \Psi_H\:H\discr_k\.\simeq\. H\contra_k :\!\Phi_H
$$
taking the discrete $H$\+module $\bigoplus_\alpha V_\alpha\ot I_\alpha$
to the $H$\+contramodule $\prod_\alpha V_\alpha\ot I_\alpha$ and back.
 The functor $\Phi_H$ can be also described as taking
an $H$\+contramodule $\P$ over~$k$ to its $H$\+submodule $\M\subset\P$
formed by all the $H$\+discrete vectors (i.~e., vectors whose
stabilizers are open in~$H$).

 Furthermore, any cosemisimple coalgebra is left and right
co-Noetherian~\cite[Section~2]{Pmc}, hence quasi-co-Noetherian,
and therefore quasi-cocoherent~\cite[Section~2]{Ppc}.
 In the situation at hand, the full subcategory of all admissibly
copresented (\,$=$~admissible) discrete $H$\+modules
$H\discr_{k,\acp}\subset H\discr_k$ consists of all the representations
$\M=\bigoplus_\alpha V_\alpha\ot_k I_\alpha$ such that $V_\alpha$ is
a finite-dimensional $k$\+vector space for every~$\alpha$.
 Similarly, the full subcategory of all contraadmissibly presented
(\,$=$~contraadmissible) $H$\+contramodules
$H\contra_{k,\caap}\subset H\contra_k$ consists of
all the $H$\+contramodules $\P=\prod_\alpha V_\alpha\ot_k I_\alpha$
such that all the vector spaces $V_\alpha$ are finite-dimensional.

 Now let $G$ be a locally profinite group containing a compact
open subgroup $H\subset G$ of the proorder not divisible
by~$\operatorname{char}k$.
 Then the assertions of
Sections~\ref{fhd-subsecn}\+-\ref{dedualizing-tilting} are applicable.
 In particular, the smooth $G$\+module $\bS$ is a projective generator
of the abelian category $G\smooth_k$.
 Moreover, the derived equivalence $\sD(G\smooth_k)\simeq
\sD(G\contra_k)$ from
the diagram~\eqref{fhd-G-H-semico-semicontra-diagram} is induced
by an equivalence of abelian categories
$$
 \Psi_G\:G\smooth_k\.\simeq\. G\contra_k :\!\Phi_G.
$$
 The functor $\Phi_G$ takes a $G$\+contramodule $\bP$ over $k$ to
its submodule $\bM=\Phi_G(\bP)\subset\bP$ of all $G$\+smooth vectors.
 The functor $\Psi_G$ can be described by the rule
$$
 \bP=\Psi_G(\bM)=\Hom_{k[G]}(\bS,\bM),
$$
where $\bS=k(G)$ is the smooth $G$\+$G$\+bimodule of all compactly
supported locally constant functions $G\rarrow k$.

 Restricting the triangulated equivalences and triangulated forgetful
functors from the diagram~\eqref{loc-profinite-duality-diagram} to
the full subcategories of one-term complexes, one obtains a diagram of
abelian category equivalences and forgetful functors
\begin{equation} \label{nonnatural-duality}
\begin{diagram}
\node{G\smooth_{k,\acp}^\sop}\arrow{s}\arrow[2]{e,=}
\node[2]{G\contra_{k,\caap}}\arrow{s}\arrow[2]{e,=}
\node[2]{G\smooth_{k,\acp}}\arrow{s} \\
\node{H\discr_{k,\acp}^\sop}\arrow[2]{e,=}
\node[2]{H\contra_{k,\caap}}\arrow[2]{e,=}
\node[2]{H\discr_{k,\acp}}
\end{diagram}
\end{equation}

\subsection{{}} \label{natural-subsecn}
 Let $G$ be a $p$\+adic Lie group.
 If $\operatorname{char}k\ne p$ (the ``nonnatural characteristic''
case), then $G$ contains a compact open subgroup $H$ of the proorder
not divisible by $\operatorname{char}k$, so the assertions of
Section~\ref{nonnatural-subsecn} apply.

 In this paper, we are mostly interested in the case
$\operatorname{char}k=p$ (the ``natural characteristic'' case).
 According to~\cite[Definition~4.1, Theorem~4.5, and
Theorem~8.32]{DSMS} and~\cite[Corollaire~(1)]{Ser} (see
also~\cite[Theorem~3.1 and the discussion in the preceding
paragraph]{Koh}), $G$ contains an open subgroup $H$ that is
a pro-$p$-group of finite cohomological dimension.
 Hence the assertions of
Sections~\ref{fhd-subsecn}\+-\ref{dedualizing-tilting} apply to
$H$ and~$G$.

 Furthermore, the pro-$p$-group $H$ is finitely generated.
 As we show in this paper, it follows that the the forgetful functor
$$
 H\contra_k\.=\.\C\contra\lrarrow\C\spcheck\modl
$$
from the category of $\C$\+contramodules to the category of abstract
$\C\spcheck$\+modules is fully faithful.
 So is the forgetful functor
$$
 G\contra_k\lrarrow G\modl_k
$$
from the category of $G$\+contra\-modules over~$k$ to the category of
abstract $G$\+modules over~$k$.

 It also follows that the derived functor $\boL\Phi_G$ in
the diagram~\eqref{fhd-G-H-semico-semicontra-diagram} is simply
the left derived functor of the functor of tensor product
$$
 \Phi_G\:G\contra_k\lrarrow G\smooth_k, \qquad
 \Phi_G(\bP)=\bS\ot_{k[G]}\bP.
$$
 Alternatively, the functor $\Phi_G$ can be described as
$$
 \Phi_G(\bP)=\bS\ot_\fT\bP,
$$
where $\fT=\Hom_{k[G]}(\bS,\bS)^\rop$ is the opposite ring to the ring
of endomorphisms of the left $G$\+module $\bS$ over~$k$.
 The functor $\boL\Phi_G\:\sD(G\contra_k)\rarrow\sD(G\smooth_k)$
agrees with the conventional derived tensor product functor
$\bS\ot_\fT^\boL{-}$.

 Similarly, the derived functor $\boR\Psi_G$ is simply the right
derived functor of
$$
 \Psi_G\:G\smooth_k\lrarrow G\contra_k, \qquad
 \Psi_G(\bM)=\Hom_{k[G]}(\bS,\bM)=\Hom_{\fT^\rop}(\bS,\bM).
$$
 The functor $\boR\Psi_G\:\sD(G\smooth_k)\rarrow\sD(G\contra_k)$
agrees with the conventional $\boR\Hom_{\fT^\rop}(\bS,{-})$.

 Finally, the ring $k[[H]]=k(H)\spcheck$ is left and right
Noetherian~\cite[Corollary~7.25 and Exercise~7.6]{DSMS}, hence
the coalgebra $\C=k(H)$ is left and right Artinian and consequently
co-Noetherian~\cite[Section~2]{Pmc}.
 It follows that $\C$ is quasi-co-Noetherian and
quasi-cocoherent~\cite[Section~2]{Ppc}.
 A discrete $H$\+module is admissibly copresented if and only if it
is admissible, and if and only if the related $\C$\+comodule is
Artinian.
 An $H$\+contramodule is contraadmissibly presented if and only if it
is contraadmissible, and if and only if the related $\C$\+contramodule
is co-Artinian~\cite[Section~2]{Pmc}.

 Thus the right self-adjoint contravariant triangulated functor
$\Delta_G^k=\Delta_{k(G)}^k\:\allowbreak\sD(G\smooth_k)^\sop\rarrow
\sD(G\smooth_k)$ restricts to an involutive triangulated
auto-anti-equivalence of the full subcategory $\sD^\b_\acp(G\smooth_k)
\subset\sD(G\smooth_k)$ of bounded complexes of smooth $G$\+modules
with admissible cohomology $G$\+modules, and we have the commutative
diagrams~(\ref{loc-profinite-duality-diagram}\+-%
\ref{G-G-prime-duality-comparison}) of triangulated equivalences and
triangulated forgetful functors, as in
Section~\ref{loc-profinite-duality-subsecn}.

\subsection{{}}
 Before we finish this introduction, let us mention a result
demonstrating that the semiderived category $\sD^\si_H(G\smooth_k)$
is a natural triangulated category to assign to a locally profinite
group $G$ with a compact open pro-$p$ subgroup $H$ of a more general
kind than $p$\+adic Lie groups.
 The question becomes nontrivial when the pro-$p$-group $H$ has
infinite cohomological dimension (and $\operatorname{char}k=p$).

 For any coalgebra $\C$ over a field~$k$, the coderived category
of left $\C$\+comodules $\sD^\co(\C\comodl)$ is compactly generated
by its full subcategory of finite complexes of finite-dimensional
comodules.
 This full subcategory is equivalent to the bounded derived category
$\sD^\b(\C\comodl_\fd)$ of the abelian category of finite-dimensional
left $\C$\+comodules~\cite[Sections~3.11 and/or~4.6 and~5.5]{Pkoszul}.

 In particular, let $H$ be a profinite group and $k$~be a field.
 Denote by $I_\alpha$ the irreducible discrete representations of
$H$ over~$k$.
 Then the objects $I_\alpha\in H\discr_k\subset\sD^\co(H\discr_k)$
form a set of compact generators of the coderived category of
discrete representations $\sD^\co(H\discr_k)$.
 When $H$ is a pro-$p$-group and $\operatorname{char}k=p$, there is
a unique irreducible discrete representation of $H$ over~$k$,
namely, the trivial representation $I_0=k$.
 So $I_0$ is a single compact generator of the coderived category
$\sD^\co(H\discr_k)$.

 For a semialgebra $\bS$ over $\C$, one can consider the induction
functor
$$
 \operatorname{ind}_\C^\bS\:\C\comodl\lrarrow\bS\simodl,
 \qquad \operatorname{ind}_\C^\bS(\M)=\bS\oc_\C\M.
$$
 The functor $\operatorname{ind}_\C^\bS$ is left adjoint to
the forgetful functor $\bS\simodl\rarrow\C\comodl$.
 The functor $\operatorname{ind}_\C^\bS$ is exact, so it induces
a triangulated functor
$$
 \operatorname{ind}_\C^\bS\:\sD^\co(\C\comodl)\lrarrow
 \sD^\si(\bS\simodl).
$$
left adjoint to the forgetful functor $\sD^\si(\bS\simodl)\rarrow
\sD^\co(\C\comodl)$.
 The triangulated forgetful functor preserves coproducts, so
the triangulated induction functor takes compact objects to
compact objects.

 Denote by $I_\alpha$ the irreducible left $\C$\+comodules.
 Then the objects $\operatorname{ind}_\C^\bS(I_\alpha)\in
\bS\simodl\subset\sD^\si(\bS\simodl)$ are compact generators of
the triangulated category $\sD^\si(\bS\simodl)$.
 This follows from the fact that the triangulated forgetful functor
$\sD^\si(\bS\simodl)\rarrow\sD^\co(\C\comodl)$ is, by the definition
of the semiderived category, conservative, i.~e., it takes nonzero
objects to nonzero objects.

 Now let $G$ be a locally profinite group with a compact open
subgroup $H\subset G$.
 Specializing to the case $\C=k(H)$ and $\bS=k(G)$, we obtain
the functor of compactly supported smooth induction
$$
 \operatorname{ind}_H^G\:H\discr_k\lrarrow G\smooth_k
$$
left adjoint to the forgetful functor $G\smooth_k\rarrow H\discr_k$.
 Let $I_\alpha$ be the irreducible discrete representations of $H$
over~$k$.
 Then the objects $\operatorname{ind}_H^G(I_\alpha)\in
G\smooth_k\subset\sD^\si_H(G\smooth_k)$ are compact generators of
the semiderived category $\sD^\si_H(G\smooth_k)$.
 When $H$ is a pro-$p$-group and $\operatorname{char}k=p$,
the smooth $G$\+module $\operatorname{ind}_H^G(I_0)$ is a single
compact generator of the category $\sD^\si_H(G\smooth_k)$.

 Now we can define the Hecke DG\+algebra.
 The full subcategories of bounded below complexes in
$\sD(G\smooth_k)$, \ $\sD^\si_H(G\smooth_k)$, and
$\sD^\co(G\smooth_k)$ are all equivalent, so it does not matter
in (a DG\+enhancement of) which of these triangulated categories such
a DG\+algebra is computed.
 As in~\cite[Section~3]{Schn}, it suffices to pick a right injective
resolution $\J^\bu$ of the object $\operatorname{ind}_H^G(I_0)$ in
the abelian category $G\smooth_k$, and set
$A^\bu=\Hom_{k[G]}(\J^\bu,\J^\bu)^\rop$.

 The same arguments as in~\cite{Schn} provide a triangulated
equivalence
\begin{equation}
 \sD^\si_H(G\smooth_k)\simeq\sD(A^\bu\modl)
\end{equation}
between the $H$\+semiderived category of smooth $G$\+modules over~$k$
and the derived category of left DG\+modules over the Hecke
DG\+algebra~$A^\bu$.
 This is a generalization of the Schneider derived
equivalence~\cite[Theorem~9]{Schn} to the case of an arbitrary locally
profinite group $G$ with a compact open pro-$p$ subgroup $H\subset G$.

\subsection{{}}
 In conclusion, let us mention that many results of this paper
can be extended easily to the case of a discrete commutative coefficient
ring~$k$ of finite homological dimension (in place of a field).
 Indeed, the exposition in~\cite[Sections~E.1--E.3]{Psemi} is given
in precisely this generality.
 In order not to intimidate the reader, we prefer to work over a field
in this paper.
 For the same reason, we do not go into a discussion of the absolute
derived categories.

\bigskip

\noindent\textbf{Acknowlegdements.}
 This paper grew out of the author's participation in the workshop
``Geometric methods in the mod~$p$ local Langlands correspondence''
in Pisa on June~6--10, 2016.
 Listening to the talk of Jan Kohlhaase on smooth duality at
the workshop and subsequently reading his preprint~\cite{Koh} were
particularly strong influences.
 I~would like to thank the workshop organizers for inviting me,
and Centro di Ricerca Matematica Ennio De~Giorgi of Scuola Normale
Superiore di~Pisa for excellent working conditions at the workshop.
 Last but not least, I~am indebted to an anonymous reviewer for many
questions and suggestions which helped to improve and expand
the exposition and make it (hopefully) more accessible to
a wider audience.
 In particular, Sections~\ref{duality-adjunction-subsecn}
and~\ref{admissibility-secn}\+-\ref{involutive-secn} were written in
responce to a question asked by the reviewer.
 The author's research was supported by the Israel Science Foundation
grant~\#\,446/15, by the Grant Agency of the Czech Republic
under the grant P201/12/G028, and by research plan RVO:~67985840.

\Section{Discrete $H$-Modules and $H$-Contramodules}
\label{profinite-group-secn}

\subsection{Comodules and contramodules}
\label{comodules-contramodules-subsecn}
 Let $k$ be a field.
 We will denote the category of (discrete or abstract) $k$\+vector spaces
by $k\vect$.
 A complete, separated topological $k$\+vector space $W$ is said to be
\emph{pseudo-compact} if its open vector subspaces of finite codimension
form a base of neighborhoods of zero in it.
 In particular, a finite-dimensional topological vector space is
pseudo-compact if and only if it is discrete, and a pseudo-compact
topological vector space is discrete if and only if it is
finite-dimensional.
 The category of pseudo-compact topological vector spaces and continuous
linear maps between them will be denoted by $k\pscomp$.

 For any discrete $k$\+vector space $V$, the dual vector space
$V\spcheck=\Hom_k(V,k)$ carries a natural pseudo-compact topology in
which the annihilators of finite-dimensional subspaces in $V$ form
a base of neighborhoods of zero.
 Conversely, to any pseudo-compact $k$\+vector space $W$ one assigns
the discrete vector space $V$ of all continuous linear functions
$W\rarrow k$ (where the topology on~$k$ is discrete).
 This correspondence defines an anti-equivalence between the categories
of discrete and pseudo-compact $k$\+vector spaces,
\begin{equation} \label{linear-duality}
 (k\vect)^\sop\simeq k\pscomp.
\end{equation}

 The \emph{completed tensor product} of two pseudo-compact vector spaces
$W_1$ and $W_2$ is defined as the projective limit
$$
 W_1\wot_k W_2 = \varprojlim\nolimits_{U_1,U_2} W_1/U_1\ot_k W_2/U_2,
$$
where $U_1$ ranges over the open vector subspaces in $W_1$ and $U_2$
over the open vector subspaces in $W_2$.
 The $k$\+vector space $W_1\wot_k W_2$ is endowed with the topology
of projective limit of discrete finite-dimensional vector spaces
$W_1/U_1\ot_k W_2/U_2$, making $W_1\wot_k W_2$ a pseudo-compact
topological vector space.
 The anti-equivalence of categories~\eqref{linear-duality} takes
the tensor product of discrete $k$\+vector spaces to the completed
tensor product of pseudo-compact $k$\+vector spaces; so it is
an anti-equivalence of tensor categories.

 A (coassociative and counital) \emph{coalgebra} $\C$ over~$k$ is
$k$\+vector space endowed with a \emph{comultiplication} map
$\mu\:\C\rarrow\C\ot_k\C$ and a \emph{counit} map
$\varepsilon\:\C\rarrow k$ satisfying the conventional
coassociativity and counitality axioms.
 A \emph{left\/ $\C$\+comodule} $\M$ is a $k$\+vector space endowed
with a \emph{left coaction} map $\nu_\M\:\M\rarrow\C\ot_k\M$, and
a \emph{right\/ $\C$\+comodule} $\N$ is a $k$\+vector space endowed
with a \emph{right coaction} map $\nu_\N\:\N\rarrow\N\ot_k\C$.
 The coassociativity and counitality axioms have to be satisfied in
both cases.
 We refer to~\cite{Swe} or~\cite{Prev} for the details.

 The \emph{Sweedler notation}~\cite{Swe} for the comultiplication in
a coalgebra $\C$ has the form
$$
 \mu(c)=\sum\nolimits_i c^{(1)}_i\ot c^{(2)}_i=c^{(1)}\ot c^{(2)},
$$
where $c$, $c^{(1)}_i$, $c^{(2)}_i\in\C$, while $c^{(1)}\ot c^{(2)}$
is a simplified symbolic form of the notation.
 Similarly, the Sweedler notation for the left coaction map~$\nu_\M$
is
$$
 \nu_\M(x)=\sum\nolimits_i x^{(-1)}_i\ot x^{(0)}_i = x^{(-1)}\ot x^{(0)},
$$
where $x$, $x^{(0)}_i\in\M$ and $x^{(-1)}_i\in\C$; and the notation
for the right coaction map~$\nu_\N$ is
$$
 \nu_\N(y)=\sum\nolimits_i y^{(0)}_i\ot y^{(1)}_i = y^{(0)}\ot y^{(1)},
$$
where $y$, $y^{(0)}_i\in\N$ and $y^{(1)}_i\in\C$.
 In every one of these cases, the index~$i$ actually ranges over
a finite set; and this index is often omitted, together with
the summation sign over~$i$, for the sake of brevity.

 The anti-equivalence of tensor categories~\eqref{linear-duality}
transforms coalgebra objects in the category $k\vect$ into algebra
objects in the category $k\pscomp$ and back.
 In other words, for any coassociative, counital coalgebra $\C$
over~$k$, the dual pseudo-compact vector space $\C\spcheck$ acquires
an associative, unital \emph{pseudo-compact algebra} structure given by
a pseudo-compact multiplication map $m=\mu\spcheck\:
\C\spcheck\wot_k\C\spcheck\rarrow\C\spcheck$ and a unit map
$e=\varepsilon\spcheck\:k\rarrow\C\spcheck$.
 The datum of a map~$m$ is equivalent to the datum of a conventional
multiplication $\C\spcheck\times\C\spcheck\rarrow\C\spcheck$ that is
continuous as a function of two variables in the pseudo-compact topology,
$k$\+linear, associative, and unital with the unit element $e(1)\in\C$.
 In other words, the pseudo-compact algebra $\C\spcheck$ has
an underlying structure of a discrete or nontopological associative
algebra (as well as an underlying structure of a pseudo-compact
vector space).

 There still remains a choice between the two opposite multiplications
on the pseudo-compact vector space $\C\spcheck$: which one of the two
opposite pseudo-compact algebras is to be denoted by $\C\spcheck$, and
which one by $\C\spcheck{}^\rop$?
 (See the discussion in~\cite[Section~1.4]{Prev}.)
 As in~\cite{Prev}, we make the slightly nonstandard choice of
defining the multiplication in $\C\spcheck$ by the formula
$$
 \langle fg, c \rangle =
 \langle f,c^{(2)}\rangle \langle g,c^{(1)}\rangle,
 \qquad f,g \in\C\spcheck, \ c\in\C,
$$
where $\langle\,,\,\rangle$ denotes the natural pairing $\C\spcheck\times
\C\rarrow k$.

 Then, for any coalgebra $\C$ over~$k$, the anti-equivalence of tensor
categories~\eqref{linear-duality} transforms left $\C$\+comodules $\M$
into pseudo-compact right $\C\spcheck$\+modules $\M\spcheck$ and right
$\C$\+comodules $\N$ into pseudo-compact left
$\C\spcheck$\+modules~$\N\spcheck$.
 So the anti-equivalence of tensor categories~\eqref{linear-duality}
induces an anti-equivalence between the categories of right
$\C$\+comodules and pseudo-compact left $\C\spcheck$\+modules,
\begin{equation} \label{linear-duality-over-C}
 (\comodr\C)^\sop\simeq\C\spcheck\pscomp,
\end{equation}
and similarly on the other side.
 What is important for us, though, is that the right $\C\spcheck$\+module
$\M\spcheck$ can be obtained by applying the duality functor $\Hom_k({-},k)$
to a natural left $\C\spcheck$\+module structure on~$\M$.
 The natural left $\C\spcheck$\+module structure on a left $\C$\+comodule
$\M$ is defined by the composion
$$
 \C\spcheck\ot_k\M\lrarrow\C\spcheck\ot_k\C\ot_k\M\lrarrow\M
$$
of the map $\C\spcheck\ot_k\M\rarrow\C\spcheck\ot_k\C\ot_k\M$ induced by
the coaction map $\M\rarrow\C\ot_k\M$ and the map $\C\spcheck\ot_k
\C\ot_k\M\rarrow\M$ induced by the pairing $\C\spcheck\times\C\rarrow k$.
 The natural right $\C\spcheck$\+module structure on a right
$\C$\+comodule $\N$ is constructed similarly.

 So we have two functors
\begin{gather}
 \C\comodl\lrarrow\C\spcheck\modl, \\
 \comodr\C\lrarrow\modr\C\spcheck.
 \label{right-co-forgetful}
\end{gather}
 These functors are always fully faithful, and identify the category of
(left or right) $\C$\+comodules with the category of \emph{discrete}
(left or right) $\C\spcheck$\+modules with respect to the pseudo-compact
topology on $\C\spcheck$, i.~e., the $\C\spcheck$\+modules every vector
in which has an open annihilator in~$\C\spcheck$ \cite[Section~2.1]{Swe}.

 A \emph{left\/ $\C$\+contramodule} $\P$ is a $k$\+vector space 
endowed with a \emph{left contraaction} map $\pi_\P\:\Hom_k(\C,\P)
\rarrow\P$.
 The map $\pi_\P$ must satisfy the \emph{contraassociativity} and
\emph{contraunitality} axioms: namely, the two maps
$\Hom_k(\C,\Hom_k(\C,\P))=\Hom_k(\C\ot_k\C,\>\P)\rightrightarrows
\Hom_k(\C,\P)$, one of them induced by the comultiplication
map~$\mu$ and the other one by the contraaction map~$\pi_\P$,
should have equal compositions with the contraaction map~$\pi_\P$,
$$
 \Hom_k(\C,\Hom_k(\C,\P))\.=\.\Hom_k(\C\ot_k\C,\>\P)
 \.\rightrightarrows\. \Hom_k(\C,\P)\rarrow\P,
$$
and the composition of the map $\P\rarrow\Hom_k(\C,\P)$ induced by
the counit map~$\varepsilon$ with the contraaction map~$\pi_\P$
should be equal to the identity map~$\operatorname{id}_\P$,
$$
 \P\lrarrow\Hom_k(\C,\P)\lrarrow\P.
$$
 Here the identification $\Hom_k(V,\.\Hom_k(U,W))\simeq
\Hom_k(U\ot_kV,\>W)$ is presumed in the contraassociativity axiom
(using $\Hom_k(U,\.\Hom_k(V,W))\simeq\Hom_k(U\ot_kV,\>W)$ would lead
to the definition of a \emph{right $\C$\+contramodule}).

 Given a right $\C$\+comodule $\N$ and a $k$\+vector space $V$,
the vector space $\P=\Hom_k(\N,V)$ has a natural left
$\C$\+contramodule structure with the contraaction map
$\pi_\P\:\Hom_k(\C,\P)\rarrow\P$ constructed as the composition
\begin{equation} \label{right-co-left-contra}
 \Hom_k(\C,\Hom_k(\N,V))\.\simeq\.\Hom_k(\N\ot_k\C,\>V)
 \lrarrow\Hom_k(\N,V)
\end{equation}
of the natural isomorphism of Hom spaces and the map induced
by the right coaction map $\nu_\N\:\N\rarrow\N\ot_k\C$.

 The category of left $\C$\+comodules $\C\comodl$ is abelian with
exact functors of infinite direct sum and an injective
cogenerator $\C$, while the category of left $\C$\+contramodules
$\C\contra$ is abelian with exact functors of infinite product and
a projective generator~$\C\spcheck$.
 More generally, a \emph{cofree} left $\C$\+comodule is
a $\C$\+comodule of the form $\C\ot_kV$, and a \emph{free}
left $\C$\+contramodule is a $\C$\+contramodule of the form
$\Hom_k(\C,V)$, where $V$ is a $k$\+vector space.
 For any left $\C$\+comodule $\M$, left $\C$\+comodule morphisms
$\M\rarrow\C\ot_kV$ correspond bijectively to $k$\+linear maps
$\M\rarrow V$; and similarly, for any left $\C$\+contramodule $\P$,
left $\C$\+contramodule morphisms $\Hom_k(\C,V)\rarrow\P$ correspond
bijectively to $k$\+linear maps $V\rarrow\P$.
 The injective objects of $\C\comodl$ are precisely the direct
summands of the cofree left $\C$\+comodules, and the projective
objects of $\C\contra$ are precisely the direct summands of
free left $\C$\+contramodules~\cite[Section~1.2]{Prev}.

 Every left $\C$\+contramodule $\P$ has a natural underlying structure
of a left $\C\spcheck$\+module.
 The left action of $\C\spcheck$ in $\P$ is given by the composition
$$
 \C\spcheck\ot_k\P\lrarrow\Hom_k(\C,\P)\lrarrow\P
$$
of the natural inclusion $\C\spcheck\ot_k\P\hookrightarrow\Hom_k(\C,\P)$
(whose image is the vector subspace of all linear operators $\C\rarrow\P$
of finite rank in the vector space of arbitrary such operators) and
the left contraaction map $\Hom_k(\C,\P)\rarrow\P$.
 So there is a faithful forgetful functor
\begin{equation} \label{contra-forgetful}
 \C\contra\lrarrow\C\spcheck\modl.
\end{equation}

 Following the above discussion, for any right $\C$\+comodule $\N$
the vector space $\N\spcheck=\Hom_k(\N,k)$ has a natural left
$\C$\+contramodule structure.
 On the other hand, $\N\spcheck$ has a natural pseudo-compact topology
and a structure of a pseudo-compact left $\C\spcheck$\+module.
 Moreover, all the pseudo-compact left $\C\spcheck$\+modules come from
right $\C$\+comodules $\N$ in this way
(see~\eqref{linear-duality-over-C}).
 Thus the functor of forgetting the pseudo-compact topology of
a pseudo-compact left $\C\spcheck$\+module (assigning to
a pseudo-compact left $\C\spcheck$\+module its underlying nontopological
left $\C\spcheck$\+module) factorizes naturally through the category of
left $\C$\+contramodules,
\begin{equation} \label{two-forgetful-functors}
 \C\spcheck\pscomp\lrarrow\C\contra\lrarrow\C\spcheck\modl.
\end{equation}

 Let us say a few words about what the functor $\C\spcheck\pscomp\rarrow
\C\contra$ does.
 It forgets the topology of a pseudo-compact $\C\spcheck$\+module while
keeping a remnant of it in the form of the \emph{infinite summation
operations} with zero-convergent families of elements of $\C\spcheck$ as
the coefficients.
 We refer to~\cite[Sections~1.3, 2.1, and~2.3]{Prev} for a discussion of
the contramodule infinite summation operations.
 Subsequently, the functor $\C\contra\rarrow\C\spcheck\modl$ forgets
the infinite summation operations, too, keeping only the conventional
action of the $k$\+algebra $\C\spcheck$ in a module over it.
 (See also the discussion in Section~\ref{pseudo-and-contra-subsecn}
of the introduction.)

 Both the functors in~\eqref{two-forgetful-functors} are faithful, but
neither one of them is full, generally speaking.
 In other words, the action of these functors on morphisms is injective,
but not, in general, surjective.
 Furthermore, neither one of the two functors is surjective on
the isomorphism classes of objects (not even of objects
finite-dimensional over~$k$).
 We refer to~\cite[Section~A.1.2]{Psemi} for a detailed discussion.

 The following trivial example is helpful to keep in mind: when $\C=k$
is the trivial coalgebra, one has $\C\spcheck\pscomp=k\pscomp$ and
$\C\contra=k\vect=\C\spcheck\modl$.
 So the functor $\C\contra\rarrow\C\spcheck\modl$ is an equivalence of
categories in this case, but the functor $\C\spcheck\pscomp\rarrow
\C\contra$ is very far from being an equivalence.
 More generally, the functor $\C\spcheck\pscomp\rarrow\C\contra$ is
\emph{never} an equivalence of categories for a nonzero coalgebra $\C$,
while the functor $\C\contra\rarrow\C\spcheck\modl$ is an equivalence
for any finite-dimensional coalgebra $\C$, as well as in some more
interesting cases covered by
Theorem~\ref{fg-coalgebra-contra-fully-faithful} below.

\subsection{The full-and-faithfulness theorem}
\label{full-and-faithful-subsecn}
 A coalgebra~$\C$ is called \emph{conilpotent} if it has a unique
irreducible (say, left) comodule, whose dimension over~$k$ is
equal to~$1$.
 Alternatively, a (coassociative) coalgebra without counit $\D$ is
called conilpotent if for every element $x\in\D$ there exists
an integer $m\ge0$ such that $x$ is annihilated by the iterated
comultiplication map $\D\rarrow \D^{\ot m+1}$.
 A \emph{coaugmented} coalgebra $\C$ is a (coassociative and
counital) coalgebra endowed with a morphism of coalgebras
(the coaugmentation) $\gamma\:k\rarrow\C$.
 A coaugmented coalgebra $\C$ is called conilpotent if the coalgebra
without counit $\C_+=\C/\gamma(k)$ is conilpotent~\cite{PV}.
 This definition is equivalent to the previous
one~\cite[Section~9.1]{Swe}.
 A conilpotent (counital) coalgebra $\C$ always has a unique
coaugmentation.

 The \emph{cohomology algebra} $H^*(\C)$ of a coaugmented coalgebra
$\C$ can be defined as the Yoneda Ext-algebra $\Ext^*_\C(k,k)$
computed in the abelian category of left $\C$\+comodules $\C\comodl$.
 More explicitly, one has, in particular, $H^1(\C)=\ker(\C_+\to
\C_+\ot_k\C_+)$; this vector space can be interpreted as
the \emph{space of cogenerators} of the conilpotent coalgebra~$\C$.
 We refer to~\cite{PV}, \cite{Pbogom}, and~\cite[Section~5]{Pqf}
for further details.

\begin{thm} \label{fg-coalgebra-contra-fully-faithful}
 Let\/ $\C$ be a conilpotent coalgebra such that the $k$\+vector space
$H^1(\C)$ is finite-dimensional.
 Then the forgetful functor\/ $\C\contra\rarrow\C\spcheck\modl$
is fully faithful.

 Moreover, for any dense subring $R$ in the topological ring\/
$\C\spcheck$, the forgetful functor\/ $\C\contra\rarrow R\modl$
is fully faithful.
\end{thm}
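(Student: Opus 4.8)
The plan is to prove the second, stronger assertion, since taking $R=\C\spcheck$ (which is dense in itself) recovers the first; so let $R\subseteq\C\spcheck$ be dense and let $f\:\P\rarrow\Q$ be an $R$\+linear map between left $\C$\+contramodules. The forgetful functor is faithful for free (a contramodule morphism is determined by its underlying $k$\+linear map), so it remains to prove fullness, i.e.\ that $f$ commutes with the contraactions, $f\circ\pi_\P=\pi_\Q\circ\Hom_k(\C,f)$. First I would record the structural consequences of the hypotheses. Conilpotence provides the coradical filtration $k=\C_0\subset\C_1\subset\dotsb$ with $\C=\bigcup_m\C_m$ and $\C_1/\C_0=H^1(\C)$; finite\+dimensionality of $H^1(\C)$ then forces every $\C_m$ to be finite\+dimensional, so $\C\spcheck$ is the projective limit of the finite\+dimensional algebras $\Hom_k(\C_m,k)$. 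Thus $\C\spcheck$ is a complete, separated topological algebra whose maximal ideal $\mathfrak m$ (the functionals vanishing at the coaugmentation~$\gamma$) is open, satisfies $\bigcap_m\mathfrak m^m=0$, and is topologically generated by $n=\dim_kH^1(\C)$ elements $t_1,\dotsc,t_n$ lifting a basis of $\mathfrak m/\mathfrak m^2\cong H^1(\C)\spcheck$; equivalently $\C\spcheck$ is a quotient of the noncommutative power series ring $k\langle\langle x_1,\dotsc,x_n\rangle\rangle$.

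Next I would expand the contraaction along these generators. Contraunitality and contraassociativity yield, for every $\phi\in\Hom_k(\C,\P)$, a decomposition
$$
 \pi_\P(\phi)=\phi(\gamma(1))+\sum_{i=1}^n t_i\,\pi_\P(\phi_i),
$$
where the operators $\phi\mapsto\phi_i$ on $\Hom_k(\C,\P)$ are built solely from the comultiplication of $\C$ and the choice of the $t_i$, hence are natural in $\P$ and commute with $\Hom_k(\C,f)$. Iterating this identity $m$ times expresses $\pi_\P(\phi)$ as a finite sum $\sum_{|w|<m}t_w\,\phi_w(\gamma(1))$ of module\+theoretic terms (indexed by words $w$ in $1,\dotsc,n$) plus a remainder $\sum_{|w|=m}t_w\,\pi_\P(\phi_w)$. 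Here the finiteness of $n$ is decisive: there are only $n^m$ words of length~$m$, so the remainder is a \emph{finite} sum with coefficients $t_w\in\mathfrak m^m$, and therefore lies in the abstract module submodule $\mathfrak m^m\P$. This is exactly the point that breaks down when $H^1(\C)$ is infinite\+dimensional, where the tail is only a contramodule\+theoretic, not a module\+theoretic, combination.

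The second ingredient is separatedness: every $\C$\+contramodule is $\mathfrak m$\+adically separated, $\bigcap_m\mathfrak m^m\P=0$. I would deduce this from the contramodule Nakayama lemma for conilpotent coalgebras~\cite{Prev}, using finite generation to identify the iterated images of the contraaction of $\mathfrak m$ with the module powers $\mathfrak m^m\P$. Granting separatedness, the conclusion follows at once for $R=\C\spcheck$: applying $f$ to the $m$\+fold expansion, using naturality of $\phi\mapsto\phi_w$ together with $\C\spcheck$\+linearity of~$f$, both $f(\pi_\P(\phi))$ and $\pi_\Q(\Hom_k(\C,f)\phi)$ reduce modulo $\mathfrak m^m\Q$ to the same finite module expression $\sum_{|w|<m}t_w\,(f\circ\phi)_w(\gamma(1))$; hence their difference lies in $\bigcap_m\mathfrak m^m\Q=0$.

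Finally, for a general dense $R$ I would eliminate the use of the full coefficients $t_w$. Density of $R$ together with openness of the ideals $\mathfrak m^j$ shows that $R\cap\mathfrak m^j$ is dense in $\mathfrak m^j$; so for each fixed $m$ I may replace the finitely many coefficients $t_w$ with $|w|<m$ by elements $\rho_w\in R\cap\mathfrak m^{|w|}$ satisfying $t_w-\rho_w\in\mathfrak m^m$. This alters the partial sum only by an element of $\mathfrak m^m\P$, so modulo $\mathfrak m^m$ the contraaction is given by the $R$\+linear expression $\sum_{|w|<m}\rho_w\,\phi_w(\gamma(1))$; an $R$\+linear $f$ then intertwines the two sides modulo $\mathfrak m^m\Q$, and separatedness again forces equality. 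The main obstacle, and the place where the hypothesis $\dim_kH^1(\C)<\infty$ is genuinely used, is this second step: ensuring that the tail of the contraaction expansion lands in the abstract module filtration $\mathfrak m^m\P$ rather than only in the contramodule\+theoretic one. The separatedness input and the density bookkeeping are comparatively routine by contrast.
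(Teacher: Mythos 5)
Your reduction of the contraaction to a finite ``Taylor expansion'' with remainder in $\fI^m\P$ (where $\fI\subset\C\spcheck$ is the augmentation ideal) is sound, and it is indeed the place where finite-dimensionality of $H^1(\C)$ enters; the paper establishes essentially the same identity in the one-step form $\fL^+=\mathrm{im}(U\spcheck\ot_k\fL\to\fL)$ for a finite-dimensional $U\spcheck\subset\fI$. The genuine gap is your second ingredient: the claim that every $\C$\+contramodule is $\fI$\+adically separated, $\bigcap_m\fI^m\Q=0$, is \emph{false}, already for $\C\spcheck=k[[t]]$ (a conilpotent coalgebra with $\dim_kH^1(\C)=1$). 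The standard counterexample: take the free contramodule $E=k[[t]][[\{z,y_1,y_2,\dots\}]]$ and the subcontramodule $F\subset E$ generated by the elements $z-t^ny_n$; in $\Q=E/F$ the image of $z$ lies in $\bigcap_nt^n\Q$ but is nonzero (comparing coefficients shows $z\notin F$). This non-separatedness is one of the basic pathologies of contramodule categories, discussed at length in \cite[Sections~1.3 and~1.5]{Prev} and \cite[Section~A.1.1]{Psemi}. Nor can separatedness be ``deduced from the contramodule Nakayama lemma'': the content of that lemma is precisely that $\fL^+\ne\fL$ for $\fL\ne0$ \emph{without} any separatedness hypothesis, and it does not bound $\bigcap_m\fI^m\fL$. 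So your final step, concluding that the difference of the two sides lies in $\bigcap_m\fI^m\Q=0$, does not go through.

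The paper's proof is arranged so that only \emph{one} step of the filtration is ever needed, which is how it sidesteps separatedness. One first reduces to the case where $\P=\Hom_k(\C,V)$ is free and where the restriction of $f$ to the generators $V\subset\P$ vanishes (by subtracting the contramodule morphism $f'$ that this restriction induces), and then replaces $\Q$ by the subcontramodule generated by $f(\P)$. Your expansion, in its one-step form, shows that any $R$\+linear map satisfies $f(\P^+)\subset\Q^+$; since $\P/\P^+=V$ and $f$ kills $V$, one gets $f(\P)\subset\Q^+$, hence $\Q=\Q^+$ by the minimality of $\Q$, hence $\Q=0$ by a single application of the Nakayama lemma. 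If you want to salvage your argument, this reduction (free source, vanishing on generators, minimal target) is the missing idea that replaces the false separatedness claim; the density bookkeeping for general $R$ then works as you describe, with $U\spcheck$ chosen inside $R\cap\fI$.
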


\begin{proof}
 Given two left $\C$\+contramodules $\P$ and $\Q$ and a left
$R$\+module morphism $f\:\P\rarrow\Q$, we have to show that $f$~is
a $\C$\+contramodule morphism.
 Composing $f$ with the contraaction morphism $\Hom_k(\C,\P)
\rarrow\P$ and replacing $\P$ with $\Hom_k(\C,\P)$, we can assume
that $\P$ is a free left $\C$\+contramodule, $\P=\Hom_k(\C,V)$
for some $k$\+vector space~$V$.
 Then the composition
\begin{equation} \label{restrict-to-generators}
 V\lrarrow\Hom_k(\C,V)\overset{f}\lrarrow\Q
\end{equation}
of the $k$\+linear map $V\rarrow\Hom_k(\C,V)$ induced by the counit
$\varepsilon\:\C\rarrow k$ with the $R$\+module morphism
$f\:\P=\Hom_k(\C,V)\rarrow\Q$ extends uniquely to a left
$\C$\+contramodule morphism $f'\:\Hom_k(\C,V)\rarrow\Q$.
 Replacing $f$ with $f-f'$, we can assume that
the composition~\eqref{restrict-to-generators} vanishes.
 Then we have to show that $f=0$.
 Furthermore, replacing $\Q$ with its $\C$\+subcontramodule
generated by $f(\P)$, we can assume that $\Q$ has no
proper subcontramodules containing~$f(\P)$.

 For any left $\C$\+contramodule $\fL$, denote by $\fL^+\subset\fL$
the image of the composition of maps $\Hom_k(\C_+,\fL)\rarrow
\Hom_k(\C,\fL)\rarrow\fL$.
 Then $\fL^+$ is a $\C$\+subcontramodule in $\fL$ and $\fL/\fL^+$ is
the maximal quotient contramodule of $\fL$ with a trivial
$\C$\+contramodule structure (the latter notion being defined in terms
of the coaugmentation of~$\C$).
 According to the contramodule Nakayama
lemma~\cite[Lemma~A.2.1]{Psemi} (cf.~\cite[Lemma~2.1]{Prev},
\cite[Lemma~1.3.1]{Pweak}, \cite[Lemma~D.1.2]{Pcosh}),
\,$\fL/\fL^+\ne0$ whenever $\fL\ne0$.

 Let us first discuss the case when $R=\C\spcheck$.
 Let $\C_+\rarrow U$ be a $k$\+linear map from $\C_+$ to
a finite-dimensional $k$\+vector space $U$ such that the composition
$H^1(\C)\rarrow\C_+\rarrow U$ is injective.
 Since $\C$ is conilpotent, the composition $\C_+\rarrow\C\ot_k\C_+
\rarrow\C\ot_k U$ is then also an injective left $\C$\+comodule
morphism~\cite[Lemma~5.1]{Pqf}.
 Hence the induced map $\Hom_k(\C\ot_k U,\>\fL)\rarrow
\Hom_k(\C_+,\fL)$ is surjective, and it follows from
the contraassociativity axiom that $\fL^+\subset\fL$ is the image
of the composition of the contraaction maps
$\Hom_k(U,\Hom_k(\C,\fL))\rarrow\Hom_k(U,\fL)\rarrow\fL$.
 Thus $\fL^+$ is the image of the contraaction map
$\Hom_k(U,\fL)\rarrow\fL$.

 The vector space $U$ being finite-dimensional, we have
$\Hom_k(U,\fL)\simeq U\spcheck\ot_k\fL$.
 The map $U\spcheck\ot_k\fL\rarrow\fL$ can be obtained as
the composition $U\spcheck\ot_k\fL\rarrow\C\spcheck\ot_k\fL
\rarrow\fL$ of the map induced by the natural linear map
$U\spcheck\rarrow\C\spcheck$ and the $\C\spcheck$\+action map.
 It follows that any left $\C\spcheck$\+module morphism $\P\rarrow\Q$
between two left $\C$\+contramodules $\P$ and $\Q$ takes
$\P^+\subset\P$ into $\Q^+\subset\Q$.
 We have shown that $f(\P^+)\subset\Q^+$.

 Now we have $\P=\Hom_k(\C,V)$, hence $\P/\P^+=V$.
 By our assumption, the induced map $\P/\P^+\rarrow\Q/\Q^+$ vanishes.
 Therefore, $f(\P)\subset\Q^+$.
 By another assumption of ours, it follows that $\Q^+=\Q$.
 Applying the contramodule Nakayama lemma, we can conclude that $\Q=0$.

 More generally, let $R$ be a dense subring in the pseudo-compact
algebra~$\C\spcheck$.
 The discrete $k$\+vector space $H^1(\C)$ is dual to the pseudo-compact
quotient $k$\+vector space of the augmentation ideal
$\fI=\C_+\spcheck\subset\C\spcheck$ by the closure $\fI_2$ of the ideal
$\fI^2=(\C_+\spcheck)^2$ in $\C\spcheck$, that is
$H^1(\C)\spcheck\simeq\fI/\fI_2$.
 Since $H^1(\C)$ is finite-dimensional, it follows that $\fI/\fI_2$
is also finite-dimensional and discrete, and $\fI_2$ is open in~$\fI$.
 (We will see below that $\fI^2$ is in fact an open ideal in
$\C\spcheck$, so $\fI_2=\fI^2$.)
 Since $R$ is dense in $\C\spcheck$ (and consequently $R\cap\fI$ is
dense in~$\fI$, as the unit element of $\C\spcheck$ belongs to $R$ by
the definition of a subring), there exists a finite-dimensional
subspace $U\spcheck\subset R\cap\fI$ such that the projection map
$U\spcheck\rarrow\fI/\fI_2$ is surjective.

 Dualizing the composition of maps $U\spcheck\rarrow R\cap\fI\rarrow
\fI=\C_+\spcheck$, we obtain a $k$\+linear map $\C_+\rarrow U$ from
$\C_+$ to a finite-dimensional $k$\+vector space $U$ such that
the composition $H^1(\C)\rarrow\C_+\rarrow U$ is injective.
 Since $\C$ is conilpotent, the composition $\C_+\rarrow\C\ot_k\C_+
\rarrow\C\ot_k U$ is then also an injective left $\C$\+comodule
morphism~\cite[Lemma~5.1]{Pqf}.
 Arguing as above, we see that for any left $\C$\+contramodule $\fL$
the subcontramodule $\fL^+\subset\fL$ is the image of the composition
$U\spcheck\ot_k\fL\rarrow\C\spcheck\ot_k\fL\rarrow\fL$ of the map
induced by the embedding $U\spcheck\rarrow\C\spcheck$ and
the $\C\spcheck$\+action map.

 The image of the map $U\spcheck\rarrow\C\spcheck$ lies in
$R\subset\C\spcheck$.
 So the map $U\spcheck\ot_k\fL\rarrow\fL$ can be also obtained as
the composition $U\spcheck\ot_k\fL\rarrow R\ot_k\fL\rarrow\fL$ of
the map induced by the embedding $U\spcheck\rarrow R$ and
the left $R$\+action map.
 It follows that any left $R$\+module morphism $\P\rarrow\Q$
between two left $\C$\+contramodules $\P$ and $\Q$ takes
$\P^+\subset\P$ into $\Q^+\subset\Q$.
 Once again, we have shown that $f(\P^+)\subset\Q^+$, and the argument
finishes as above.

 Finally, let us explain why $\fI_2=\fI^2=\fI U\spcheck\subset
\C\spcheck$ (notice that, the vector space $U\spcheck$ being
finite-dimensional, the ideal $\fI U\spcheck\subset\C\spcheck$
is clearly closed as the image of a continious linear map between
pseudo-compact vector spaces).
 Equivalently, this means that the composition $\C_+\rarrow\C_+\ot_k
\C_+\rarrow\C_+\ot_k U$ has the same kernel as the comultiplication
map $\C_+\rarrow\C_+\ot_k\C_+$.
 The kernel of the composition $\C_+\rarrow\C_+\ot_k\C_+\rarrow
\C_+\ot_k\C\ot_kU$ is equal to the kernel of the map $\C_+\rarrow
\C_+\ot_k\C_+$, as the map $\C_+\rarrow\C\ot_k U$ is injective
as we have seen.
 The composition $\C_+\rarrow\C_+\ot_k U\rarrow\C_+\ot_k\C\ot_kU$
provides the same map, so the kernel of $\C_+\rarrow\C_+\ot_k U$
is contained in that of $\C_+\rarrow\C_+\ot_k\C_+$.
 It follows that $\fI^n=\fI(U\spcheck)^{n-1}$ is a closed ideal
in $\C\spcheck$ for every $n\ge2$.
\end{proof}

\subsection{Cotensor product and contratensor product}
\label{cotensor-contratensor-subsecn}
 The \emph{cotensor product} $\N\oc_\C\M$ of a right $\C$\+comodule $\N$
and a left $\C$\+comodule $\M$ is a $k$\+vector space constructed as
the kernel of the difference of the two maps
$$
 \N\ot_k\M\rightrightarrows \N\ot_k\C\ot_k\M,
$$
one of which is induced by the right coaction map $\N\rarrow\N\ot_k\C$
and the other one by the left coaction map $\M\rarrow\C\ot_k\M$.
 The equivalences of categories~\eqref{linear-duality}
and~\eqref{linear-duality-over-C} transform the functor of cotensor
product of $\C$\+comodules into the functor of completed tensor product
of pseudo-compact $\C\spcheck$\+modules.

 For any pseudo-compact right $\C\spcheck$\+module $\M\spcheck$ and
any pseudo-compact left $\C\spcheck$\+module $\N\spcheck$,
the \emph{completed tensor product}
$\M\spcheck\wot_{\C\spcheck}\N\spcheck$ is a pseudo-compact $k$\+vector
space constructed as the cokernel of the difference of two maps
$$
 \M\spcheck\wot_k\C\spcheck\wot_k\N\spcheck\rightrightarrows
 \M\spcheck\wot_k\N\spcheck,
$$
one of which is induced by the right action map
$\M\spcheck\wot_k\C\spcheck\rarrow\M\spcheck$ and the other one by
the left action map $\C\spcheck\wot_k\N\spcheck\rarrow\N\spcheck$.
 For any right $\C$\+comodule $\N$ and a left $\C$\+comodule $\M$,
there is a natural isomorphism of pseudo-compact vector spaces
$$
 (\N\oc_\C\M)\spcheck\simeq\M\spcheck\wot_{\C\spcheck}\N\spcheck.
$$

 The functor of cotensor product of $\C$\+comodules is left exact.
 The functor of completed tensor product of pseudo-compact
$\C\spcheck$\+modules is right exact.

 The \emph{contratensor product} $\N\ocn_\C\P$ of a right
$\C$\+comodule $\N$ and a left $\C$\+contramodule $\P$ is
a $k$\+vector space constructed as the cokernel of the difference
of two natural maps
\begin{equation} \label{C-contratensor-pair-of-maps}
 \N\ot_k\Hom_k(\C,\P)\.\rightrightarrows\N\ot_k\P.
\end{equation}
 Here the first map is simply induced by the left contraaction map
$\pi_\P\:\Hom_k(\C,\P)\allowbreak\rarrow\P$, while the second map is
the composition $\N\ot_k\Hom_k(\C,\P)\rarrow
\N\ot_k\C\ot_k\Hom_k(\C,\P)\rarrow\N\ot_k\P$ of the map induced by
the right coaction map $\nu_\N\:\N\rarrow\N\ot_k\C$ and the map
induced by the evaluation map $\C\ot_k\Hom_k(\C,\P)\rarrow\P$.

 For any right $\C$\+comodule $\N$, left $\C$\+contramodule $\P$,
and $k$\+vector space $V$ there is a natural isomorphism of
$k$\+vector spaces
$$
 \Hom_k(\N\ocn_\C\P,\>V)\simeq\Hom^\C(\P,\Hom_k(\N,V)),
$$
where we denote by $\Hom^\C$ the space of morphisms in the category
of left $\C$\+contra\-modules $\C\contra$ \cite[Section~3.1]{Prev}.

 The functor of contratensor product of $\C$\+comodules and
$\C$\+contramodules is right exact.
 Let us emphasize that the functor of completed tensor product
is dual to the functor of cotensor product, as mentioned above.
 There is \emph{no} simple connection between the completed tensor
product and the contratensor product.

 There is an important, natural pair of adjoint functors between
the categories of left $\C$\+comodules and left $\C$\+contramodules,
\begin{equation} \label{phi-psi-C-adjunction}
 \Psi_\C\:\C\comodl\,\rightleftarrows\,\C\contra:\!\Phi_\C,
\end{equation}
defined by the rules $\Phi_\C(\P)=\C\ocn_\C\P$ and $\Psi_\C(\M)=
\Hom_\C(\C,\M)$, where $\P$ is a left $\C$\+contramodule, $\M$ is
a left $\C$\+comodule, and $\Hom_\C$ denotes the space of morphisms
in the category of left $\C$\+comodules $\C\comodl$
\cite[Section~3.1]{Prev}.

 Here the construction of the vector space $\C\ocn_\C\P$ uses
the right $\C$\+comodule structure on the coalgebra $\C$, while
the left $\C$\+comodule structure on $\C$ induces a left $\C$\+comodule
structure on the contratensor product $\C\ocn_\C\P$.
 Similarly, the construction of the vector space $\Hom_\C(\C,\M)$ uses
the left $\C$\+comodule structure on $\C$, while the right
$\C$\+comodule structure on $\C$ induces a left $\C$\+contramodule
structure on $\Hom_\C(\C,\M)$
(see~\eqref{right-co-left-contra} for the construction).
 The pair of adjoint functors $\Psi_\C$ and $\Phi_\C$ restricts to
an equivalence between the full subcategories of injective left
$\C$\+comodules and projective left $\C$\+contramodules,
\begin{equation} \label{underived-co-contra}
 \C\comodl^\inj\simeq\C\contra^\proj.
\end{equation}
 The equivalence of categories~\eqref{underived-co-contra} takes
the cofree left $\C$\+comodule $\C\ot_kV$ to the free left
$\C$\+contramodule $\Hom_k(\C,V)$ for any $k$\+vector space~$V$
\cite[Section~1.2]{Prev}, \cite[Section~0.2.6]{Psemi}.

 Returning to the discussion of the comparison of contramodules with
pseudo-compact modules in Section~\ref{comodules-contramodules-subsecn},
one has to say that, generally speaking, of course,
the $\C$\+contramodule structure on $\Psi_\C(\M)$ does \emph{not}
underlie any pseudo-compact topology.
 In fact, for the trivial coalgebra $\C=k$, one has $\C\comodl
=k\vect=\C\contra$, and both $\Phi_\C$ and $\Psi_\C$ are simply
the identity functors $k\vect\rarrow k\vect$.

 However, there is one notable particular case when a natural
structure of a pseudo-compact left $\C\spcheck$\+module can be defined
on the vector space $\Psi_\C(\M)$, and the left $\C$\+contramodule
structure of $\Psi_\C(\M)$ comes from this pseudo-compact module
structure.
 This is the case for \emph{finitely cogenerated} injective left
$\C$\+comodules $\M$ (i.~e., the direct summands of cofree left
$\C$\+comodules $\C\ot_kV$ with finite-dimensional vector spaces of
cogenerators~$V$) and, more generally, for \emph{quasi-finitely
cogenerated} left $\C$\+comodules~$\M$.
 We refer to the discussion in Section~\ref{coalgebra-duality} of
the introduction, and to the details in
Sections~\ref{quasi-finite-subsecn}\+-\ref{qf-co-presented-subsecn}
below.

 For any coalgebra $\C$, a right $\C$\+comodule $\N$, and a left
$\C$\+contramodule $\P$ there is a natural surjective map from
the tensor product to the contratensor product
\begin{equation} \label{tensor-contratensor}
 \N\ot_{\C\spcheck}\P\lrarrow\N\ocn_\C\P.
\end{equation}
 Here the tensor product in the left-hand side is the abstract
(uncompleted) tensor product of abstract (nontopological)
$\C\spcheck$\+modules whose $\C\spcheck$\+module structures arise from
the comodule and contramodule structures via the forgetful
functors~\eqref{right-co-forgetful} and~\eqref{contra-forgetful}.
 In order to construct the surjective map~\eqref{tensor-contratensor},
it suffices to observe that there is a pair of commutative diagrams
$$
\begin{diagram}
\node{\N\ot_k\C\spcheck\ot_k\P}\arrow{e,t}{\rightrightarrows}
\arrow{s,V} \node{\N\ot_k\P} \arrow{s,=} \\
\node{\N\ot_k\Hom_k(\C,\P)} \arrow{e,t}{\rightrightarrows}
\node{\N\ot_k\P}
\end{diagram}
$$
where the tensor product $\N\ot_{\C\spcheck}\P$ is the cokernel of
the difference of the two maps in the upper horizontal line and
the contratensor product $\N\ocn_\C\P$ is the cokernel of
the difference of the two arrows in the lower horizontal line.

\begin{cor} \label{fg-coalgebra-contratensor-iso}
 Let\/ $\C$ be a conilpotent coalgebra such that the $k$\+vector space
$H^1(\C)$ is finite-dimensional.
 Then for any right\/ $\C$\+comodule\/ $\N$ and any left\/
$\C$\+contramodule\/ $\P$ the natural map\/ $\N\ot_{\C\spcheck}\P
\rarrow\N\ocn_\C\P$ is an isomorphism.

 Moreover, for any dense subring $R$ in the topological ring\/
$\C\spcheck$, the natural map $\N\ot_R\P\rarrow\N\ocn_\C\P$ is
an isomorphism.
\end{cor}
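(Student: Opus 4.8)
The plan is to reduce the assertion to the full faithfulness established in Theorem~\ref{fg-coalgebra-contra-fully-faithful} by dualizing into an arbitrary $k$\+vector space and applying the Yoneda lemma. Recall the two adjunction isomorphisms at our disposal: for the contratensor product one has, for every $k$\+vector space $V$,
$$
 \Hom_k(\N\ocn_\C\P,\>V)\.\simeq\.\Hom^\C(\P,\Hom_k(\N,V)),
$$
while for the ordinary tensor product over the algebra $\C\spcheck$ the usual Hom-tensor adjunction gives
$$
 \Hom_k(\N\ot_{\C\spcheck}\P,\>V)\.\simeq\.\Hom_{\C\spcheck}(\P,\Hom_k(\N,V)).
$$
Here $\N$ is regarded as a right $\C\spcheck$\+module, the space $\Hom_k(\N,V)$ carries its natural left $\C$\+contramodule structure, and the $\C\spcheck$\+module underlying this contramodule is the one that appears on the right-hand side of the second isomorphism.

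First I would check that applying the functor $\Hom_k(-,V)$ to the natural surjection $\N\ot_{\C\spcheck}\P\rarrow\N\ocn_\C\P$ yields, under the two adjunctions above, exactly the forgetful map
$$
 \Hom^\C(\P,\Hom_k(\N,V))\lrarrow\Hom_{\C\spcheck}(\P,\Hom_k(\N,V))
$$
induced by the forgetful functor $\C\contra\rarrow\C\spcheck\modl$. This is a direct unwinding of the definitions: both adjunctions send a functional to the same underlying map $p\mapsto(n\mapsto\tilde g(n\ot p))$, the only difference being whether the balancing condition defining $\N\ocn_\C\P$ or the weaker one defining $\N\ot_{\C\spcheck}\P$ is recorded, and a $\C$\+contramodule morphism is precisely a $\C\spcheck$\+module morphism satisfying the former. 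This compatibility, though routine, is the one point that genuinely requires verification.

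With this in hand the conclusion is formal. Since both $\P$ and $\Hom_k(\N,V)$ are left $\C$\+contramodules, Theorem~\ref{fg-coalgebra-contra-fully-faithful} asserts that the displayed forgetful map on Hom spaces is a bijection. Hence $\Hom_k(\N\ocn_\C\P,V)\rarrow\Hom_k(\N\ot_{\C\spcheck}\P,V)$ is an isomorphism, naturally in~$V$; letting $V$ run over all $k$\+vector spaces and applying the Yoneda lemma shows that the natural surjection $\N\ot_{\C\spcheck}\P\rarrow\N\ocn_\C\P$ is itself an isomorphism. For the general statement I would repeat the argument with $\C\spcheck$ replaced by a dense subring $R$, restricting the $\C\spcheck$\+module structures on $\N$ and $\P$ along $R\hookrightarrow\C\spcheck$: the Hom-tensor adjunction then reads $\Hom_k(\N\ot_R\P,V)\simeq\Hom_R(\P,\Hom_k(\N,V))$, and the second assertion of Theorem~\ref{fg-coalgebra-contra-fully-faithful} identifies this with $\Hom^\C(\P,\Hom_k(\N,V))$, so the same Yoneda argument applies verbatim.
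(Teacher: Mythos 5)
Your proposal is correct and follows essentially the same route as the paper: apply $\Hom_k(-,V)$, identify the dual of the comparison map via the two adjunction isomorphisms with the forgetful map $\Hom^\C(\P,\Hom_k(\N,V))\rarrow\Hom_R(\P,\Hom_k(\N,V))$, and invoke Theorem~\ref{fg-coalgebra-contra-fully-faithful}. The paper's proof is just a terser version of this, leaving the compatibility check and the final Yoneda (or exactness of $\Hom_k(-,V)$) step implicit.
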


\begin{proof}
 Applying the functor $\Hom_k({-},V)$ to the map in question, we
obtain the map
\begin{multline*}
 \Hom^\C(\P,\.\Hom_k(\N,V))\.\simeq\.\Hom_k(\N\ocn_\C\P,\,V) \\
 \lrarrow \Hom_k(\N\ot_R\P,\,V) \.\simeq\.
 \Hom_R(\P,\.\Hom_k(\N,V)),
\end{multline*}
which is an isomorphism by
Theorem~\ref{fg-coalgebra-contra-fully-faithful}.
\end{proof}

\subsection{Cohomomorphisms} \label{cohom-subsecn}
 Yet another relevant tensor operation involving comodules and
contramodules is the vector space of \emph{cohomomorphisms}, or Cohom.
 It can be thought of as answering the following question.

 In the previous section, we have seen how to construct the dual vector
space to the cotensor product $(\N\oc_\C\M)\spcheck$ in terms of
the pseudo-compact $\C\spcheck$\+modules $\N\spcheck$ and $\M\spcheck$.
 In this section we will see how to construct the vector space
$(\N\oc_\C\M)\spcheck$ in terms of the left $\C$\+comodule $\M$ and
the left $\C$\+contramodule $\N\spcheck$.

 One difference is that we will only obtain $(\N\oc_\C\M)\spcheck$ as
an abstract vector space, without the pseudo-compact topology on it.
 On the other hand, the same construction will also produce the vector
space $\Hom_k(\N\oc_\C\M,\>V)$ from the left $\C$\+comodule $\M$ and
the left $\C$\+contramodule $\Hom_k(\N,V)$ for any vector space~$V$.

 Let $\M$ be a left $\C$\+comodule and $\P$ be a left $\C$\+contramodule.
 The $k$\+vector space of \emph{cohomomorphisms} $\Cohom_\C(\M,\P)$ is
constructed as the cokernel of the difference of the two maps
$$
 \Hom_k(\C\ot_k\M,\>\P)=\Hom_k(\M,\Hom_k(\C,\P))\.\rightrightarrows\.
 \Hom_k(\M,\P),
$$
one of which is induced by the left coaction map $\M\rarrow\C\ot_k\M$
and the other one by the left contraaction map $\Hom_k(\C,\P)\rarrow\P$.

 For any right $\C$\+comodule $\N$, left $\C$\+comodule $\M$, and
$k$\+vector space $V$ there is a natural isomorphism of $k$\+vector
spaces~\cite[Section~2.6]{Prev}
\begin{equation} \label{cotensor-cohom}
 \Hom_k(\N\oc_\C\M,\>V)\simeq\Cohom_k(\M,\Hom_k(\N,V)).
\end{equation}

 Let us emphasize that $\Cohom_\C$ is \emph{not} the functor Hom in any
category, if only because its two arguments are objects from two
different categories: $\M$ is a left $\C$\+comodule, and $\P$ is a left
$\C$\+contramodule.
 Furthermore, the functor $\Cohom_\C$ is right exact, while the functor
Hom in any abelian category is left exact.

\subsection{Injective and projective objects}
 Our next aim is to show that, under certain assumptions, all injective
$\C$\+comodules are injective $\C\spcheck$\+modules and all projective
$\C$\+contramodules are flat $\C\spcheck$\+modules.
 It will be convenient to increase the generality slightly and
use the language centered around the topological ring
$\fR=\C\spcheck$ rather than the coalgebra~$\C$.

 Let $\fR$ be an associative ring and $\fI\subset\fR$ be a two-sided
ideal such that $\fR$ is separated and complete in the $\fI$\+adic
topology, that is $\fR=\varprojlim_n\fR/\fI^n$.
 Consider the associated graded ring $\gr_\fI\fR =
\bigoplus_{n=0}^\infty\fI^n/\fI^{n+1}$ and the ideal
$\gr_\fI\fI = \bigoplus_{n=1}^\infty\fI^n/\fI^{n+1}\subset\gr_\fI\fR$.
 The following assertion is a version of the Artin--Rees lemma.

\begin{lem} \label{artin-rees}
 Assume that the ring\/ $\gr_\fI\fR$ is right Noetherian and
the ideal\/ $\gr_\fI\fI\subset\gr_\fI\fR$ is generated by
(a finite set of) central elements in\/ $\gr_\fI\fR$.
 Let $M$ be a finitely generated right\/ $\fR$\+module with
an\/ $\fR$\+submodule $N\subset M$.
 Then there exists an integer $m\ge0$ such that $N\cap M\fI^{n+m} =
(N\cap M\fI^m)\fI^n$ for all $n\ge0$.
\end{lem}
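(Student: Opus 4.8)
The plan is to deduce the statement from the right Noetherianity of the Rees ring $\fR^{\ast}=\bigoplus_{n\ge0}\fI^{n}$ (with $\fI^{0}=\fR$), graded so that the $n$\+th component is~$\fI^{n}$ and the multiplication is induced by that of~$\fR$. Granting this, the argument is the classical one: the right $\fR^{\ast}$\+module $M^{\ast}=\bigoplus_{n\ge0}M\fI^{n}$ is generated in degree~$0$ by any finite generating set of~$M$, hence is finitely generated, while $N^{\ast}=\bigoplus_{n\ge0}(N\cap M\fI^{n})$ is a graded $\fR^{\ast}$\+submodule of~$M^{\ast}$. Right Noetherianity of $\fR^{\ast}$ makes $M^{\ast}$ a Noetherian module, so $N^{\ast}$ is finitely generated, say by homogeneous elements of degrees $\le m$. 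For $k\ge m$ this yields $N\cap M\fI^{k}=\sum_{j\le m}(N\cap M\fI^{j})\fI^{k-j}=(N\cap M\fI^{m})\fI^{k-m}$, which is exactly the asserted equality upon setting $k=n+m$. I emphasize that this reduction needs neither completeness nor separatedness of~$M$; those enter only in the proof that $\fR^{\ast}$ is Noetherian.

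To prove $\fR^{\ast}$ right Noetherian I would use that a nonnegatively graded ring is right Noetherian once all of its graded right ideals are finitely generated, and I would attack graded right ideals by a leading\+term (standard\+basis) argument over an auxiliary associated graded ring. Filter $\fR^{\ast}$ by the powers of the graded ideal $\fK=\bigoplus_{n\ge0}\fI^{n+1}\subset\fR^{\ast}$; one computes $\fK^{p}=\bigoplus_{n\ge0}\fI^{n+p}$, so the associated graded ring $\bigoplus_{p\ge0}\fK^{p}/\fK^{p+1}$ has $(p,n)$\+component $\fI^{n+p}/\fI^{n+p+1}=(\gr_\fI\fR)_{n+p}$; as a ring it is the Rees algebra of $\gr_\fI\fR$ along the filtration by $(\gr_\fI\fR)_{\ge p}$. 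Here the hypotheses on $\gr_\fI\fR$ do the work. Since $\gr_\fI\fR$ is right Noetherian, its augmentation ideal $\gr_\fI\fI=(\gr_\fI\fR)_{\ge1}$ is finitely generated, and by assumption it is generated by finitely many central elements, which may be taken homogeneous; consequently this Rees algebra is generated over the right Noetherian ring $(\gr_\fI\fR)_{0}=\fR/\fI$ by the finitely many central elements built from the central generators of~$\gr_\fI\fI$, and it is therefore right Noetherian by the Hilbert basis theorem for central extensions.

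Finally I would run the lifting. Given a graded right ideal $\fJ\subset\fR^{\ast}$ with its induced $\fK$\+filtration, its leading\+term ideal in the associated graded ring is finitely generated; I choose homogeneous $g^{(1)},\dotsc,g^{(r)}\in\fJ$ whose symbols generate it. For a homogeneous $x\in\fJ$ of internal degree~$n$, matching leading terms produces coefficients in~$\fR^{\ast}$ and a new element $x'\in\fJ$ of the same internal degree but strictly larger $\fK$\+order; iterating yields, for each~$i$, a series of coefficients all lying in the single component $\fI^{\,n-n_{i}}$ with $\fI$\+adic orders tending to infinity. This is where completeness enters: each $\fI^{\,n-n_{i}}$ is a closed, hence complete, subspace of the complete separated ring~$\fR$, so these series converge there and the residual terms tend to zero, giving $x\in\sum_{i}g^{(i)}\fR^{\ast}$; thus $\fJ$ is finitely generated. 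The hard part is precisely this last step: the Rees ring $\fR^{\ast}$ is \emph{not} itself $\fK$\+adically complete (its completion involves products, not direct sums, over the internal grading), so one cannot merely invoke ``a complete ring with Noetherian associated graded is Noetherian.'' One must instead carry out the approximation while respecting the internal grading, exploiting the fact that completeness survives componentwise because each power $\fI^{n}$ is closed in~$\fR$.
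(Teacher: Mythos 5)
Your proposal is correct and follows essentially the same route as the paper: reduce to right Noetherianity of the Rees ring $\bigoplus_{n\ge0}\fI^n$, filter it by a graded ideal whose associated graded ring is the Rees ring of $\gr_\fI\fR$ along $\gr_\fI\fI$ (right Noetherian by the Hilbert basis theorem for central generators), and then lift finitely many generators of a graded right ideal using the fact that the Rees ring is complete for that filtration \emph{componentwise}, i.e.\ in the category of graded abelian groups. The only differences are cosmetic: you filter by $\fK$ with $\fK_n=\fI^{n+1}$ where the paper uses $\fJ=\fI\oplus\fI\oplus\fI^2\oplus\dotsb$, and you spell out the lifting step (including the correct caveat that the Rees ring is not $\fK$\+adically complete as an ungraded ring) which the paper compresses into ``a standard argument.''
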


\begin{proof}
 As in~\cite[Lemma~13.2]{GW}, it suffices to show that the Rees
ring $\fR(\fI)=\bigoplus_{n=0}^\infty\fI^n$ is right Noetherian
(as a graded ring).
 Indeed, $M(\fI)=\bigoplus_{n=0}^\infty M\fI^n$ is a finitely generated
graded $\fR(\fI)$\+module, and $\bigoplus_{n=0}^\infty N\cap M\fI^n$ is
a graded $\fR(\fI)$\+submodule in $M(\fI)$.
 It remains to choose $m\ge0$ such that this submodule is generated
by some (finite) set of elements of the degree~$\le m$.

 Consider the ideal $\fJ=\fI\oplus\fI\oplus\fI^2\oplus\fI^3\subset
\fR(\fI)$, so that the quotient ring is $\fR(\fI)/\fJ=\fR/\fI$.
 The ring $\fR(\fI)$ is separated and complete (as a graded ring)
in the $\fJ$\+adic topology, that is $\fR(\fI)=\varprojlim_n
\fR(\fI)/\fJ^n$ in the category of graded abelian groups.
 The associated graded ring $\gr_\fJ\fR(\fI)=\bigoplus_{n=0}^\infty
\fJ^n/\fJ^{n+1}$ is isomorphic to the Rees ring of the graded ring
$\gr_\fI\fR$ (endowed with the decreasing filtration associated
with the grading), $\gr_\fJ\fR(\fI)\simeq\gr_\fI\fR(\gr_\fI\fI)$.

 Hence the (bi)graded ring $\gr_\fJ\fR(\fI)$ is a quotient ring of
the polynomial ring in a finite number of variables over
the right Noetherian ring $\gr_\fI\fR$, so it is right Noetherian,
as in~\cite[Theorems~1.9 and~13.3]{GW}.
 It remains to deduce the assertion that the graded ring $\fR(\fI)$
is right Noetherian.
 This is a standard argument: given a homogeneous right ideal
$\fK\subset\fR(\fI)$, one chooses a finite set of bihomogeneous
generators of the right ideal $\gr_\fJ\fK\subset\gr_\fJ\fR(\fI)$
and lifts them to homogeneous elements in $\fK$, obtaining
a finite set of generators of the right ideal $\fK\subset\fR(\fI)$.
\end{proof}

 Denote by $\discrR\fR\subset\modr\fR$ the full subcategory of
discrete right $\fR$\+modules (i.~e., right $\fR$\+modules $\N$ such
that the annihilator of every element $x\in\N$ is a open right ideal
in~$\fR$).
 The category $\discrR\fR$ is a Grothendieck abelian category, so
it has enough injective objects.
 
 The definition of a left $\fR$\+contramodule can be found
in~\cite[Section~1.2]{Pweak}, \cite[Section~2.1]{Prev}; we do not
repeat it here, as all we need is the construction of \emph{free}
left $\fR$\+contramodules.
 Given a set $X$ and a ring $R$, denote by $R[X]$ the free left
$R$\+module generated by the set~$X$.
 Then the free left $\fR$\+contramodule generated by $X$ is
$$
 \fR[[X]]=\varprojlim\nolimits_n(\fR/\fI^n)[X].
$$
 In other words, $\fR[[X]]$ is the set of all maps of sets
$f\:X\rarrow\fR$ converging to zero in the topology of $\fR$,
which means that for every $n\ge1$ one has $f(x)\in\fI^n$ for
all but a finite subset of elements $x\in X$.

 When $\C$ is a conilpotent coalgebra with finite-dimensional space
of cogenerators $H^1(\C)$, and $\fR=\C\spcheck$ is the dual algebra,
the pseudo-compact topology of $\C\spcheck$ coincides with the adic
topology for the augmentation ideal $\fI=\C_+\spcheck\subset\C\spcheck$.
 Hence the above definition of a free $\fR$\+contramodule agrees
with the definition of a free $\C$\+contramodule
(cf.~\cite[Section~1.10]{Pweak}, \cite[Section~2.3]{Prev}).

\begin{prop} \label{noetherian-injective-flat-prop}
 Assume that the ring\/ $\gr_\fI\fR$ is right Noetherian and
the ideal\/ $\gr_\fI\fI\subset\gr_\fI\fR$ is generated by
(a finite set of) central elements in\/ $\gr_\fI\fR$.
 Then \par
\textup{(a)} every injective object of\/ $\discrR\fR$ is also
an injective object of\/ $\modr\fR$; \par
\textup{(b)} for every set $X$, the left\/ $\fR$\+module\/
$\fR[[X]]$ is flat.
\end{prop}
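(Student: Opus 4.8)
The plan is to reduce both parts to two facts: that $\fR$ is right Noetherian, and that the Artin--Rees Lemma~\ref{artin-rees} makes the relevant $\Tor$\+towers pro\+zero. For the first, since $\gr_\fI\fR$ is right Noetherian and $\fR$ is separated and complete in the $\fI$\+adic topology, the ring $\fR$ is right Noetherian (for instance, because $\fR$ is a quotient of the Rees ring $\fR(\fI)$, which is shown to be right Noetherian in the proof of Lemma~\ref{artin-rees}). Hence every right ideal of $\fR$ is finitely generated and every finitely generated right $\fR$\+module has a resolution by finitely generated free modules. The common engine is then: \emph{for every finitely generated right\/ $\fR$\+module $M$ and every $i\ge1$, the inverse system $\{\Tor_i^\fR(M,\fR/\fI^n)\}_n$ is pro\+zero}, i.e.\ for each $n$ some transition map $\Tor_i^\fR(M,\fR/\fI^{n'})\rarrow\Tor_i^\fR(M,\fR/\fI^n)$ vanishes. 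I would prove this by induction on~$i$. For $i=1$, choose a presentation $0\rarrow K\rarrow\fR^b\rarrow M\rarrow0$ with $K$ finitely generated; then $\Tor_1^\fR(M,\fR/\fI^n)=(K\cap\fR^b\fI^n)/K\fI^n$, and Artin--Rees applied to $K\subset\fR^b$ gives an $m$ with $K\cap\fR^b\fI^{n+m}\subset K\fI^n$, which makes the transition from level $n+m$ to level $n$ zero. For $i\ge2$ the dimension\+shift isomorphism $\Tor_i^\fR(M,\fR/\fI^n)\simeq\Tor_{i-1}^\fR(K,\fR/\fI^n)$ is natural in $\fR/\fI^n$, hence compatible with the towers, so the inductive hypothesis for the finitely generated module $K$ finishes the step. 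Pro\+zero systems have vanishing $\varprojlim$ and $\varprojlim\nolimits^1$.

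For part~(a) I would apply Baer's criterion. Let $J$ be injective in $\discrR\fR$, let $\fa\subset\fR$ be a right ideal (finitely generated, by Noetherianity), and let $\phi\:\fa\rarrow J$ be $\fR$\+linear. Because $J$ is discrete and $\phi(\fa)$ is finitely generated, there is an $N$ with $\phi(\fa\fI^N)=0$; combined with $\fa\cap\fI^{N+m}\subset\fa\fI^N$ from Artin--Rees, this gives $\phi(\fa\cap\fI^L)=0$ for $L=N+m$. Thus $\phi$ descends to $\bar\phi\:\fa/(\fa\cap\fI^L)\rarrow J[\fI^L]$, a morphism of right $\fR/\fI^L$\+modules whose source embeds into $\fR/\fI^L$. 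The key point is that $J[\fI^L]$ is injective over $\fR/\fI^L$: the truncation functor $N\mapsto N[\fI^L]$ is right adjoint to the exact fully faithful inclusion $\modr(\fR/\fI^L)\rarrow\discrR\fR$, hence carries injectives to injectives. Therefore $\bar\phi$ extends over $\fR/\fI^L$, and pulling back along $\fR\rarrow\fR/\fI^L$ extends $\phi$ over $\fR$.

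For part~(b), observe that $\fR[[X]]=\varprojlim_n(\fR/\fI^n)[X]$ is the $\fI$\+adic completion of the free left module $\fR[X]$. It suffices to show $\Tor_1^\fR(M,\fR[[X]])=0$ for every finitely generated right $\fR$\+module $M$; flatness then follows, as every right module is a directed union of such $M$ and $\Tor_1$ commutes with direct limits. Choosing a finitely generated free resolution $P_\bullet\rarrow M$, I would compute $\Tor_\ast^\fR(M,\fR[[X]])$ as the homology of $P_\bullet\ot_\fR\fR[[X]]=\varprojlim_n(P_\bullet\ot_\fR\fR/\fI^n)[X]$. The transition maps of this tower of complexes are degreewise surjective, so the Milnor sequence applies:
\begin{multline*}
 0\rarrow\varprojlim\nolimits^1_n\,\Tor_{i+1}^\fR(M,\fR/\fI^n)[X] \\
 \rarrow\Tor_i^\fR(M,\fR[[X]])\rarrow
 \varprojlim\nolimits_n\Tor_i^\fR(M,\fR/\fI^n)[X]\rarrow0.
\end{multline*}
For $i\ge1$ both outer terms vanish by the pro\+zero property, so $\Tor_i^\fR(M,\fR[[X]])=0$; taking $i=1$ gives the flatness of $\fR[[X]]$.

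I expect the main obstacle to lie in part~(b): correctly identifying $P_\bullet\ot_\fR\fR[[X]]$ with the termwise inverse limit of the layer complexes, verifying the degreewise surjectivity that the Milnor sequence requires, and then feeding in the pro\+vanishing of the $\Tor$\+towers --- which is precisely where Artin--Rees, and hence the centrality hypothesis on $\gr_\fI\fI$, is used. By comparison part~(a) is soft once one recognizes truncation by $\fI^L$ as a right adjoint to an exact inclusion.
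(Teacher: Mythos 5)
Your argument is correct, and it runs on the same two inputs as the paper's proof (right Noetherianity of $\fR$ and the Artin--Rees Lemma~\ref{artin-rees}), but the packaging differs, most noticeably in part~(b). For part~(a) the paper does not pass through Baer's criterion or the truncation adjunction: it extends a map $f\:N\rarrow\J$ for an arbitrary finitely generated $M$ with submodule $N$, observing that after Artin--Rees the induced map $N/(N\cap M\fI^m)\rarrow\J$ is a map from a submodule of the discrete module $M/M\fI^m$, so the injectivity of $\J$ in $\discrR\fR$ applies directly; your version is essentially the special case $M=\fR$, $N=\fa$, with the extra (correct) observation that $\J[\fI^L]$ is injective over $\fR/\fI^L$ because truncation is right adjoint to an exact inclusion. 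For part~(b) the paper avoids $\Tor$ and $\varprojlim^1$ altogether: it shows that the functor $M\longmapsto\varprojlim_n M\ot_\fR\fR/\fI^n[X]$ is exact on finitely generated modules (Artin--Rees gives cofinality of the projective systems $K/(K\cap L\fI^n)$ and $K/K\fI^n$, and surjectivity of the transition maps gives exactness of the limit), and then identifies this functor with $M\ot_\fR\fR[[X]]$ by right-exactness plus the case of finitely generated free modules. Your route --- pro-vanishing of the towers $\Tor_i^\fR(M,\fR/\fI^n)$ for $i\ge1$, established by dimension shift from the $i=1$ case where Artin--Rees gives the vanishing transition map on $(K\cap\fR^b\fI^{n+m})/K\fI^{n+m}\rarrow(K\cap\fR^b\fI^n)/K\fI^n$, fed into the Milnor sequence for the tower of complexes $(P_\bu\ot_\fR\fR/\fI^n)[X]$ --- yields the stronger conclusion $\Tor_i^\fR(M,\fR[[X]])=0$ for all $i\ge1$ in one stroke, at the cost of invoking $\varprojlim^1$; the $i=0$ instance of your Milnor sequence recovers exactly the paper's comparison isomorphism between $M\ot_\fR\fR[[X]]$ and the inverse limit. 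Both arguments are complete and use the centrality/Noetherianity hypotheses in the same place, namely through Lemma~\ref{artin-rees}.
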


\begin{proof}
 This is a version of~\cite[Proposition~B.9.1]{Pweak}
and~\cite[Proposition~2.2.2]{Prev}.
 Notice first of all that the ring $\fR$ is right Noetherian
by~\cite[Proposition~7.27]{DSMS} (cf.\ the related argument in
the proof of Lemma~\ref{artin-rees} above).
 Part~(a): let $\J$ be an injective object of $\discrR\fR$.
 In order to show that $\J$ is an injective right $\fR$\+module,
it suffices to check that, for any finitely generated right
$\fR$\+module $M$ with an $\fR$\+submodule $N\subset M$, any
$\fR$\+module morphism $f\:N\rarrow\J$ can be extended to
an $\fR$\+module morphism $M\rarrow\J$.
 Since the $\fR$\+module $N$ is finitely generated and
the $\fR$\+module $\J$ is discrete, there exists $n\ge0$ such that
$f(N\fI^n)=f(N)\fI^n=0$ in~$\J$.
 By Lemma~\ref{artin-rees}, there exists $m\ge0$ such that
$N\cap M\fI^m\subset N\fI^n$.
 Now $N/(N\cap M\fI^m)$ is a submodule in a discrete right
$\fR$\+module $M/M\fI^m$, so the $\fR$\+module morphism
$N/(N\cap M\fI^m)\rarrow N/N\fI^n\rarrow\J$ can be extended to
an $\fR$\+module morphism $M/M\fI^m\rarrow\J$.

 Part~(b): it suffices to show that the functor $M\longmapsto
M\ot_\fR\fR[[X]]$ is exact on the abelian category of finitely
generated right $\fR$\+modules.
 For any such $M$, consider the natural morphism of abelian groups
\begin{equation} \label{flatness-comparison-morphism}
 M\ot_\fR\fR[[X]]\lrarrow\varprojlim\nolimits_n M\ot_\fR\fR/\fI^n[X].
\end{equation}

 For any short exact sequence of finitely generated right
$\fR$\+modules $0\rarrow K\rarrow L\rarrow M\rarrow 0$, there is
a short exact sequence of $\fR/\fI^n$\+modules
$$
 0\lrarrow K/(K\cap L\fI^n)\lrarrow L/L\fI^n\lrarrow M/M\fI^n\lrarrow0,
$$
which remains exact after applying the functor
${-}\ot_{\fR/\fI^n}\fR/\fI^n[X]$.
 By Lemma~\ref{artin-rees}, the projective systems $K/(K\cap L\fI^n)$
and $K/K\fI^n$ are cofinal, so
$$
 \varprojlim\nolimits_n K/(K\cap L\fI^n)\ot_{\fR/\fI^n}\fR/\fI^n[X]
 = \varprojlim\nolimits_n K/K\fI^n\ot_{\fR/\fI^n}\fR/\fI^n[X].
$$
 Since $K/(K\cap L\fI^n)\ot_{\fR/\fI^n}\fR/\fI^n[X]$ is a projective
system of surjective morphisms, passing to the projective limit
produces a short exact sequence
$$
 0\rarrow\varprojlim\nolimits_n K\ot_\fR\fR/\fI^n[X]
 \rarrow\varprojlim\nolimits_n L\ot_\fR\fR/\fI^n[X]
 \rarrow\varprojlim\nolimits_n M\ot_\fR\fR/\fI^n[X]
 \rarrow0.
$$
 Hence the functor in the right-hand side of
the morphism~\eqref{flatness-comparison-morphism} is exact.

 The functor in the left-hand side is right exact, and the morphism
is an isomorphism when $M$ is a finitely generated \emph{free} right
$\fR$\+module.
 It follows that the morphism~\eqref{flatness-comparison-morphism}
is an isomorphism for every finitely generated right $\fR$\+module~$M$.
 Thus the left-hand side of~\eqref{flatness-comparison-morphism}
is also an exact functor of~$M$.
\end{proof}

\subsection{Profinite groups} \label{profinite-groups-subsecn}
 Let $H$ be a profinite group and $k$~be a field.
 An \emph{$H$\+module over~$k$} is a $k$\+vector space in which
$H$ acts by $k$\+linear automorphisms.
 An $H$\+module $\M$ over~$k$ is called \emph{discrete} if
the stabilizer of every element $x\in\M$ is an open subgroup in~$H$.
 The abelian category of $H$\+modules over~$k$ is denoted by $H\modl_k$
and the abelian category of discrete $H$\+modules over~$k$ is denoted
by $H\discr_k\subset H\modl_k$.

 Let $X$ denote a profinite set (or in other words, a compact
totally disconnected topological space).
 For any $k$\+vector space $V$, we denote by $V(X)$ the $k$\+vector
space of all locally constant functions $X\rarrow V$.
 We also denote by $V[[X]]$ the $k$\+vector space of all finitely
additive $V$\+valued measures defined on all the open-closed subsets
in~$X$.
 Then there are natural isomorphisms
$$
 V(X)\simeq V\ot_kk(X), \qquad V[[X]]\.\simeq\.\Hom_k(k(X),V),
$$
and
$$
 V(H)=\varinjlim_{U\subset H} V(H/U), \qquad
 V[[H]]=\varprojlim_{U\subset H} V[H/U],
$$
where the limits are taken over all the (if one wishes, normal) open
subgroups $U\subset H$, and $V[H/U]=V(H/U)$ denotes the vector space
of all functions $H/U\rarrow V$.
 For any two profinite sets $X$ and $Y$, there are natural
isomorphisms
$$
 k(X\times Y)\simeq k(X)\ot_k k(Y), \qquad
 V[[X\times Y]]\simeq V[[X]][[Y]].
$$

 An \emph{$H$\+contramodule over~$k$} is a $k$\+vector space $\P$
endowed with a $k$\+linear \emph{$H$\+contra\-action} map
$\pi_\P\:\P[[H]] \rarrow\P$ satisfying the following two axioms.
 Firstly, the two maps $\P[[H]][[H]]\simeq\P[[H\times H]]
\rightrightarrows\P[[H]]$, one of them provided by the pushforward
of measures with respect to the multiplication map $H\times H
\rarrow H$ and the other one induced by the contraaction map~$\pi_\P$,
should have equal compositions with the contraaction map~$\pi_\P$,
$$
 \P[[H]][[H]]\.\simeq\.\P[[H\times H]]\.\rightrightarrows\.\P[[H]]
 \rarrow\P.
$$
 Secondly, the composition of the map $\P\rarrow\P[[H]]$ assigning
to a vector $x\in\P$ the point measure concentrated at the unit
element $e\in H$ with the value~$x$ and the contraaction map~$\pi_\P$
should be equal to the identity map $\operatorname{id}_\P$,
$$
 \P\lrarrow\P[[H]]\lrarrow\P.
$$
 We refer to~\cite[Section~1.8]{Prev} for a discussion of
the intuition behind this concept.

 $H$\+contramodules over~$k$ form an abelian category $H\contra_k$.
 Given an $H$\+contra\-module $\P$ over~$k$, a vector $x\in\P$,
and an element $h\in H$, one can consider the point measure at
$h^{-1}\in H$ with the value $x\in\P$.
 Applying the contraaction map~$\pi_\P$ to this measure, one obtains
an element denoted by $h(x)\in\P$.
 This construction defines the underlying $H$\+module structure on
an $H$\+contramodule $\P$, providing an exact and faithful forgetful
functor $H\contra_k\rarrow H\modl_k$.

 The $k$\+vector space $\C=k(H)$ has a natural coalgebra structure
with the comultiplication map $\C\rarrow\C\ot_k\C$ provided by
the pullback of functions with respect to the multiplication map
$H\times H\rarrow H$ and the counit map $\C\rarrow k$ similarly
induced by the unit map $\{*\}\rarrow H$.
 The dual algebra $\C\spcheck=k[[H]]$ is the projective limit of
the group algebras $\varprojlim_{U\subset H} k[H/U]$, as above.

 The datum of a discrete action of $H$ on a $k$\+vector space $\M$
is equivalent to that of a (left or right) $\C$\+comodule structure
on~$\M$.
 Analogously, the datum of an $H$\+contramodule structure on
a $k$\+vector space $\P$ is equivalent to that of a (left or right)
$\C$\+contramodule structure.
 So there are natural isomorphisms of categories $H\discr_k=
k(H)\comodl=\comodr k(H)$ and $H\contra_k=k(H)\contra$ (where
the left and right $k(H)$\+comodules are identified by means of
the involutive anti-automorphism of the coalgebra $k(H)$ induced
by the inverse element map $H\rarrow H$).
 
 As usually, we denote by $k[H]$ the group algebra of the group $H$
(viewed as an abstract group with the topology forgotten).
 So we have natural isomorphisms of categories $H\modl_k\simeq k[H]
\modl\simeq\modr k[H]$.
 There is a natural injective homomorphism of $k$\+algebras
$k[H]\rarrow k[[H]]=\C\spcheck$ inducing the embedding functor
$H\discr_k\rarrow H\modl_k$ and the forgetful functor
$H\contra_k\rarrow H\modl_k$.

 The profinite group cohomology algebra $H^*(H,k)$ is naturally
isomorphic to the cohomology algebra $H^*(\C)$ of the coalgebra
$\C=k(H)$ \cite[Section~4.2]{Pbogom}.
 One can show this by interpreting both the cohomology algebras
in question as the Yoneda Ext-algebras in the related categories
(of $\C$\+comodules $=$ discrete $H$\+modules over~$k$), or simply
noticing that the complex of continuous (i.~e., locally constant)
cochains computing the profinite group cohomology coincides with
the cobar-complex~\cite[Section~1.1]{PV}, \cite[Section~2.1]{Pbogom}
computing the cohomology of~$\C$.

 Now let us assume that $H$ is a pro-$p$-group and $k$ is a field
of characteristic~$p$.
 Then the minimal number of generators of the profinite group $H$
can be computed as the dimension of the $k$\+vector space
$H^1(H,k)=H^1(\C)$.

\begin{cor} \label{fg-pro-p-contra-fully-faithful}
 Let $H$ be a finitely generated pro-$p$-group and $k$ be a field
of characteristic~$p$.
 Then the forgetful functor\/ $H\contra_k\rarrow H\modl_k$ is
fully faithful.

 Moreover, for any dense subgroup $H'\subset H$, the forgetful functor
$H\contra_k\rarrow H'\modl_k$ is fully faithful.
\end{cor}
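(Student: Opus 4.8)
The plan is to deduce both assertions directly from Theorem~\ref{fg-coalgebra-contra-fully-faithful}, applied to the coalgebra $\C=k(H)$, using the identifications of abelian categories $H\contra_k=\C\contra$, \ $H\modl_k=k[H]\modl$, and $H'\modl_k=k[H']\modl$ recalled above. First I would verify the two hypotheses of that theorem. Since $H$ is a pro-$p$-group and $\operatorname{char}k=p$, the trivial representation $k$ is the unique irreducible discrete $H$\+module over~$k$; equivalently, $\C=k(H)$ has a unique irreducible comodule, of dimension~$1$ over~$k$, so $\C$ is conilpotent. Moreover, the space of cogenerators $H^1(\C)=H^1(H,k)$ has dimension equal to the minimal number of (topological) generators of~$H$, which is finite because $H$ is finitely generated. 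Thus $\C$ is a conilpotent coalgebra with $H^1(\C)$ finite-dimensional, and Theorem~\ref{fg-coalgebra-contra-fully-faithful} applies.

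For the first assertion, I would invoke the second (``moreover'') part of the theorem with the dense subring $R=k[H]\subset k[[H]]=\C\spcheck$. Density holds because $\C\spcheck=\varprojlim_{U\subset H}k[H/U]$, the group algebra $k[H]$ surjects onto each finite-dimensional discrete quotient $k[H/U]$, and a subring surjecting onto every such quotient is dense in the projective limit. The forgetful functor $H\contra_k\rarrow H\modl_k$ is precisely the functor $\C\contra\rarrow k[H]\modl$ obtained by restricting the $\C\spcheck$\+module structure along the natural embedding $k[H]\rarrow\C\spcheck$, so its full faithfulness follows at once.

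The second assertion is the same argument carried out with $R=k[H']$ for a dense subgroup $H'\subset H$. The only additional point to confirm is that $k[H']$ is again a dense subring of $k[[H]]$: for each open subgroup $U\subset H$, the image of $H'$ in the finite discrete group $H/U$ is a dense, hence full, subgroup, so $k[H']$ surjects onto every $k[H/U]$ and is therefore dense in $\C\spcheck$ as before. Applying the moreover part of Theorem~\ref{fg-coalgebra-contra-fully-faithful} with this $R$ then yields the full faithfulness of $H\contra_k=\C\contra\rarrow k[H']\modl=H'\modl_k$. I do not anticipate any genuine obstacle in this corollary: all the substance lies in the theorem, and the only things left to check are the conilpotency of $\C$ and the elementary density statements for the group algebras $k[H]$ and $k[H']$ inside~$k[[H]]$.
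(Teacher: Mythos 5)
Your proposal is correct and follows exactly the route the paper takes: verify that $\C=k(H)$ is conilpotent with finite-dimensional $H^1(\C)=H^1(H,k)$ (the paper records this identification just before the corollary), and then apply the ``moreover'' part of Theorem~\ref{fg-coalgebra-contra-fully-faithful} with the dense subring $R=k[H']\subset k[[H]]$. The paper's own proof is a one-line citation of precisely these facts, so your write-up simply spells out the same argument in more detail.
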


\begin{proof}
 Follows from Theorem~\ref{fg-coalgebra-contra-fully-faithful}, as
$k[H']$ is a dense subring in $k[[H]]$.
\end{proof}

\begin{cor} \label{fg-pro-p-contratensor-iso}
 Let $H$ be a finitely generated pro-$p$-group and $k$ be a field
of characteristic~$p$.
 Then for any discrete $H$\+module\/ $\N$ and any $H$\+contramodule\/
$\P$ over~$k$ the natural map\/ $\N\ot_{k[H]}\P\rarrow\N\ocn_{k(H)}\P$
is an isomorphism.

 Moreover, for any dense subgroup $H'\subset H$, the natural map
$\N\ot_{k[H']}\P\rarrow\N\ocn_{k(H)}\nobreak\P$ is an isomorphism.
\end{cor}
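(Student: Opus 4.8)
The plan is to obtain both isomorphisms as the group-theoretic specialization of Corollary~\ref{fg-coalgebra-contratensor-iso} to the coalgebra $\C = k(H)$, in exactly the way Corollary~\ref{fg-pro-p-contra-fully-faithful} was deduced from Theorem~\ref{fg-coalgebra-contra-fully-faithful}. First I would check that $\C = k(H)$ satisfies the two hypotheses of that corollary. Since $H$ is a pro-$p$-group and $\operatorname{char}k = p$, the trivial one-dimensional representation $I_0 = k$ is the only irreducible discrete $H$\+module over~$k$; thus $\C$ has a unique irreducible comodule, of dimension~$1$, which is precisely the conilpotency of~$\C$. By the observation preceding Corollary~\ref{fg-pro-p-contra-fully-faithful}, the space of cogenerators $H^1(\C) = H^1(H,k)$ has dimension equal to the minimal number of topological generators of~$H$, so it is finite-dimensional because $H$ is finitely generated.

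Next I would invoke the identifications $\comodr k(H) = H\discr_k$ and $k(H)\contra = H\contra_k$, together with $\C\spcheck = k[[H]]$; under these, a right $\C$\+comodule is a discrete $H$\+module, a left $\C$\+contramodule is an $H$\+contramodule, and the contratensor product $\N\ocn_\C\P$ becomes $\N\ocn_{k(H)}\P$. The first assertion is then the instance $R = k[H]$, and the second the instance $R = k[H']$, of the ``moreover'' part of Corollary~\ref{fg-coalgebra-contratensor-iso}. So everything reduces to verifying that $k[H]$ and $k[H']$ are dense subrings of the pseudo-compact ring $k[[H]]$.

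For this I would use that $k[[H]] = \varprojlim_U k[H/U]$ carries the projective limit topology, so that the kernels of the projections onto the finite-dimensional discrete quotients $k[H/U]$ form a base of neighborhoods of zero. For any dense subgroup $H'\subset H$ (in particular for $H' = H$ itself), the composite $H'\rarrow H\rarrow H/U$ is surjective for every open normal subgroup $U\subset H$, since the image of a dense subgroup in the finite discrete quotient $H/U$ is the whole of $H/U$; hence $k[H']\rarrow k[H/U]$ is surjective for each~$U$, which is exactly the density of $k[H']$ in $k[[H]]$. With this established, both isomorphisms follow immediately from Corollary~\ref{fg-coalgebra-contratensor-iso}. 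The argument poses no genuine obstacle — it is a routine specialization — and the only point meriting justification is this density of the group algebra subrings, which is immediate from the surjectivity of $H'\rarrow H/U$.
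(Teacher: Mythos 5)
Your proposal is correct and follows exactly the paper's route: the paper's proof is the one-line ``Follows from Corollary~\ref{fg-coalgebra-contratensor-iso}'' (with the density of $k[H']$ in $k[[H]]$ noted in the proof of the preceding Corollary~\ref{fg-pro-p-contra-fully-faithful}), and you have simply filled in the routine verifications of conilpotency, finite-dimensionality of $H^1(\C)$, and density, all of which are accurate.
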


\begin{proof}
 Follows from Corollary~\ref{fg-coalgebra-contratensor-iso}.
\end{proof}

 For the definition of a \emph{uniform pro-$p$-group}, we refer
to~\cite[Definition~4.1]{DSMS}.

\begin{cor} \label{inj-inj-proj-flat-cor}
 Let $H$ be a uniform pro-$p$-group and $k$ be a field of
characteristic~$p$.
 Then \par
\textup{(a)} every injective object of $H\discr_k$ is also
an injective object of $k[[H]]\modl$; \par
\textup{(b)} every projective object of $H\contra_k$ is
a flat $k[[H]]$\+module.
\end{cor}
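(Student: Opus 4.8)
The plan is to obtain both statements from Proposition~\ref{noetherian-injective-flat-prop}, applied to the pseudo-compact ring $\fR=\C\spcheck=k[[H]]$ with $\C=k(H)$ and $\fI=\C_+\spcheck\subset\fR$ its augmentation ideal. A uniform pro-$p$-group is finitely generated, so $\C$ is conilpotent and $H^1(\C)$ is finite-dimensional; hence $\fR$ is separated and complete in the $\fI$\+adic topology, this topology agrees with the pseudo-compact one, and the free $\C$\+contramodules coincide with the free $\fR$\+contramodules $\fR[[X]]$, exactly as discussed above. It therefore remains to verify the two hypotheses of Proposition~\ref{noetherian-injective-flat-prop}: that $\gr_\fI\fR$ is Noetherian and that $\gr_\fI\fI$ is generated by finitely many central elements.

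The heart of the matter is the computation of $\gr_\fI\fR$. Let $d=\dim H$ and fix a minimal set of topological generators $h_1,\dots,h_d$ of $H$; put $b_i=h_i-1\in\fI$. These generate $\fI$ as a closed ideal, so the classes $\xi_i=\bar b_i\in\fI/\fI^2$ generate $\gr_\fI\fR$ as a $k$\+algebra. The key point, which is a form of Lazard's theorem (cf.~\cite[Chapter~7]{DSMS}), is that $\gr_\fI\fR$ is the \emph{commutative} polynomial ring $k[\xi_1,\dots,\xi_d]$. Commutativity of the $\xi_i$ I would deduce from powerfulness: for $p$ odd one has $[H,H]\subseteq H^p$, so $[h_i,h_j]=g^p$ for some $g\in H$, and in characteristic~$p$ one computes $g^p-1=(g-1)^p$ inside the commutative subring generated by~$g$; since $b_ib_j-b_jb_i=([h_i,h_j]-1)\,h_jh_i$, this gives $b_ib_j-b_jb_i\in\fI^p\subseteq\fI^3$, whence $\xi_i\xi_j=\xi_j\xi_i$ (for $p=2$ one uses $[H,H]\subseteq H^4$ and lands in $\fI^4$). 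That the induced surjection $k[\xi_1,\dots,\xi_d]\rarrow\gr_\fI\fR$ is injective is the Poincar\'e--Birkhoff--Witt-type statement, built into the uniform structure, that the ordered monomials in the $b_i$ of total degree~$n$ form a $k$\+basis of $\fI^n/\fI^{n+1}$.

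Granting this, $\gr_\fI\fR\cong k[\xi_1,\dots,\xi_d]$ is Noetherian by the Hilbert basis theorem, while $\gr_\fI\fI=(\xi_1,\dots,\xi_d)$ is generated by finitely many central elements (all elements being central). The hypotheses of Proposition~\ref{noetherian-injective-flat-prop} are thus satisfied; moreover, since $\gr_\fI\fR$ is commutative, the same hypotheses hold for $\fR^\rop$, so the left-module analogue of the Proposition is available alongside the right-module version stated there.

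It remains to read off the two assertions. For part~(a), I identify $H\discr_k=\C\comodl$ with the category of discrete left $\fR$\+modules; the left-module analogue of Proposition~\ref{noetherian-injective-flat-prop}(a) then says that an object injective in $H\discr_k$ stays injective in $k[[H]]\modl$. For part~(b), every projective object of $H\contra_k=\C\contra$ is a direct summand of a free $\C$\+contramodule $\Hom_k(\C,V)$; since $k$ is a field we may take $V=k^{(X)}$, so that $\Hom_k(\C,V)=\fR[[X]]$, which is a flat left $\fR$\+module by Proposition~\ref{noetherian-injective-flat-prop}(b), and a direct summand of a flat module is flat. The only substantial step is the associated-graded computation of the second paragraph; once $\gr_\fI\fR$ is identified as a polynomial ring, both conclusions are formal consequences of the Proposition.
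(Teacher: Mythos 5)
Your proposal is correct and follows the paper's route exactly: reduce to Proposition~\ref{noetherian-injective-flat-prop} for $\fR=k[[H]]$ with $\fI$ the augmentation ideal, the key input being that $\gr_\fI\fR$ is a commutative polynomial ring in finitely many variables. The only difference is that the paper simply cites \cite[Theorem~7.24]{DSMS} for this graded-ring identification, whereas you sketch the commutator/PBW argument behind it (still deferring the linear-independence of ordered monomials to the uniform structure, i.e.\ to the same source).
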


\begin{proof}
 Set $\fR=k[[H]]$, and let $\fI\subset\fR$ be the augmentation ideal.
 According to~\cite[Theorem~7.24]{DSMS}, the graded ring
$\gr_\fI\fR$ is a commutative polynomial ring in a finite number of
variables over~$k$.
 So Proposition~\ref{noetherian-injective-flat-prop} applies.
\end{proof}

\begin{rem}
 For the benefit of a reader having no prior experience with
contramodules, let us revisit once again the concepts that we have
defined and discussed so far.
 Returning to the discussion in Sections~\ref{pseudo-and-contra-subsecn}
and~\ref{comodules-contramodules-subsecn},
for any coalgebra $\C$ over a field~$k$, the forgetful functor
$\C\spcheck\pscomp\rarrow\C\spcheck\modl$ from the category of
pseudo-compact left $\C\spcheck$\+modules to the category of
abstract left $\C\spcheck$\+modules factorizes naturally through
the category of left $\C$\+contramodules $\C\contra$.
 In particular, for any profinite group $H$ the forgetful functor
$H\pscomp_k\rarrow H\modl_k$ from the category of pseudo-compact
$H$\+modules to the category of abstract $H$\+modules over~$k$
factorizes naturally through the category of $H$\+contramodules
$H\contra_k$,
$$
 H\pscomp_k\lrarrow H\contra_k\lrarrow H\modl_k.
$$

 What is the difference between pseudo-compact modules and
contramodules?
 This question is best answered by specializing to the case of
the trivial group $H=\{e\}$ (corresponding to the trivial coalgebra
$\C=k$).
 In this case, the category of pseudo-compact $H$\+modules is just
the category of pseudo-compact $k$\+vector spaces, while
the category of $H$\+contramodules is the category of discrete
$k$\+vector spaces.
 So the functor $H\pscomp_k\rarrow H\modl_k$ forgets the topology
of a pseudo-compact vector space, leaving only the discrete
vector space structure; while the functor $H\contra_k\rarrow
H\modl_k$ is an isomorphism of categories in this case.

 What is the difference between contramodules and abstract modules?
 As it was mentioned above, for any coalgebra $\C$ over~$k$,
a $\C$\+contramodule structure on a $k$\+vector space $\P$ is defined
by a $k$\+linear map $\Hom_k(\C,\P)\rarrow\P$, while
a $\C\spcheck$\+module structure is given by a $k$\+linear map
$\C\spcheck\ot_k\P\rarrow\P$.
 The forgetful functor $\C\contra\rarrow\C\spcheck\modl$ takes
the restriction of the contraaction map $\Hom_k(\C,\P)\rarrow\P$ to
the subspace of all $k$\+linear maps of finite rank
$\C\spcheck\ot_k\P\subset\Hom_k(\C,\P)$.
 In particular, for any profinite group~$H$, an $H$\+contramodule
structure on a $k$\+vector space $\P$ is defined by a $k$\+linear map
$\P[[H]]\rarrow\P$ from the space $\P[[H]]$ of finitely additive
$\P$\+valued measures defined on open-closed subsets of~$H$.
 The forgetful functor $H\contra_k\rarrow H\modl_k$ takes
the restriction of the contraaction map $\P[[H]]\rarrow\P$ to
the subspace $\P[H]\subset\P[[H]]$ of all measures supported in
a finite set of points in~$H$.

 What is the contratensor product of comodules and contramodules?
 Once again, considering the case of $H=\{e\}$ and $\C=k$ is
instuctive.
 In this case, both the discrete $H$\+modules and
the $H$\+contramodules over~$k$ are just the discrete $k$\+vector
spaces, and their contratensor product~$\ocn_{k(H)}$ is just
the conventional tensor product~$\ot_k$ of discrete vector spaces
over~$k$.
 There are no pseudo-compact vector spaces here at all, and
no topological completion is involved.

 So, what is the difference between the tensor product $\N\ot_{k[H]}\P$
and the contratensor product $\N\ocn_{k(H)}\P$~?
 It is that the contratensor product is a \emph{smaller}
vector space than the tensor product; indeed, generally speaking,
$\N\ocn_{k(H)}\P$ is a (discrete) quotient vector space of
a (discrete) $k$\+vector space $\N\ot_{k[H]}\P$.

 Let us recall that the contratensor product $\N\ocn_\C\P$ of
a right comodule $\N$ and a left contramodule $\P$ over a coalgebra $\C$
is defined as the quotient space of $\N\ot_k\P$ by the image of
a natural $k$\+linear map coming from the vector space
$\N\ot_k\Hom_k(\C,\P)$, while the tensor product $\N\ot_{\C\spcheck}\P$
of the $\C\spcheck$\+modules $\N$ and $\P$ is defined as the quotient
space of $\N\ot_k\P$ by the image of a natural $k$\+linear map coming
from the vector space $\N\ot_k\C\spcheck\ot_k\P$.
 Now, of course, $\N\ot_k\C\spcheck\ot_k\P$ is a subspace in
$\N\ot_k\Hom_k(\C,\P)$, and the latter map is the restriction of
the former one onto this subspace.
 Informally speaking, as the structure of a $\C$\+contramodule on $\P$
is richer than that of a $\C\spcheck$\+module, one can use it to
construct a deeper (discrete) quotient of the $k$\+tensor product
space $\N\ot_k\P$ than the structure of $\C\spcheck$\+module
on $\P$ allows.

 Similarly, when $\C=k(H)$ for a profinite group $H$ (so
$\C\spcheck=k[[H]]$), the tensor product $\N\ot_{k[[H]]}\P$, generally
speaking, is a quotient space of the tensor product $\N\ot_{k[H]}\P$.
 Furthermore, when $H'\subset H$ is a dense subgroup, the tensor
product $\N\ot_{k[H]}\P$ is, generally speaking, a quotient space of
the tensor product $\N\ot_{k[H']}\P$.
 Thus, for a discrete $H$\+module $\N$ and an $H$\+contramodule $\P$
over~$k$, we have a sequence of surjective maps of discrete
$k$\+vector spaces
$$
 \N\ot_{k[H']}\P\lrarrow\N\ot_{k[H]}\P\lrarrow\N\ot_{k[[H]]}\P
 \lrarrow\N\ocn_{k(H)}\P.
$$
 Corollary~\ref{fg-pro-p-contratensor-iso} claims that, when $H$ is
a finitely generated pro-$p$-group and $k$ is a field of
characteristic~$p$, all these maps are isomorphisms.
\end{rem}

\Section{Smooth $G$-Modules and $G$-Contramodules} \label{G-section}

\subsection{Contramodules over a locally profinite group}
\label{G-contramodules-subsecn}
 Let $G$ be a locally profinite group and $k$~be a field.
 A \emph{$G$\+module} over~$k$ is a $k$\+vector space endowed with
an action of $G$ (viewed as an abstract group).
 A $G$\+module $\bM$ over~$k$ is called \emph{smooth} if
the stabilizer of every element $x\in\bM$ is an open subgroup in~$G$.
 The abelian category of $G$\+modules over~$k$ is denoted by $G\modl_k$
and the abelian category of smooth $G$\+modules is denoted by
$G\smooth_k\subset G\modl_k$.

 Both the infinite direct sums and infinite products exist in
the abelian category $G\smooth_k$.
 The embedding functor $G\smooth_k\rarrow G\modl_k$ preserves
the infinite direct sums (but not the infinite products).
 In fact, the infinite product $\bM$ of a family of objects
$\bM_\alpha\in G\smooth_k$ is the submodule of all $G$\+smooth vectors
$\bM\subset M$ in their product $M=\prod_\alpha\bM_\alpha$ as objects
in $G\modl_k$.

 In order to define contramodules over locally profinite groups, we
will need the following extension of the discussion of finitely
additive measures on profinite sets in
Section~\ref{profinite-groups-subsecn} to the case of
locally profinite sets.

 Let $X$ be a locally profinite set (that is a locally compact totally
disconnected topological space).
 For any $k$\+vector space $V$, we denote by $V(X)$ the $k$\+vector
space of all compactly supported locally constant functions
$X\rarrow V$.
 We also denote by $V[[X]]$ the $k$\+vector space of all compactly
supported finitely additive $V$\+valued measures defined on all
the open-closed subsets in~$X$.
 Then for any two locally profinite sets $X$ and $Y$ there are
natural isomorphisms
$$
 V(X)\simeq V\ot_kk(X), \qquad k(X\times Y)\simeq k(X)\ot_kk(Y)
$$
and natural injective maps
$$
 V[[X]]\ot_k k[[Y]]\lrarrow V[[X\times Y]]\lrarrow V[[X]][[Y]].
$$
 In particular, there is a natural injective map $V\ot_kk[[X]]
\rarrow V[[X]]$.
 For any continuous map of locally profinite sets $X\rarrow Y$,
there is a natural pushforward map $V[[X]]\rarrow V[[Y]]$
(cf.~\cite[Section~E.1.1]{Psemi} and~\cite[Section~1.8]{Prev}).

 A \emph{$G$\+contramodule over~$k$} is a $k$\+vector space $\bP$
endowed with a $k$\+linear \emph{$G$\+contraaction} map
$\pi_\bP\:\bP[[G]]\rarrow\bP$ satisfying
the \emph{contraassociativity} and \emph{contraunitality} axioms.
 Specifically, the two maps $\bP[[G\times G]]\rightrightarrows
\bP[[G]]$, one of them provided by the pushforward of measures
with respect to the multiplication map $G\times G\rarrow G$, and
the other one constructed as the composition of the natural
injection $\bP[[G\times G]]\rarrow\bP[[G]][[G]]$ with the map
$\bP[[G]][[G]]\rarrow\bP[[G]]$ induced by the contraaction
map~$\pi_\bP$, should have equal compositions with
the contraaction map~$\pi_\bP$,
$$
 \bP[[G\times G]]\.\rightrightarrows\.\bP[[G]]\rarrow\bP.
$$
 Besides, the composition of the map $\bP\rarrow\bP[[G]]$ assigning
to a vector $x\in\bP$ the point measure on $G$ concentrated at
the unit element $e\in G$ and taking the value~$x$ on
the neighborhoods of~$e$ with the contraaction map~$\pi_\bP$ should be
equal to the identity map~$\operatorname{id}_\bP$,
$$
 \bP\lrarrow\bP[[G]]\lrarrow\bP.
$$

 $G$\+contramodules over~$k$ form an abelian category $G\contra_k$.
 For every element $g\in G$, one can consider the point measure at~$g$
with the value $1\in k$.
 This defines an injective map $k[G]\rarrow k[[G]]$.
 The composition of $k$\+linear maps $\bP\ot_kk[G]\rarrow
\bP\ot_kk[[G]]\rarrow\bP[[G]]\rarrow\bP$ provides a map $\bP\ot_kk[G]
\rarrow\bP$ defining the underlying $G$\+module structure on
a $G$\+contramodule~$\bP$.
 Hence we obtain an exact and faithful forgetful functor
$G\contra_k\rarrow G\modl_k$.

 Both the infinite direct sums and infinite products exist in
the abelian category $G\contra_k$.
 The forgetful functor $G\contra_k\rarrow G\modl_k$ preserves
the infinite products (but not the infinite direct sums).
 We refer to~\cite[Section~3.1.2]{Psemi} and~\cite[Section~2.6]{Prev}
for the construction of the infinite direct sums in the categories
of semicontramodules over semialgebras, which includes
the category of $G$\+contramodules as a particular
case~\cite[Example~2.6]{Prev}.

 For any smooth $G$\+module $\bN$ over $k$ and any $k$\+vector space
$V$, there is a natural structure of $G$\+contramodule on
the $k$\+vector space $\Hom_k(\bN,V)$.
 We refer to~\cite[Section~E.1.4]{Psemi} for the construction of
the contraaction map $\pi\:\Hom_k(\bN,V)[[G]]\rarrow\Hom_k(\bN,V)$
and to~\cite[Section~1.8]{Prev} for the discussion of its intuitive
meaning as a certain integration operation.

\begin{cor} \label{locally-fg-pro-p-fully-faithful}
 Let $G$ be a locally profinite group with a compact open subgroup
$H\subset G$.
 Assume that $H$ is a finitely generated pro-$p$-group and $k$~is
a field of characteristic~$p$.
 Then the forgetful functor $G\contra_k\rarrow G\modl_k$ is fully
faithful.

 Moreover, for any dense subgroup $G'\subset G$, the forgetful functor
$G\contra_k\rarrow G'\modl_k$ is fully faithful. 
\end{cor}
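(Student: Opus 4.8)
The plan is to reduce everything to the compact open subgroup $H$, that is to Corollary~\ref{fg-pro-p-contra-fully-faithful}, by expressing the $G$\+contraaction in terms of the $H$\+contraaction together with the group translations. The forgetful functor is faithful for the trivial reason that it leaves the underlying $k$\+linear maps unchanged, so the whole content is fullness: given two $G$\+contramodules $\bP$ and $\bQ$ and a $k[G]$\+linear map $f\:\bP\rarrow\bQ$, I must show that $f$ commutes with the contraaction maps $\pi_\bP\:\bP[[G]]\rarrow\bP$ and $\pi_\bQ\:\bQ[[G]]\rarrow\bQ$. The first step is to restrict to $H$. Since $H$ is compact open (hence open and closed) in $G$, there is a natural inclusion $\bP[[H]]\rarrow\bP[[G]]$ given by extending measures by zero, and $\pi_\bP$ restricts on it to an $H$\+contraaction; this makes $\bP$ an $H$\+contramodule whose underlying $H$\+module is the restriction of its underlying $G$\+module (and likewise for $\bQ$). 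As $f$ is in particular $k[H]$\+linear, Corollary~\ref{fg-pro-p-contra-fully-faithful} applies and shows that $f$ is a morphism of $H$\+contramodules, i.e.\ that $f$ commutes with the $H$\+contraactions.

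The key step is to recover the full $G$\+contraaction from the $H$\+contraaction and the translations. Because $H$ is compact and open, $G$ is the disjoint union of the right cosets $Hg_i$, each of which is compact and open, and any compactly supported measure meets only finitely many of them; hence $\bP[[G]]=\bigoplus_i\bP[[Hg_i]]$, and pushforward along the right translation $x\mapsto xg_i$ identifies $\bP[[H]]$ with $\bP[[Hg_i]]$. Writing $R_g\:\bP[[G]]\rarrow\bP[[G]]$ for the pushforward of measures along $x\mapsto xg$, the contraassociativity axiom, applied to the measures on $G\times G$ that are a point measure in one of the two variables and an arbitrary measure in the other, yields the equivariance identity $\pi_\bP(R_g\nu)=g\cdot\pi_\bP(\nu)$, where $g\cdot({-})$ denotes the underlying $G$\+module action. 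Consequently, decomposing an arbitrary $\mu\in\bP[[G]]$ as a finite sum $\mu=\sum_i R_{g_i}\nu_i$ with $\nu_i\in\bP[[H]]$, one obtains the formula $\pi_\bP(\mu)=\sum_i g_i\cdot\pi_\bP(\nu_i)$, whose right-hand side involves only the $H$\+contraaction and the $G$\+action.

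It then remains to combine the two ingredients. Applying $f$ to the formula above and using that $f$ commutes with the $G$\+action (being $k[G]$\+linear) and with the $H$\+contraaction (by the first step), while the induced map $f[[G]]\:\bP[[G]]\rarrow\bQ[[G]]$ respects both the coset decomposition and the translations $R_{g_i}$, I obtain $f(\pi_\bP(\mu))=\pi_\bQ(f[[G]](\mu))$ for every $\mu$, which is exactly the assertion that $f$ is a morphism of $G$\+contramodules. For the dense subgroup version I observe that each coset $Hg_i$ is open, so by density of $G'$ the representatives $g_i$ may be chosen in $G'$; moreover $H'=G'\cap H$ is a dense subgroup of the open subgroup $H$, so the moreover-part of Corollary~\ref{fg-pro-p-contra-fully-faithful} shows that a $k[H']$\+linear map between $H$\+contramodules is automatically $H$\+contralinear. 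With these choices the previous argument applies verbatim to any $k[G']$\+linear $f$, and the first assertion is the special case $G'=G$.

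The main obstacle is the key step of the second paragraph: extracting the translation-equivariance $\pi_\bP(R_g\nu)=g\cdot\pi_\bP(\nu)$ from contraassociativity. This requires keeping the inner and outer factors in the isomorphism $\bP[[G\times G]]\simeq\bP[[G]][[G]]$ straight and checking that it is right translation, rather than left translation, that intertwines with the module action, as well as verifying that the coset decomposition of compactly supported measures is compatible with the contraaction. Once this compatibility is in hand, the reduction to the already established $H$\+case is immediate.
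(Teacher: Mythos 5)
Your proof is correct and takes essentially the same route as the paper's: first reduce to the compact open subgroup by applying Corollary~\ref{fg-pro-p-contra-fully-faithful} to the dense subgroup $H'=H\cap G'$ of $H$, then recover the $G$\+contraaction from the $H$\+contraaction together with the $G'$\+translations via contraassociativity. The paper merely packages your coset decomposition $\bP[[G]]=\bigoplus_i\bP[[Hg_i]]$ with representatives $g_i\in G'$ as the surjectivity of the composition $\bP[[H]]\ot_kk[G']\rarrow\bP[[G]]$, so the difference is purely presentational.
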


\begin{proof}
 Let $\bP$ and $\bQ$ be two $G$\+contramodules over~$k$, and let
$f\:\bP\rarrow\bQ$ be a $k[G']$\+module morphism.
 Set $H'=H\cap G'$.
 Then $f$~is, in particular, a $k[H']$\+module morphism.
 By Corollary~\ref{fg-pro-p-contra-fully-faithful}, it follows that
$f$~is a morphism of $H$\+contramodules over~$k$.
 Finally, we notice that the composition
$$
 \bP[[H]]\ot_kk[G']\lrarrow\bP[[H]]\ot_kk[[G]]\lrarrow
 \bP[[H\times G]]\lrarrow\bP[[G]]
$$
of the natural injective maps and the pushforward map is surjective.
 As $f$~is a morphism of $H$\+contramodules and a morphism of
$G'$\+modules, we can conclude that it is a morphism of
$G$\+contramodules over~$k$.
\end{proof}

\subsection{Contratensor product over a locally profinite group}
\label{contratensor-loc-profinite-subsecn}
 Let $G$ be locally profinite group and $k$~be a field.
 The \emph{contratensor product} $\bN\Ocn_{k,G}\nobreak\bP$ of
a smooth $G$\+module $\bN$ and a $G$\+contramodule $\bP$ over~$k$ is
a $k$\+vector space constructed as the cokernel of the difference of
two natural maps
\begin{equation} \label{G-contratensor-pair-of-maps}
 \bN\ot_k\bP[[G]]\.\rightrightarrows\.\bN\ot_k\bP.
\end{equation}
 The first map is simply induced by the contraaction map $\pi_\bP\:
\bP[[G]]\rarrow\bP$.
 The second map is defined by the formula
$$
 x\ot\mu\longmapsto\int_G xg^{-1}\ot d\mu_g
$$
for all $x\in\bN$ and $\mu\in\bP[[G]]$.
 Here $g\longmapsto xg^{-1}=gx$ is a smooth $\bN$\+valued function
on $G$, which can be integrated with the compactly supported
$\bP$\+valued measure~$\mu$ on open-closed subsets of $G$,
resulting in an element of $\bN\ot_k\bP$.

 For any smooth $G$\+module $\bN$, \ $G$\+contramodule $\bP$, and
a vector space $V$ over~$k$ there is a natural isomorphism of
$k$\+vector spaces
\begin{equation} \label{G-contratensor-hom-V}
 \Hom_k(\bN\Ocn_{k,G}\bP,\>V)\simeq\Hom_k^G(\bP,\Hom_k(\bN,V)),
\end{equation}
where $\Hom_k^G(\bP,\bQ)$ denotes the space of all morphisms
$\bP\rarrow\bQ$ is the category $G\contra_k$.
 Clearly, there is also a natural surjective morphism
$$
 \bN\ot_{k[G]}\bP\lrarrow\bN\Ocn_{k,G}\bP.
$$

 When $H$ is a profinite group, the contratensor product
$\N\Ocn_{k,H}\P$ of a discrete $H$\+module $\N$ and an $H$\+contramodule
$\P$ over~$k$ is nothing but their contratensor product
$\N\ocn_{k(H)}\P$ as a right comodule and a left contramodule over
the coalgebra $\C=k(H)$ (as defined in
Sections~\ref{cotensor-contratensor-subsecn}
and~\ref{profinite-groups-subsecn}).
 Indeed, one has $\P[[H]]\simeq\Hom_k(\C,\P)$, and this isomorphism
identifies the pair of maps $\N\ot_k\P[[H]]\rightrightarrows\N\ot_k\P$\,
\eqref{G-contratensor-pair-of-maps} with the pair of maps
$\N\ot_k\Hom_k(\C,\P)\rightrightarrows\N\ot_k\P$\,
\eqref{C-contratensor-pair-of-maps}.
 The following result is a generalization of
Corollary~\ref{fg-pro-p-contratensor-iso} to locally profinite groups.

\begin{cor} \label{locally-fg-pro-p-contratensor-iso}
 Let $G$ be a locally profinite group with a compact open subgroup
$H\subset G$.
 Assume that $H$ is a finitely generated pro-$p$-group and $k$~is
a field of characteristic~$p$.
 Then for any smooth $G$\+module\/ $\bN$ and any $G$\+contramodule\/
$\bP$ over~$k$ the natural map\/ $\bN\ot_{k[G]}\bP\rarrow
\bN\Ocn_{k,G}\bP$ is an isomorphism.

 Moreover, for any dense subgroup $G'\subset G$, the natural map\/
$\bN\ot_{k[G']}\bP\rarrow\bN\Ocn_{k,G}\nobreak\bP$ is an isomorphism.
\end{cor}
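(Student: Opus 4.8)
The plan is to mirror the proof of Corollary~\ref{fg-coalgebra-contratensor-iso}: I would dualize the natural surjection by applying the functor $\Hom_k({-},V)$ for an arbitrary $k$\+vector space $V$, recognize the resulting map as the one induced on Hom spaces by the forgetful functor of Corollary~\ref{locally-fg-pro-p-fully-faithful}, and invoke the full faithfulness of that functor. Since a $k$\+linear map is an isomorphism precisely when $\Hom_k({-},V)$ carries it to an isomorphism for every~$V$ (indeed, already for $V=k$, as dualization reflects isomorphisms of vector spaces), this suffices.

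Concretely, applying $\Hom_k({-},V)$ to the natural map $\bN\ot_{k[G]}\bP\rarrow\bN\Ocn_{k,G}\bP$ produces a map which, by the contratensor adjunction recorded above and the usual tensor--hom adjunction over the ring $k[G]$, is identified with
\[
 \Hom_k^G(\bP,\.\Hom_k(\bN,V))\lrarrow\Hom_{k[G]}(\bP,\.\Hom_k(\bN,V)),
\]
where $\Hom_k(\bN,V)$ carries its natural $G$\+contramodule structure. This is exactly restriction along the forgetful functor $G\contra_k\rarrow G\modl_k$ applied to morphisms out of~$\bP$; by the first assertion of Corollary~\ref{locally-fg-pro-p-fully-faithful} that functor is fully faithful, so the displayed map is bijective, and hence the original map is an isomorphism. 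For the ``moreover'' part I would run the identical argument with $k[G']$ in place of $k[G]$, using the tensor--hom adjunction over $k[G']$ together with the second assertion of Corollary~\ref{locally-fg-pro-p-fully-faithful}, that $G\contra_k\rarrow G'\modl_k$ is fully faithful.

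The one genuinely nonformal point is to check that these identifications fit together correctly. Two compatibilities are needed: first, that the $k[G]$\+module (respectively $k[G']$\+module) structure on $\Hom_k(\bN,V)$ arising from the tensor--hom adjunction coincides with the one underlying its natural $G$\+contramodule structure; and second, that the natural surjection $\bN\ot_{k[G]}\bP\rarrow\bN\Ocn_{k,G}\bP$ really dualizes to the forgetful restriction map above and not to a twist of it. Both amount to unwinding the second of the two maps whose cokernel defines $\bN\Ocn_{k,G}\bP$, namely the integration map $x\ot\mu\mapsto\int_G xg^{-1}\ot d\mu_g$, against the recipe via point measures that defines the underlying $G$\+module structure of a $G$\+contramodule. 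This is the same bookkeeping used to set up the adjunction isomorphism for $\Ocn_{k,G}$, so I expect it to be routine---but it is where the actual content lies; the remainder is formal manipulation of adjunctions.
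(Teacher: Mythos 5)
Your proposal is correct and is essentially the paper's own argument: the paper deduces this corollary from Corollary~\ref{locally-fg-pro-p-fully-faithful} ``in the same way as Corollary~\ref{fg-coalgebra-contratensor-iso} is deduced from Theorem~\ref{fg-coalgebra-contra-fully-faithful},'' i.e., by applying $\Hom_k({-},V)$, identifying the dualized map via the contratensor and tensor--hom adjunctions with the restriction map induced by the forgetful functor, and invoking its full faithfulness. The compatibility checks you flag are indeed the (routine) content implicit in the adjunction isomorphisms the paper records before the statement.
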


\begin{proof}
 This is deduced from Corollary~\ref{locally-fg-pro-p-fully-faithful}
in the same way as Corollary~\ref{fg-coalgebra-contratensor-iso}
is deduced from Theorem~\ref{fg-coalgebra-contra-fully-faithful}.
\end{proof}

\subsection{Smooth module-contramodule correspondence}
\label{smooth-contra-subsecn}
 Let $G'$ and $G''$ be two locally profinite groups, and
let $\bK$ be a smooth $(G'\times\nobreak G'')$\+module over~$k$.
 Then the functor
$$
 \bK\Ocn_{k,G''}{-}\:G''\contra_k\lrarrow G'\smooth_k
$$
taking a $G''$\+contramodule $\bP$ over~$k$ to the smooth $G'$\+module
$\bK\Ocn_{k,G''}\bP$ is left adjoint to the functor
$$
 \Hom_{k[G']}(\bK,{-})\:G'\smooth_k\lrarrow G''\contra_k
$$
taking a smooth $G'$\+module $\bM$ over~$k$ to the $G''$\+contramodule
$\Hom_{k[G']}(\bK,\bM)$.

 The smooth $(G\times G)$\+module $\bS=k(G)$ of compactly supported
locally constant $k$\+valued functions on a locally profinite group $G$
plays a central role.
 We denote the related pair of adjoint functors by
\begin{equation} \label{Phi-G}
 \Phi_G=\bS\Ocn_{k,G}{-}\:G\contra_k\lrarrow G\smooth_k
\end{equation}
and
\begin{equation} \label{Psi-G}
 \Psi_G=\Hom_{k[G]}(\bS,{-})\:G\smooth_k\lrarrow G\contra_k.
\end{equation}

 As it was mentioned and briefly discussed in
Sections~\ref{loc-profinite-group-semialgebra-subsecn}
and~\ref{hom-V-subsecn} of the introduction,
choosing a compact open subgroup $H\subset G$ endows
the smooth $(G\times G)$\+module $\bS$ viewed as a discrete
$(H\times H)$\+module with the structure of a semiassociative,
semiunital \emph{semialgebra} over the coalgebra $\C=k(H)$.
 The category of smooth $G$\+modules over~$k$ is then identified
with the category of (left or right) \emph{$\bS$\+semimodules},
$G\smooth_k=\bS\simodl=\simodr\bS$, and the category of
$G$\+contramodules over~$k$ is isomorphic to the category of
(left or right) \emph{$\bS$\+semicontramodules},
$G\contra_k=\bS\sicntr$ \cite[Sections~E.1.2\+-E.1.3]{Psemi},
\cite[Example~2.6]{Prev}.

 Given a smooth $G$\+module $\bN$ and a $G$\+contramodule $\bP$
over~$k$, their contratensor product $\bN\Ocn_{k,G}\bP$ is nothing
but their contratensor product $\bN\Ocn_\bS\bP$ as a right
$\bS$\+semimodule and a left $\bS$\+semicontramodule, in the sense
of~\cite[Sections~0.3.7 and~6.1.1]{Psemi}.
 In particular, the pair of adjoint functors $\Phi_G$ and $\Psi_G$
is isomorphic to the pair of adjoint functors $\Phi_\bS$ and
$\Psi_\bS$ of~\cite[Sections~0.3.7 and~6.1.4\+-6.2]{Psemi}.

 For a profinite group $H$ and the related coalgebra $\C=k(H)$,
the functors $\Phi_H$ and $\Psi_H$ agree with the functors $\Phi_\C$
and $\Psi_\C$ which were defined and discussed in
Section~\ref{cotensor-contratensor-subsecn}.
 This was essentially explained in
Section~\ref{contratensor-loc-profinite-subsecn}.
 Let us emphasize that the the functors $\Phi$ and $\Psi$ are
\emph{covariant}, and no passage to the dual vector space is involved
in their construction.

 In particular, according to the discussion in
Section~\ref{nonnatural-subsecn} of the introduction, when
the proorder of the profinite group $H$ is not divisible by
the characteristic of~$k$ (e.~g., $\operatorname{char} k=0$),
the functor $\Phi_H$ assigns to an $H$\+contramodule $\P$ its
smooth $H$\+submodule of all $H$\+smooth vectors in~$\P$.
 Furthermore, following the brief discussions in
Sections~\ref{coalgebra-duality}
and~\ref{cotensor-contratensor-subsecn}, when a $k(H)$\+comodule
$\M$ is \emph{quasi-finitely cogenerated} (or equivalently, in the more
traditional terminology, a smooth $H$\+module $\M$ is
\emph{admissible}), the $H$\+contramodule $\Psi_H(\M)$ is associated
with a certain pseudo-compact $k[[H]]$\+module (see
Lemmas~\ref{psi-pseudocompact} and~\ref{admissible-quasi-finite}(a)
for the details; cf.\ Proposition~\ref{contra-admissible-duality}).

\begin{ex}
 Generally speaking, when the characteristic of~$k$ divides
the proorder of the group $H$, the functor $\Phi_H$ is quite different
from the functor of maximal smooth $H$\+submodule.
 In particular, the functor $\Phi_H$ is right exact, while the functor
of maximal smooth $H$\+submodule is left exact.

 The following example is instructive.
 Let $H=\Z_p$ be the additive group of $p$\+adic integers and $k$~be
a field of characteristic~$p$.
 Then the ring $k[[H]]=\varprojlim_{n\ge1} k[z]/(z^{p^n}-\nobreak1)$ is
isomorphic to the ring of formal power series $k[[x]]$ in the variable
$x=z-1$.
 The category of smooth $H$\+modules over~$k$ is equivalent to
the category of $k[[x]]$\+modules with a locally nilpotent action
of~$x$; while the category of $H$\+contramodules over~$k$ is
equivalent to the category of $k[[x]]$\+contramodules, which means
$k[[x]]$\+modules with an $x$\+power infinite summation operation,
as described in~\cite[Section~1.3]{Prev}
(see also~\cite[Sections~1.5\+-1.6]{Prev} for a discussion of this
category's properties).

 The smooth $H$\+module $k(H)$ corresponds to the Pr\"ufer
$k[[x]]$\+module $k((x))/xk[[x]]$, and the functor $\Phi_H$ assigns to
an $H$\+contramodule $\P$ over~$k$ the smooth $H$\+module
$k(H)\ot_{k[[H]]}\P=k((x))/xk[[x]]\ot_{k[[x]]}\P$.
 This is quite different from the functor of the maximal smooth
$H$\+submodule (\,$=$~maximal $k[[x]]$\+submodule with a locally
nilpotent action of~$x$).
 For example, the projective $H$\+contramodules $\P$ over~$k$ correspond
to the projective (\,$=$~free) $k[[x]]$\+contramodules $V[[x]]$, where
$V$ ranges over the $k$\+vector spaces.
 There are no vectors with a locally nilpotent action of~$x$ in
$\P=V[[x]]$, so the maximal smooth $H$\+submodule in $\P$ vanishes;
while $\Phi(\P)=k(H)\ot_k V=k((x))/xk[[x]]\ot_k V$ is a quite nonzero
smooth $H$\+module.
\end{ex}

 Let $G'\subset G$ be an open subgroup.
 Then the functors $\Phi$ and $\Psi$ related to the groups $G$ and
$G'$ form commutative diagrams with the forgetful functors
$G\smooth_k\rarrow G'\smooth_k$ and $G\contra_k\rarrow G'\contra_k$,
\begin{equation} \label{G-G-prime-Phi-Psi}
\begin{diagram}
\node{\Psi_G\:G\smooth_k}\arrow{s}\arrow{e}
\node{G\contra_k}\arrow{s} \\
\node{\Psi_{G'}\:G'\smooth_k}\arrow{e}
\node{G'\contra_k}
\end{diagram}
\ \ 
\begin{diagram}
\node{\Phi_G\:G\contra_k}\arrow{s}\arrow{e}
\node{G\smooth_k}\arrow{s} \\
\node{\Phi_{G'}\:G'\contra_k}\arrow{e}
\node{G'\smooth_k}
\end{diagram}
\end{equation}
 Commutativity of the leftmost diagram follows immediately from
the fact that the $G$\+module $\bS_G=k(G)$ can be obtained by applying
the compactly supported smooth induction functor to
the $G'$\+module $\bS_{G'}=k(G')$, that is
$k(G)=\operatorname{ind}_{G'}^G k(G')$.
 Commutativity of the rightmost diagram can be obtained, e.~g.,
as a particular case of the results of~\cite[Section~8.1.2]{Psemi}.

\subsection{Weakly compactly injective and weakly compactly
projective objects}
 Let $H_1\subset H_2$ be an open subgroup in a profinite group.
 Then one can easily see that the forgetful functor
$H_2\discr_k\rarrow H_1\discr_k$ preserves injectives (and in fact
takes cofree $k(H_2)$\+comodules to cofree $k(H_1)$\+comodules).
 Similarly, the forgetful functor $H_2\contra_k\rarrow H_1\contra_k$
preserves projectives (and in fact takes free
$k(H_2)$\+contramodules to free $k(H_1)$\+contramodules).

 Given a locally profinite group $G$, let us call a smooth
$G$\+module $\bM$ over~$k$ \emph{weakly compactly injective} if there
exists a compact open subgroup $H\subset G$ such that $\bM$ is
injective as an object of $H\discr_k$.
 Similarly, let us call a $G$\+contramodule $\bP$ over~$k$
\emph{weakly compactly projective} if there exists a compact open
subgroup $H\subset G$ such that $\bP$ is projective as an object of
$H\contra_k$.

\begin{prop} \label{underived-semico-semicontra-prop}
 For any locally profinite group $G$ and a field~$k$, the functors\/
$\Psi_G$ and\/ $\Phi_G$ restrict to mutually inverse equivalences
between the full subcategories of weakly compactly injective objects in
$G\smooth_k$ and weakly compactly projective objects in $G\contra_k$.
\end{prop}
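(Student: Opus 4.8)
The plan is to reduce the statement to the underived comodule--contramodule correspondence for the coalgebra $\C = k(H)$, pulled back along the forgetful functors to the $H$\+level, choosing for each object a compact open subgroup $H \subset G$ that witnesses its weak compact (in)jectivity. First I would specialize the commutative diagram~\eqref{G-G-prime-Phi-Psi} to the open subgroup $G' = H$. For the profinite group $H$ one has $H\smooth_k = H\discr_k$ and $\bS_H = k(H) = \C$, so $\Phi_H = \C\ocn_\C{-}$ and $\Psi_H = \Hom_\C(\C,{-})$ are precisely the co-contra correspondence functors for~$\C$. By the equivalence between injective $\C$\+comodules and projective $\C$\+contramodules recalled in the introduction, $\Phi_H$ and $\Psi_H$ restrict to mutually inverse equivalences between the injective objects of $H\discr_k$ and the projective objects of $H\contra_k$; in particular the unit and counit of the adjunction $\Phi_H \dashv \Psi_H$ are isomorphisms on these two subcategories.

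Next, writing $\rho$ for either of the two forgetful functors $G\smooth_k \to H\discr_k$ and $G\contra_k \to H\contra_k$, the commutativity of~\eqref{G-G-prime-Phi-Psi} (with $G' = H$) furnishes natural isomorphisms $\rho\,\Psi_G \cong \Psi_H\,\rho$ and $\rho\,\Phi_G \cong \Phi_H\,\rho$. From the first of these I would deduce that if $\bM$ is injective as an $H$\+module, then the underlying $H$\+contramodule $\rho(\Psi_G(\bM)) \cong \Psi_H(\rho(\bM))$ is projective, so $\Psi_G$ sends weakly compactly injective objects to weakly compactly projective ones, with the same witnessing subgroup~$H$. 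Dually, $\Phi_G$ sends weakly compactly projective objects to weakly compactly injective ones.

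It remains to check that the adjunction unit $\bP \to \Psi_G\Phi_G(\bP)$ and counit $\Phi_G\Psi_G(\bM) \to \bM$ of the pair $\Phi_G \dashv \Psi_G$ are isomorphisms on the respective subcategories; granting (as discussed below) that $\rho$ intertwines the $G$\+ and $H$\+level units and counits, this follows at once. Indeed, $\rho$ does not change the underlying $k$\+vector space and is therefore conservative. For $\bM$ weakly compactly injective with witness $H$, the counit $\Phi_G\Psi_G(\bM) \to \bM$ becomes, after applying $\rho$, the counit $\Phi_H\Psi_H(\rho(\bM)) \to \rho(\bM)$, which is an isomorphism because $\rho(\bM)$ is injective; by conservativity the original counit is an isomorphism. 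The case of the unit on a weakly compactly projective $\bP$ is dual. Together with the previous paragraph, this exhibits $\Phi_G$ and $\Psi_G$ as mutually inverse equivalences between the two subcategories.

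The step I expect to be the main obstacle is the coherence claim just used: that the two compatibility isomorphisms $\rho\,\Phi_G \cong \Phi_H\,\rho$ and $\rho\,\Psi_G \cong \Psi_H\,\rho$ are mates of one another under the adjunctions, so that $\rho$ genuinely carries the unit and counit of $\Phi_G \dashv \Psi_G$ to those of $\Phi_H \dashv \Psi_H$. This is a compatibility of the natural isomorphisms built from the decomposition $k(G) = \operatorname{ind}_H^G k(H)$ underlying~\eqref{G-G-prime-Phi-Psi} together with the results of~\cite[Section~8.1.2]{Psemi}; it is a coherence verification rather than a difficulty of substance, but it is where the real care is needed.
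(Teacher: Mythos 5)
Your proposal is correct and follows essentially the same route as the paper: fix a witnessing compact open subgroup $H$, specialize the commutative diagrams~\eqref{G-G-prime-Phi-Psi} to $G'=H$, and reduce to the standard fact that $\Psi_\C$ and $\Phi_\C$ are mutually inverse on injective $\C$\+comodules and projective $\C$\+contramodules. The only difference is that you make explicit the conservativity of the forgetful functors and the mate/coherence compatibility of the unit and counit, which the paper leaves implicit in the phrase ``the question reduces to.''
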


\begin{proof}
 It suffices to show that for every compact open subgroup $H\subset G$
the functors $\Psi_G$ and $\Phi_G$ restrict to mutually inverse
equivalences between the full subcategory of smooth $G$\+modules
over~$k$ that are injective as discrete $H$\+modules and the full
subcategory of $G$\+contramodules over~$k$ that are projective as
$H$\+contramodules.

 In view of commutativity of the diagrams~\eqref{G-G-prime-Phi-Psi}
for $G'=H$, the question reduces to showing that the functors
$\Psi_H=\Psi_\C$ and $\Phi_H=\Phi_\C$ (where $\C=k(H)$) restrict to
mutually inverse equivalences between the full subcategories of
injective objects in $H\discr_k=\C\comodl$ and projective objects
in $H\contra_k=\C\contra$.
 The latter is a standard result about comodules and
contramodules over a coalgebra over a field~\cite[Sections~0.2.6
and~5.1.3]{Psemi}, \cite[Sections~1.2 and~3.4]{Prev},
\cite[Sections~5.1\+-5.2]{Pkoszul}.
 Alternatively, one can apply directly the results
of~\cite[Sections~0.3.7 and~6.2]{Psemi}
(see also~\cite[Section~3.5]{Prev}).
\end{proof}

 A smooth $G$\+module over~$k$ is called \emph{semiprojective} if it
is a direct summand of an (infinite) direct sum of copies of
the smooth $G$\+module $\bS=k(G)$, or in other words, if it is a direct
summand of the smooth $G$\+module $\bS\ot_kV$ for some $k$\+vector
space~$V$.
 A $G$\+contramodule over~$k$ is called \emph{semiinjective} if it is
a direct summand of an (infinite) product of copies of
the $G$\+contramodule $\bS\spcheck$ Pontryagin dual to the smooth
$G$\+module $\bS=k(G)$, or in other words, if it is a direct summand
of the $G$\+contramodule $\Hom_k(\bS,V)$ for some $k$\+vector space~$V$.

 Semiprojective smooth $G$\+modules over~$k$ are weakly compactly
injective; in fact, they are injective objects of $H\discr_k$ for
every compact open subgroup $H\subset G$.
 Semiinjective $G$\+contramodules over~$k$ are weakly compactly
projective; in fact, they are projective objects of $H\contra_k$
for every compact open subgroup $H\subset G$.

\begin{prop} \label{semi-injectives-projectives-correspondence}
\textup{(a)} There are enough injective objects in the abelian
category $G\smooth_k$, and the forgetful functors $G\smooth_k\rarrow
H\discr_k$ preserve injectives, so injectives in $G\smooth_k$ are
weakly compactly injective.
 The functors\/ $\Psi_G$ and\/ $\Phi_G$ identify the full subcategory
of injective objects in $G\smooth_k$ with the full subcategory of
semiinjective objects in $G\contra_k$. \par
\textup{(b)} There are enough projective objects in the abelian
category $G\contra_k$, and the forgetful functors $G\contra_k
\rarrow H\contra_k$ preserve projectives, so projectives in
$G\contra_k$ are weakly compactly projective.
 The functors\/ $\Phi_G$ and\/ $\Psi_G$ identify the full subcategory
of projective objects in $G\contra_k$ with the full subcategory of
semiprojective objects in $G\smooth_k$.
\end{prop}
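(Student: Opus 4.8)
The two parts are mutually dual, so the plan is to carry out part~(a) and to deduce part~(b) by the symmetric argument, interchanging $\C\comodl$ with $\C\contra$, injectives with projectives, and the functors $\Phi_G$ and $\Psi_G$. Everything takes place inside the equivalence of Proposition~\ref{underived-semico-semicontra-prop}, which already identifies, via $\Psi_G$ and $\Phi_G$, the weakly compactly injective smooth $G$\+modules with the weakly compactly projective $G$\+contramodules. Since the semiprojective modules $\bS\ot_k V$ and the semiinjective contramodules $\Hom_k(\bS,V)$ were just observed to lie in these two classes, the only real task is to recognize, among the weakly compact objects, the honest injectives and projectives and to match them with the semi\+objects.

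The two structural assertions I would settle by adjunction. The forgetful functor $G\smooth_k\rarrow H\discr_k$ is the restriction $\bS\simodl\rarrow\C\comodl$; its left adjoint, the compactly supported induction $\operatorname{ind}_H^G=\bS\oc_\C{-}$, is exact since $H$ is open, so the forgetful functor preserves injectives, and therefore every injective smooth $G$\+module is weakly compactly injective. Dually, the forgetful functor $G\contra_k\rarrow H\contra_k$ has as its right adjoint the coinduction $\bQ\longmapsto\Hom_\C(\bS,\bQ)$, which is exact because $\bS$ is injective (cofree) as a left $\C$\+comodule, so it preserves projectives and projective $G$\+contramodules are weakly compactly projective. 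Enough injectives in $G\smooth_k$ are available because it is a Grothendieck category; that $G\contra_k$ has enough projectives is less formal, and I fold it into the identification below, the projective generators being the objects $\Psi_G(\bS\ot_k V)$.

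The identification thus reduces to a single claim: that $\Phi_G(\Hom_k(\bS,V))$ is injective in $G\smooth_k$ and that these objects cogenerate (dually, that $\Psi_G(\bS\ot_k V)$ is projective and these generate). Granting it, each injective $\bM$ embeds into some $\Phi_G(\Hom_k(\bS,V))$ and the embedding splits, exhibiting $\bM$ as a direct summand of $\Phi_G$ applied to a semiinjective contramodule; applying $\Psi_G$ and using $\Psi_G\Phi_G\simeq\operatorname{id}$ from Proposition~\ref{underived-semico-semicontra-prop} shows that $\Psi_G\bM$ is semiinjective, while conversely $\Phi_G$ of any semiinjective contramodule is a direct summand of the injective $\Phi_G(\Hom_k(\bS,V))$, hence is injective. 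For the weakly compact half of the claim I would use the compatibility~\eqref{G-G-prime-Phi-Psi} with $G'=H$: restriction to $H$ gives $\Phi_G(\Hom_k(\bS,V))|_H\simeq\Phi_\C\big(\Hom_k(\bS|_H,V)\big)$, and since $\bS=\operatorname{ind}_H^G\C$ makes $\bS|_H$ a cofree $\C$\+comodule, $\Hom_k(\bS|_H,V)$ is a free $\C$\+contramodule, whose image under $\Phi_\C$ is a cofree, hence injective, discrete $H$\+module.

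The one genuinely nonformal point — and the main obstacle — is to upgrade this \emph{relative} injectivity to honest injectivity in all of $G\smooth_k$, and similarly to secure cogeneration (dually: honest projectivity of $\Psi_G(\bS\ot_k V)$ and generation). My approach would be to exploit $\bS=\operatorname{ind}_H^G\C$ to evaluate the contratensor product $\bS\Ocn_{k,G}\Hom_k(\bS,V)$ directly, by a Fubini\+type decomposition over the cosets in $G/H$, thereby identifying $\Phi_G(\Hom_k(\bS,V))$ with the coinduction to $G$ of a cofree discrete $H$\+module; as coinduction is the right adjoint of the exact restriction functor, such an object is injective in $G\smooth_k$, and cogeneration follows by embedding $\bM|_H$ into a cofree $\C$\+comodule and passing back along the adjunction. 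The projective statement for part~(b) is obtained dually. This is exactly the specialization to the semialgebra $\bS=k(G)$ of the semimodule\+semicontramodule correspondence of~\cite[Sections~6.1--6.2]{Psemi}, which I would invoke for the remaining verifications.
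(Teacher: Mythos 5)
Your argument is correct in substance, but note that the paper does not argue this way at all: its proof is a one\+line citation of Proposition~\ref{underived-semico-semicontra-prop} together with the general semialgebra\+level statement \cite[Proposition~3.5]{Prev} (see also \cite[Proposition~3.1]{Pmc}), of which this proposition is the specialization to $\bS=k(G)$. What you have written is essentially an unfolding of that citation, and the unfolding is right: the adjunction arguments for preservation of injectives and projectives under restriction are correct (the left adjoint $\operatorname{ind}_H^G=\bS\oc_\C{-}$ is exact because $\bS$ is cofree as a right $\C$\+comodule; the right adjoint of $G\contra_k\rarrow H\contra_k$ is a $\operatorname{Cohom}_\C(\bS,{-})$ rather than a comodule Hom, but it is indeed exact because $\bS$ is an injective left $\C$\+comodule), and the reduction of the whole statement to the single claim that $\Phi_G(\Hom_k(\bS,V))$ is an injective cogenerator of $G\smooth_k$ is exactly the right move. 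The one step you leave genuinely unperformed---the ``Fubini\+type'' evaluation of $\bS\Ocn_{k,G}\Hom_k(\bS,V)$---is also the step you can avoid: it is easier to run the computation through $\Psi_G$ instead, namely $\Psi_G(\operatorname{Ind}_H^G(\C\ot_kV))=\Hom_{k[G]}(\bS,\operatorname{Ind}_H^G(\C\ot_kV))\simeq\Hom_{k[H]}(\bS,\C\ot_kV)\simeq\prod_{H\backslash G}\Hom_k(\C,V)\simeq\Hom_k(\bS,V)$, using the restriction/smooth\+induction adjunction and the decomposition of $\bS$ as a direct sum of copies of $\C$ over the cosets. Since $\operatorname{Ind}_H^G(\C\ot_kV)$ is injective in $G\smooth_k$ (hence weakly compactly injective) and $\Hom_k(\bS,V)$ is weakly compactly projective, Proposition~\ref{underived-semico-semicontra-prop} converts this into the desired isomorphism $\Phi_G(\Hom_k(\bS,V))\simeq\operatorname{Ind}_H^G(\C\ot_kV)$ with no contratensor computation, and your argument then closes without any further appeal to \cite{Psemi}.
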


\begin{proof}
 Follows from Proposition~\ref{underived-semico-semicontra-prop}
and~\cite[Proposition~3.5]{Prev} (see~\cite[Proposition~4.1]{Pmc}
for a slightly more general result).
\end{proof}

\subsection{The universal acting algebra}
 Introducing a $k$\+algebra containing the group algebra $k[G]$ and
acting in all the smooth $G$\+modules over~$k$ is a natural thing
to do (see, e.~g, \cite[Section~1]{Koh}).
 In the context of the present paper, one may also wonder about
a $k$\+algebra containing $k[G]$ and acting in all
the $G$\+contramodules over~$k$.
 We choose the approach of computing and working with
the \emph{universal} such $k$\+algebra in both cases.
 It turns out that the two answers only differ by the passage to
the opposite algebra.

 Let us denote by $\fT=\Hom_{k[G]}(\bS,\bS)^\rop$ the opposite algebra to
the $k$\+algebra of endomorphisms of the smooth $G$\+module $\bS$
over~$k$ (with its left $G$\+module structure).
 This means that there is a right action of $\fT$ in $\bS$ commuting
with the left action of~$G$.
 The right action of $G$ in $\bS$ provides an injective homomorphism
$k[G]\rarrow\fT$.

 In particular, when $G=H$ is a profinite group, $\fT=\fR=\C\spcheck
=k[[H]]$ is the Pontryagin dual algebra to the coalgebra $\C=k(H)$.

\begin{prop}
\textup{(a)} The $k$\+algebra of endomorphisms of the forgetful
functor $G\smooth_k\rarrow k\modl$ is naturally isomorphic
to\/~$\fT^\rop$. \par
\textup{(b)} The $k$\+algebra of endomorphisms of the forgetful
functor $G\contra_k\rarrow k\modl$ is naturally isomorphic to\/~$\fT$.
\end{prop}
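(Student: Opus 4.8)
The plan is to treat both parts by the same principle: a natural endomorphism of a forgetful functor is the same as an element of the endomorphism ring of a suitable (pro)generator, read off by the Yoneda lemma, and the entire nontrivial content lies in the bookkeeping of the opposite algebra. A structural asymmetry between the two parts guides the argument: the forgetful functor on $G\contra_k$ is genuinely corepresentable (by a free contramodule), whereas the forgetful functor on $G\smooth_k$ is only a filtered colimit of corepresentables when $G$ is noncompact. This is what ultimately puts the $\rop$ on opposite sides in (a) and~(b).

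I would do part~(b) first, as it is the cleaner case. The free $G$\+contramodule on one generator is $k[[G]]$, with contraaction given by the pushforward of measures along the multiplication map of $G$, and the free\+forgetful adjunction provides a natural isomorphism $\Hom_k^G(k[[G]],\bP)\simeq\bP$. Thus the forgetful functor $U_{\mathsf c}\:G\contra_k\rarrow k\modl$ is corepresented by $k[[G]]$, and the Yoneda lemma for a corepresentable functor gives a ring isomorphism $\operatorname{End}(U_{\mathsf c})\simeq\operatorname{End}_{G\contra_k}(k[[G]])^\rop$, the opposite arising because natural endomorphisms of $\Hom(A,{-})$ act by precomposition. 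It then remains to identify $\operatorname{End}_{G\contra_k}(k[[G]])$ with $\fT^\rop$, so that the two opposites cancel to give $\fT$; this identification is carried out by sending an endomorphism to the image of the generator and matching it with the right $\fT$\+action on $\bS=k(G)$ under Pontryagin duality, and it is exactly here that one must be careful about sidedness.

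For part~(a) the forgetful functor $U_{\mathsf s}\:G\smooth_k\rarrow k\modl$ is not corepresentable for noncompact $G$, so I would instead present it as a filtered colimit of corepresentables. For each compact open subgroup $H\subset G$ compact induction $\operatorname{ind}_H^G$ is left adjoint to restriction, giving $\Hom_{k[G]}(\bS_H,\bM)\simeq\bM^H$, the subspace of $H$\+fixed vectors in $\bM$, where $\bS_H=\operatorname{ind}_H^G k$; since every smooth module is the union of these subspaces, $U_{\mathsf s}\simeq\varinjlim_H\Hom_{k[G]}(\bS_H,{-})$ over compact open subgroups directed by reverse inclusion. Mapping out of a colimit turns it into a limit, and Yoneda then yields $\operatorname{End}(U_{\mathsf s})\simeq\varprojlim_H U_{\mathsf s}(\bS_H)=\varprojlim_H\bS_H$, the transition maps being the fibrewise summation maps $\bS_{H'}\rarrow\bS_H$ for $H'\subset H$. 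Equivalently, such a natural endomorphism is given by the action of a measure $\mu$ on every smooth module by $m\mapsto\int_G gm\,d\mu(g)$, which is automatically natural because the morphisms of $G\smooth_k$ are $G$\+equivariant. Since $\bS=k(G)=\varinjlim_H\bS_H$ along the inclusion maps, this limit is to be identified, as a ring, with $\operatorname{End}_{k[G]}(\bS)=\fT^\rop$, completing the computation once the orientation of the multiplication is checked.

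The adjunctions, the colimit/limit interchange, and the measure calculus are routine; the one real obstacle, in both parts, is to orient the multiplications so that part~(a) produces $\fT^\rop$ and part~(b) produces $\fT$. This is genuinely delicate because the same group algebra $k[G]$ embeds into $\operatorname{End}(U_{\mathsf s})$ through the action $m\mapsto gm$ (which on $\bS$ is left translation) but into $\fT$ through the right translation action on $\bS$; reconciling these two embeddings, together with the Yoneda opposite appearing in part~(b), is what forces the passage to the opposite algebra, and one must verify that the single surviving $\rop$ lands on the correct side in each statement. Because the antipode $g\mapsto g^{-1}$ makes $k[[G]]$ abstractly anti-isomorphic to itself, the issue is never one of abstract isomorphism type but entirely one of keeping track of the natural identifications, so this bookkeeping has to be done explicitly rather than waved away.
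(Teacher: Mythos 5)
Your part~(a) is a legitimate alternative route to the paper's argument: instead of restricting the forgetful functor to injective objects and transporting it along the equivalence with semiinjective contramodules, you present $U_{\mathsf s}$ as the filtered colimit of the corepresentable functors $\bM\mapsto\bM^H=\Hom_{k[G]}(\bS_H,\bM)$ and compute its natural endomorphisms as $\varprojlim_H\bS_H$. This can be made to work, but note that the final identification of $\varprojlim_H k(G/H)$ (with summation transition maps) with $\operatorname{End}_{k[G]}(\bS)=\varprojlim_H\bS^H$ as rings is precisely the left/right bookkeeping you defer, and it is not purely formal: the two inverse limits arise by collapsing the double system $\Hom_{k[G]}(\bS_{H'},\bS_H)$ along different indices, so their agreement has to be checked, not just the placement of~$\rop$.

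Part~(b) contains a genuine gap. The forgetful functor $G\contra_k\rarrow k\modl$ is \emph{not} corepresented by $k[[G]]$ when $G$ is noncompact. First, the pushforward of measures along $G\times G\rarrow G$ is only defined on $k[[G\times G]]$, which is a proper subspace of $k[[G]][[G]]$; it therefore does not provide a contraaction map $k[[G]][[G]]\rarrow k[[G]]$, and there is no ``free--forgetful adjunction'' of the kind you invoke ($G\contra_k$ is not the category of modules over a monad $V\mapsto V[[G]]$; that description is only valid for profinite~$G$). Second, the space $k[[G]]\simeq\Hom_k(\bS,k)=\bS\spcheck$ does carry a natural $G$\+contramodule structure, but it is the \emph{semiinjective} object of $G\contra_k$: morphisms \emph{into} it compute $\Hom_k^G(\bP,\bS\spcheck)\simeq(\bS\Ocn_{k,G}\bP)\spcheck$, while morphisms out of it have nothing to do with the forgetful functor. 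The object that actually corepresents the forgetful functor is $\fT=\Psi_G(\bS)=\Hom_{k[G]}(\bS,\bS)$, which differs from $k[[G]]$ for noncompact $G$ (already for $G=H\times\Z$ with $H$ an infinite profinite group). Proving that evaluation at $\operatorname{id}_\bS$ gives an isomorphism $\Hom_k^G(\fT,\bP)\simeq\bP$ requires checking it on projectives via the adjunction between $\Phi_G$ and $\Psi_G$ together with $\Phi_G\Psi_G(\bS)\simeq\bS$ --- that is, essentially Proposition~\ref{semi-injectives-projectives-correspondence}(b), which is the route the paper takes. With this correction your two opposites do cancel as intended, since $\operatorname{End}_{G\contra_k}(\fT)\simeq\operatorname{End}_{k[G]}(\bS)=\fT^\rop$ by the full faithfulness of $\Psi_G$ on weakly compactly injective smooth $G$\+modules.
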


\begin{proof}
 First of all, we have to construct a natural right action of $\fT$
in smooth $G$\+modules and a natural left action of $\fT$ in
$G$\+contramodules over~$k$.
 The most straightforward approach would be to compute the forgetful
functor $\simodr\bS=G\smooth_k\rarrow k\modl$ as the functor of
semitensor product ${-}\mathbin{\lozenge}_\bS\bS$ with the left
$\bS$\+semimodule $\bS$ \cite[Sections~0.3.2 and~1.4.1]{Psemi} and
the forgetful functor $\bS\sicntr=G\contra_k\rarrow k\modl$ as
the functor of semihomomorphisms $\operatorname{SemiHom}_\bS(\bS,{-})$
from the left $\bS$\+semimodule $\bS$
\cite[Sections~0.3.5 and~3.4.1\+-3.4.2]{Psemi}.
 A more roundabout argument below is based on the constructions
discussed above in this paper. 

 Part~(a): let $G\smooth_k^\inj\subset G\smooth_k$ denote the full
subcategory of injective smooth $G$\+modules and
$G\contra_k^\siin\subset G\contra_k$ denote the full subcategory
of semiinjective $G$\+contramodules over~$k$.
 Since there are enough injectives in $G\smooth_k$, the algebra
of endomorphisms of the forgetful functor $G\smooth_k\rarrow
k\modl$ is isomorphic to the algebra of endomorphisms of the
restriction of this functor to the full subcategory of injective
objects $G\smooth_k^\inj\subset G\smooth_k$.
 In view of the equivalence of categories $G\smooth_k^\inj\simeq
G\contra_k^\siin$ from
Proposition~\ref{semi-injectives-projectives-correspondence}(a),
the latter algebra is isomorphic to the algebra of endomorphisms
of the functor of contratensor product $\bS\Ocn_{k,G}{-}\:
G\contra_k^\siin\rarrow k\modl$.
 The ring $\fT^\rop$ of endomorphisms of the smooth $G$\+module
$\bS$ (with its right $G$\+module structure) acts naturally by
endomorphisms of this functor on the left.

 Now consider the smooth $G$\+module $\bS$ with its left $G$\+module
structure.
 The group $G$ acts on the right by automorphisms of the object
$\bS\in G\smooth_k$.
 Hence any endomorphism of the forgetful functor $G\smooth_k\rarrow
k\modl$ must act in the $k$\+vector space $\bS$ by an operator
commuting with the right action of $G$, i.~e., by an operator
coming from an element of $\fT^\rop$.
 It remains to show that every nonzero endomorphism of the forgetful
functor acts in $\bS$ by a nonzero operator.
 Indeed, in view of 
Proposition~\ref{semi-injectives-projectives-correspondence}(b),
$\bS$ is a projective generator of the exact category of weakly
compactly injective smooth $G$\+modules over~$k$, which by
Proposition~\ref{semi-injectives-projectives-correspondence}(a)
contains an injective cogenerator of the abelian category $G\smooth_k$.

 Part~(b): let $G\contra_k^\proj\subset G\contra_k$ denote the full
subcategory of projective $G$\+contramodules and $G\smooth_k^\sipr
\subset G\smooth_k$ denote the full subcategory of semiprojective
smooth $G$\+modules over~$k$.
 Since there are enough projectives in $G\contra_k$, the algebra of
endomorphisms of the forgetful functor $G\contra_k\rarrow k\modl$
is isomorphic to the algebra of endomorphisms of the restriction of
this functor to the full subcategory of projective objects
$G\contra_k^\proj\subset G\contra_k$.
 In view of the equivalence of categories $G\contra_k^\proj\simeq
G\smooth_k^\sipr$ from
Proposition~\ref{semi-injectives-projectives-correspondence}(b),
the latter algebra is isomorphic to the algebra of endomorphisms
of the corepresentable functor $\Hom_{k[G]}(\bS,{-})\:
G\smooth_k^\sipr\rarrow k\modl$.
 The latter algebra is isomorphic to the opposite algebra to
the algebra of endomophisms of the corepresenting object $\bS$
(with its left $G$\+module structure), that is the algebra~$\fT$.
\end{proof}

 Let $\bN$ be a smooth $G$\+module and $\bP$ be a left
$G$\+contramodule over~$k$.
 Then for every $k$\+vector space $V$ there is a natural morphism
of $k$\+vector spaces
\begin{multline*}
 \Hom_k(\bN\Ocn_{k,G}\bP,\,V)\.\simeq\.
 \Hom_k^G(\bP,\.\Hom_k(\bN,V))\\ \lrarrow
 \Hom_\fT(\bP,\.\Hom_k(\bN,V))\.\simeq\.
 \Hom_k(\bN\ot_\fT\bP,\,V).
\end{multline*}
 This $k$\+linear map being functorial in~$V$, it follows that there
is a natural morphism of $k$\+vector spaces
$$
 \bN\ot_\fT\bP\lrarrow\bN\Ocn_{k,G}\bP,
$$
which is surjective, because the map
$$
 \Hom_k^G(\bP,\.\Hom_k(\bN,V))\lrarrow\Hom_\fT(\bP,\.\Hom_k(\bN,V))
$$
is injective (as the identity embedding of two subspaces in
$\Hom_k(\bP,\Hom_k(\bN,V))$).

\begin{cor} \label{locally-fg-pro-p-contra-ring-T-fully-faithful}
 Let $G$ be a locally profinite group with a compact open subgroup
$H\subset G$.
 Assume that $H$ is a finitely generated pro-$p$-group and $k$~is
a field of characteristic~$p$.
 Then the forgetful functor $G\contra_k\rarrow\fT\modl$ is fully
faithful.
\end{cor}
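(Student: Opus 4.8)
The strategy is to realize the functor in question as the first functor in a factorization of a forgetful functor already known (from Corollary~\ref{locally-fg-pro-p-fully-faithful}) to be fully faithful, and then to conclude by a purely formal argument. Recall that the right action of $G$ on $\bS$ provides an injective ring homomorphism $k[G]\rarrow\fT$, hence a restriction functor $\fT\modl\rarrow k[G]\modl=G\modl_k$, which is faithful (restriction along a ring homomorphism leaves the underlying maps unchanged). The plan is to verify that the composite
$$
 G\contra_k\lrarrow\fT\modl\lrarrow G\modl_k
$$
is the standard forgetful functor $G\contra_k\rarrow G\modl_k$, and then to invoke the elementary fact that if a composite of functors $\mathcal A\rarrow\mathcal B\rarrow\mathcal C$ is fully faithful and the second functor $\mathcal B\rarrow\mathcal C$ is faithful, then the first functor $\mathcal A\rarrow\mathcal B$ is fully faithful.

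The one substantive point, and the step I expect to be the main obstacle, is the commutativity of this triangle of forgetful and restriction functors. The $\fT$\+module structure on a $G$\+contramodule $\bP$, and hence the functor $G\contra_k\rarrow\fT\modl$, arises from the preceding proposition, which identifies $\fT$ with the $k$\+algebra of endomorphisms of the forgetful functor $G\contra_k\rarrow k\modl$. What must be checked is that restricting this $\fT$\+action along $k[G]\rarrow\fT$ returns exactly the underlying $G$\+module structure of $\bP$ (the one defined via the point measures $k[G]\rarrow k[[G]]$ and the contraaction map $\pi_\bP$). Concretely this means tracing the natural $G$\+action on the forgetful functor $G\contra_k\rarrow k\modl$ through the isomorphism of the proposition and matching it with the image of $k[G]$ in $\fT$; I would carry this out by evaluating on free $G$\+contramodules, where both actions are transparent. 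Since this is only a comparison of two descriptions of one and the same $G$\+action, I expect it to be routine, though it is the heart of the matter.

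Granting the commutativity, the argument concludes at once. By Corollary~\ref{locally-fg-pro-p-fully-faithful} the composite $G\contra_k\rarrow\fT\modl\rarrow G\modl_k$ is fully faithful, while the restriction functor $\fT\modl\rarrow G\modl_k$ is faithful; hence the forgetful functor $G\contra_k\rarrow\fT\modl$ is fully faithful by the elementary fact quoted above. Unwinding this formal step, it simply says that a $\fT$\+linear map between two $G$\+contramodules is in particular $k[G]$\+linear, hence a morphism of $G$\+contramodules by Corollary~\ref{locally-fg-pro-p-fully-faithful}, while conversely every $G$\+contramodule morphism is automatically $\fT$\+linear by the naturality of the $\fT$\+action.
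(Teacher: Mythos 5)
Your proposal is correct and is essentially the paper's own argument: the paper's proof is the one-line observation that this is a weaker statement than Corollary~\ref{locally-fg-pro-p-fully-faithful} because $k[G]$ is a subalgebra of $\fT$, which is exactly your factorization-plus-faithfulness argument spelled out in more detail (including the compatibility check that the paper leaves implicit).
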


\begin{proof}
 This is a weaker version of
Corollary~\ref{locally-fg-pro-p-fully-faithful}, as $k[G]$ is
a subalgebra in~$\fT$.
\end{proof}

\begin{cor} \label{locally-fg-pro-p-ring-T-contratensor-iso}
  Let $G$ be a locally profinite group with a compact open subgroup
$H\subset G$.
 Assume that $H$ is a finitely generated pro-$p$-group and $k$~is
a field of characteristic~$p$.
 Then for any smooth $G$\+module\/ $\bN$ and any $G$\+contramodule\/
$\bP$ over~$k$ the natural map\/ $\bN\ot_\fT\bP\rarrow\bN\Ocn_{k,G}\bP$
is an isomorphism.
\end{cor}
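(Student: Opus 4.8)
The plan is to mimic the proof of Corollary~\ref{fg-coalgebra-contratensor-iso}: I~would dualize the comparison map with the functor $\Hom_k({-},V)$, turning the assertion into a statement about $\Hom$ spaces to which the fully faithfulness result Corollary~\ref{locally-fg-pro-p-contra-ring-T-fully-faithful} applies. Recall that the natural morphism $\bN\ot_\fT\bP\rarrow\bN\Ocn_{k,G}\bP$ was constructed above precisely by dualizing, for every $k$\+vector space~$V$, the forgetful comparison
$$
 \Hom_k^G(\bP,\.\Hom_k(\bN,V))\lrarrow\Hom_\fT(\bP,\.\Hom_k(\bN,V))
$$
through the natural isomorphisms $\Hom_k(\bN\Ocn_{k,G}\bP,\,V)\simeq\Hom_k^G(\bP,\Hom_k(\bN,V))$ and $\Hom_\fT(\bP,\Hom_k(\bN,V))\simeq\Hom_k(\bN\ot_\fT\bP,\,V)$. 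The first of these is recorded in the discussion preceding the corollary; the second is the ordinary tensor--hom adjunction over the ring~$\fT$, available because $\bN$ carries a natural right $\fT$\+module structure (the action of $\fT^\rop$ on smooth $G$\+modules) and the left $\fT$\+module $\Hom_k(\bN,V)$ is the one underlying the $G$\+contramodule $\Hom_k(\bN,V)$ via the forgetful functor $G\contra_k\rarrow\fT\modl$.

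Carrying this out, I~would apply $\Hom_k({-},V)$ to the map in question. By the construction just recalled, this reproduces exactly the composite
\begin{multline*}
 \Hom_k(\bN\Ocn_{k,G}\bP,\,V)\.\simeq\.\Hom_k^G(\bP,\.\Hom_k(\bN,V)) \\
 \lrarrow\Hom_\fT(\bP,\.\Hom_k(\bN,V))\.\simeq\.\Hom_k(\bN\ot_\fT\bP,\,V),
\end{multline*}
whose middle arrow is induced by the forgetful functor $G\contra_k\rarrow\fT\modl$ on morphisms out of~$\bP$ with target the $G$\+contramodule $\Hom_k(\bN,V)$. By Corollary~\ref{locally-fg-pro-p-contra-ring-T-fully-faithful} this forgetful functor is fully faithful, so the middle arrow is a bijection and the entire composite is an isomorphism, for every~$V$.

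It then remains to descend from ``the $k$\+linear dual is an isomorphism for all $V$'' to ``the map itself is an isomorphism,'' which is routine linear algebra over a field: a $k$\+linear map $f$ is surjective if and only if $\Hom_k(f,V)$ is injective for all~$V$, and injective if and only if $\Hom_k(f,V)$ is surjective for all~$V$ (every injection of vector spaces splits), so a map with bijective dual for every $V$ is itself bijective. Since the surjectivity of $\bN\ot_\fT\bP\rarrow\bN\Ocn_{k,G}\bP$ was already observed, one could equally well extract injectivity alone. I~do not expect a genuine obstacle here: the one place that looks as though it might require care is the identification of the dualized map with the forgetful comparison rather than some twisted variant, but this is immediate, since $\bN\ot_\fT\bP\rarrow\bN\Ocn_{k,G}\bP$ was defined as the unique map inducing precisely that comparison on $k$\+linear duals.
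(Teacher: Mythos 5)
Your proposal is correct and is essentially the paper's own argument: the paper proves this corollary by citing Corollary~\ref{locally-fg-pro-p-contra-ring-T-fully-faithful} (or alternatively Corollary~\ref{locally-fg-pro-p-contratensor-iso}), and the dualization-with-$\Hom_k({-},V)$ mechanism you spell out is exactly the one used in the proof of Corollary~\ref{fg-coalgebra-contratensor-iso}, which the paper implicitly invokes here. No issues.
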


\begin{proof}
 Follows from
Corollary~\ref{locally-fg-pro-p-contra-ring-T-fully-faithful} or
Corollary~\ref{locally-fg-pro-p-contratensor-iso}.
\end{proof}

 We denote by $\Ext_{\fT^\rop}$ and $\Tor^\fT$ the conventional Ext and
Tor functors over the ring $\fT^\rop$ or $\fT$ (i.~e., the derived
functors of Hom and tensor product computed in the abelian categories
of $\fT$\+modules).

\begin{cor} \label{adjusted-to-ext-tor-over-T}
 Let $G$ be a $p$\+adic Lie group and $k$ be a field of
characteristic~$p$.
 Then \par
\textup{(a)} for any weakly compactly injective smooth $G$\+module\/
$\bM$ over~$k$ one has\/ $\Ext_{\fT^\rop}^i(\bS,\bM)=0$ for all\/
$i>0$; \par
\textup{(b)} for any weakly compactly projective $G$\+contramodule\/
$\bP$ over~$k$ one has\/ $\Tor^\fT_i(\bS,\bP)=0$ for all\/ $i>0$.
\end{cor}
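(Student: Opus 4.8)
The plan is to reduce both assertions, by a change-of-scalars argument along the natural ring homomorphism $k[[H]]\rarrow\fT$, to the corresponding vanishing statements over the completed group algebra $\fR=k[[H]]$, where they follow immediately from Corollary~\ref{inj-inj-proj-flat-cor}. Since $G$ is a $p$\+adic Lie group, it contains a compact open uniform pro-$p$-subgroup; and after shrinking $H$ I may assume $H$ itself is uniform. Indeed, a weakly compactly injective $\bM$ is injective in $H_0\discr_k$ for some compact open $H_0$, and the forgetful functor $H_0\discr_k\rarrow H\discr_k$ to a uniform open $H\subset H_0$ preserves injectives, so $\bM$ is injective in $H\discr_k$; dually $\bP$ is projective in $H\contra_k$. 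Corollary~\ref{inj-inj-proj-flat-cor} then tells me that $\bM$ is an injective $\fR$\+module and $\bP$ is a flat $\fR$\+module. The map $\fR\rarrow\fT$ itself comes from extending the right $H$\+action on $\bS=k(G)$ to a right $k[[H]]$\+action commuting with the left $G$\+action; it restricts to the embedding $k[H]\subset k[G]\subset\fT$.

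The two structural facts carrying the reduction are the following. First, $\fT$ is free, hence flat, as a module over $\fR$ on either side: the coset decomposition of $G$ exhibits both $\bS=k(G)$ as a direct sum of copies of $\C=k(H)$ (indexed by $G/H$) and $\fT$ as a direct sum of copies of $\fR$ (indexed by $H\backslash G$), each of rank one. Second, the compactly supported smooth induction isomorphism $k(G)=\operatorname{ind}_H^G k(H)$ should identify $\bS$, as a right $\fT$\+module, with the module induced from the right $\fR$\+module $\C=k(H)$,
$$
 \bS\.\simeq\.\C\ot_\fR\fT.
$$
Both $\operatorname{ind}_H^G$ and ${-}\ot_\fR\fT$ are the left adjoints to the respective restriction functors ($G\smooth_k\rarrow H\discr_k$ and $\modr\fT\rarrow\modr\fR$), and the forgetful functors to module categories intertwine these restrictions; so the identification follows from uniqueness of left adjoints, once one checks that the forgetful functor $G\smooth_k\rarrow\modr\fT$ carries $\operatorname{ind}_H^G$ to ${-}\ot_\fR\fT$. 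I expect the verification of this compatibility, together with the careful left/right and opposite-ring bookkeeping, to be the main obstacle; the homological conclusion afterwards is formal.

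Granting these two facts, the change of rings is routine. For part~(b), a flat resolution $F_\bu\rarrow\C$ of the right $\fR$\+module $\C$ yields, upon applying the functor ${-}\ot_\fR\fT$ (exact because $\fT$ is flat as a left $\fR$\+module, and sending flat $\fR$\+modules to flat $\fT$\+modules), a flat resolution $F_\bu\ot_\fR\fT\rarrow\C\ot_\fR\fT\simeq\bS$ of $\bS$ over $\fT$. Tensoring with $\bP$ and using associativity of the tensor product gives $\Tor^\fT_i(\bS,\bP)\simeq\Tor^\fR_i(\C,\bP)$, and the right-hand side vanishes for $i>0$ since $\bP$ is a flat $\fR$\+module. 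Part~(a) is dual: using the flatness of $\fT$ as a right $\fR$\+module and the induced presentation of $\bS$, a projective resolution of $\C$ over $\fR$ induces up to a projective resolution of $\bS$ over $\fT^\rop$, whence $\Ext^i_{\fT^\rop}(\bS,\bM)\simeq\Ext^i_\fR(\C,\bM)$, which vanishes for $i>0$ because $\bM$ is an injective $\fR$\+module. The passage between left and right $\fR$\+modules occurring in this bookkeeping is harmless, since $k[[H]]\simeq k[[H]]^\rop$ via $h\mapsto h^{-1}$, so injectivity and flatness transfer freely between the two sides.
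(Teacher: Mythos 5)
Your overall strategy---reduce to $\fR=k[[H]]$ for a uniform open subgroup $H$ via the flatness of $\fT$ over $\fR$ and the identification $\bS\simeq\C\ot_\fR\fT$, then conclude by Corollary~\ref{inj-inj-proj-flat-cor}---is exactly the paper's, and your change-of-rings bookkeeping at the end is fine. But both of the ``structural facts'' carrying the reduction have genuine gaps.

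First, $\fT$ is \emph{not} a direct sum of copies of $\fR$ indexed by $H\backslash G$. The coset decomposition does give $\bS|_H\simeq\bigoplus_{H\backslash G}\C\simeq\C\ot_kV$ with $V=k^{(H\backslash G)}$, but then
$\fT=\Hom_{k[G]}(\operatorname{ind}_H^G\C,\>\bS)=
\Hom_{k[H]}(\C,\.\C\ot_kV)=\Hom_k(\C,V)$
is the \emph{free $\fR$-contramodule} $\fR[[H\backslash G]]=\varprojlim_n(\fR/\fI^n)[H\backslash G]$, which strictly contains the free module $\fR[H\backslash G]$ whenever $H$ and $H\backslash G$ are both infinite (the functor $\Psi_H=\Hom_{k[H]}(\C,{-})$ does not commute with infinite direct sums). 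So ``free, hence flat'' fails; the flatness of $\fR[[X]]$ is precisely the nontrivial content of Proposition~\ref{noetherian-injective-flat-prop}(b) and Corollary~\ref{inj-inj-proj-flat-cor}(b), and this is where the uniformity of $H$ (Noetherianity of $\gr_\fI\fR$, Artin--Rees) actually enters. The paper obtains it by observing that $\fT=\Psi_G(\bS)$ restricts to a projective $H$-contramodule and citing that corollary.

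Second, the identification $\bS\simeq\C\ot_\fR\fT$ is the crux, and you leave it unproven. Uniqueness of adjoints does not deliver it: commutativity of the square of restriction functors only yields a canonical comparison map $\C\ot_\fR\fT\rarrow\bS$ (a Beck--Chevalley map), and whether that map is an isomorphism is exactly the question---the forgetful functors $H\discr_k\rarrow\modr\fR$ and $G\smooth_k\rarrow\modr\fT$ are far from essentially surjective, so no formal argument applies. The paper proves the isomorphism by the chain $\C\ot_\fR\fT=\C\ocn_\C\fT=\Phi_H(\fT)=\Phi_G(\fT)\simeq\bS$, where the first equality is Corollary~\ref{fg-coalgebra-contratensor-iso} (resting on the full-faithfulness Theorem~\ref{fg-coalgebra-contra-fully-faithful}, which needs $H$ finitely generated), the middle ones come from the diagram~\eqref{G-G-prime-Phi-Psi}, and the last is $\Phi_G\Psi_G(\bS)\simeq\bS$ from Proposition~\ref{underived-semico-semicontra-prop}. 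Without some such input your reduction does not get off the ground.
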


\begin{proof}
 The group $G$ has a base of neighborhoods of zero formed by open
subgroups that are uniform pro-$p$-groups \cite[Proposition~1.16,
Theorem~4.2, and Theorem~8.32]{DSMS}.
 So we can fix such a subgroup $H\subset G$ for which $\bM$ is
an injective object in $H\discr_k$ and $\bP$ is a projective object
in $H\contra_k$.
 Set $\fR=k[[H]]=\C\spcheck\simeq\fR^\rop$;
then $\fR=\Hom_{k[H]}(\C,\C)$ is a subring in
$\fT=\Hom_{k[G]}(\bS,\bS)^\rop=\Hom_{k[H]}(\C,\bS)$.

 Furthermore, $\fT=\Psi_G(\bS)$ is naturally a $G$\+contramodule
over~$k$ with its left $\fT$\+module structure induced by its
$G$\+contramodule structure.
 The forgetful functor $G\contra_k\rarrow H\contra_k$ takes $\fT$
to a projective $H$\+contramodule over~$k$, since a semiprojective
smooth $G$\+module $\bS$ over $k$ is injective as a discrete
$H$\+module (cf.\ the proof of
Proposition~\ref{underived-semico-semicontra-prop} and
the subsequent discussion).
 By Corollary~\ref{inj-inj-proj-flat-cor}(b), it follows that $\fT$
is a flat left $\fR$\+module.
 In view of Corollary~\ref{fg-coalgebra-contratensor-iso},
the commutative diagram~\eqref{G-G-prime-Phi-Psi}, and
Proposition~\ref{underived-semico-semicontra-prop} or
Corollary~\ref{locally-fg-pro-p-ring-T-contratensor-iso}, we have
$$
 \C\ot_\fR\fT\.\overset{\ref{fg-coalgebra-contratensor-iso}}=\.
 \C\ocn_\C\fT\.=\.\Phi_H(\fT)
 \.\overset{\eqref{G-G-prime-Phi-Psi}}=\.\Phi_G(\fT)
 \.\overset{\ref{underived-semico-semicontra-prop}
 \ \text{or}\ \ref{locally-fg-pro-p-ring-T-contratensor-iso}}
 \simeq\.\bS,
$$
where the isomorphism $\Phi_\C(\fT)=\C\ocn_\C\fT\simeq
\C\Ocn_{k,H}\fT=\Phi_H(\fT)$ holds according to the discussion in
Sections~\ref{contratensor-loc-profinite-subsecn}\+-%
\ref{smooth-contra-subsecn}.

 Now we can compute
$$
 \boR\Hom^i_{\fT^\rop}(\bS,\bM)\.\simeq\.
 \boR\Hom^i_{\fT^\rop}(\fT^\rop\ot_\fR^\boL\C,\,\bM)
 \.\simeq\.\boR\Hom^i_\fR(\C,\bM)\.=\.0, \quad i>0
$$
by Corollary~\ref{inj-inj-proj-flat-cor}(a), and
$$
 H_i(\bS\ot_\fT^\boL\bP)\.=\.H_i((\C\ot_\fR^\boL\fT)\ot_\fT^\boL\bP)
 \.=\.H_i(\C\ot_\fR^\boL\bP)\.=\.0, \quad i>0
$$
by Corollary~\ref{inj-inj-proj-flat-cor}(b).
\end{proof}

\Section{Derived Equivalence and Duality Adjunction}

\subsection{Derived equivalence} \label{derived-equivalence-subsecn}
 Let $H$ be a profinite group and $k$ be a field.
 The \emph{$k$\+cohomological dimension of $H$} is conventionally
defined as the supremum of the set of all integers~$i$ for
which there exists a discrete $H$\+module $\M$ over~$k$ such that
$H^i(H,\M)\ne0$.
 Alternatively, the $k$\+cohomological dimension of $H$ can be
defined as the homological dimension of the abelian category
$H\discr_k$.
 The $k$\+cohomological dimension of any open subgroup $H'\subset H$
does not exceed that of~$H$; moreover, the $k$\+cohomological
dimensions of $H$ and $H'$ coincide if $H$ contains no elements
of finite order equal to $\operatorname{char}k$ \cite{Ser}.

 For any coalgebra $\C$ over~$k$, the homological dimensions of
the abelian categories of left $\C$\+comodules, right $\C$\+comodules,
and left $\C$\+contramodules are equal to each other, as all of
them are equal to the homological dimensions of the derived functors
$\operatorname{Cotor}^\C_*$ and $\operatorname{Coext}_\C^*$
\cite[Sections~0.2.2, 0.2.5, and~0.2.9]{Psemi},
\cite[Section~4.5]{Pkoszul}, \cite[Corollary~1.9.4]{Pweak}.
 In particular, the homological dimensions of the abelian categories
$H\discr_k$ and $H\contra_k$ are equal to each other (and to
the $k$\+cohomological dimension of~$H$).

 Let us say that a locally profinite group $G$ is \emph{locally of
finite $k$\+cohomological dimension~$n$} if it has a compact open
subgroup $H\subset G$ of finite $k$\+cohomological dimension~$n$.
 In this case, compact open subgroups of $k$\+cohomological
dimension~$n$ form a base of neighborhoods of zero in~$G$.

 In particular, a $p$\+adic Lie group $G$ is locally of finite
$k$\+cohomological dimension for any field~$k$.
 Specifically, in the unnatural characteristic $\operatorname{char}k
\ne p$ one has $n=0$, and in the natural characteristic
$\operatorname{char}k=p$ the local $k$\+cohomological dimension~$n$
is equal to the dimension of the group.

\begin{thm} \label{derived-equivalence}
 Let $k$ be a field and $G$ be a locally profinite group locally of
finite $k$\+cohomological dimension.
 Then for any derived category symbol\/ $\star=\b$, $+$, $-$,
or\/~$\varnothing$ there is a natural triangulated equivalence
between the derived categories of smooth $G$\+modules and
$G$\+contramodules over~$k$,
\begin{equation} \label{loc-finite-cohomol-dim-derived-equivalence}
 \boR\Psi_G\:\sD^\star(G\smooth_k)\.\simeq\.\sD^\star(G\contra_k):
 \!\boL\Phi_G
\end{equation}
provided by the derived functors of the adjoint functors
\textup{\eqref{Phi-G}} and~\textup{\eqref{Psi-G}}.
\end{thm}

\begin{proof}
 Denote by $G\smooth_k^\wcin\subset G\smooth_k$ the full subcategory
of weakly compactly injective smooth $G$\+modules and by
$G\contra_k^\wcpr\subset G\contra_k$ the full subcategory of
weakly compactly projective $G$\+contramodules over~$k$.

 The full subcategory $G\smooth_k^\wcin$ is closed under the cokernels
of monomorphisms, extensions, and direct summands in the abelian
category $G\smooth_k$; and every object of $G\smooth_k$ is a subobject
of an object from $G\smooth_k^\wcin$.
 In other words, $G\smooth_k^\wcin$ is a \emph{coresolving subcategory}
in $G\smooth_k$.
 In particular, as a full subcategory closed under extensions,
the category $G\smooth_k^\wcin$ inherits a Quillen exact category
structure from the abelian category $G\smooth_k$.

 Similarly, the full subcategory $G\contra_k^\wcpr$ is closed under
the kernels of epimorphisms, extensions, and direct summands in
the abelian category $G\contra_k$; and every object of $G\contra_k$
is a quotient object of an object from $G\contra_k^\wcpr$.
 In other words, $G\contra_k^\wcpr$ is a \emph{resolving subcategory}
in $G\contra_k$.
 In particular, the full subcategory $G\contra_k^\wcpr$ inherits
an exact category structure from the ambient abelian category
$G\contra_k$.

 The result of Proposition~\ref{underived-semico-semicontra-prop}
provides an equivalence between the two exact categories
$G\smooth_k^\wcin$ and $G\contra_k^\wcpr$.
 All of these assertions do not yet depend on the locally finite
$k$\+cohomological dimension assumption.

 When the group $G$ is locally of finite $k$\+cohomological
dimension~$n$, every smooth $G$\+module over~$k$ has a finite right
resolution of length~$\le n$ by modules from $G\smooth_k^\wcin$, and
every $G$\+contramodule over~$k$ has a finite left resolution of
length~$\le n$ by contramodules from $G\contra_k^\wcpr$.
 In other words, one can say that the weakly compactly injective
dimension of any smooth $G$\+module over~$k$ does not exceed~$n$,
and the weakly compactly projective dimension of any $G$\+contramodule
over~$k$ does not exceed~$n$.

 Applying the result of~\cite[Proposition~13.2.6]{KS}
or~\cite[Proposition~A.5.6]{Pcosh} and the assertion dual to it
(see also~\cite[\S\S\,I.5 and~I.7]{Har}),
we obtain triangulated equivalences
$$
 \sD^\star(G\smooth_k)\.\simeq\.\sD^\star(G\smooth_k^\wcin)\.\simeq\.
 \sD^\star(G\contra_k^\wcpr)\.\simeq\.\sD^\star(G\contra_k).
$$
 In other words, one can say that the (mutually inverse) derived
functors $\boR\Psi$ and $\boL\Phi$ providing the desired triangulated
equivalence are constructed using resolutions of compexes of smooth
$G$\+modules and complexes of $G$\+contramodules by arbitrary
complexes of weakly compactly injective smooth $G$\+modules and
weakly compactly projective $G$\+contramodules over~$k$.
\end{proof}

 In conclusion, let us restate the simplified descriptions of
some elements of the picture of Theorem~\ref{derived-equivalence}
provided by some results of Section~\ref{G-section}.

 The functor $\Psi_G\:G\smooth_k\rarrow G\contra_k$ is always simply
$\Psi_G\:\bM\longmapsto \Hom_{k[G]}(\bS,\bM)$, where $\bS=k(G)$ is
the smooth $G\times G$\+module of compactly supported locally
constant $k$\+valued functions on~$G$.

 The functor $\Phi_G\:G\contra_k\rarrow G\smooth_k$ is, generally
speaking, defined as the contratensor product
$\Phi_G\:\bP\longmapsto\bS\Ocn_{k,G}\bP$.
 However, when the group $G$ has a compact open subgroup $H$ which
is a finitely generated pro-$p$-group, and $k$ is a field of
characteristic~$p$, the contratensor product does not differ from
the conventional tensor product, $\Phi_G\:\bP\longmapsto\bS\ot_{k[G]}
\bP$ (see Corollary~\ref{locally-fg-pro-p-contratensor-iso}).
 In particular, this is applicable to any $p$\+adic Lie group~$G$.

 When $G$ is a $p$\+adic Lie group, the derived functor $\boR\Psi_G$
can be computed as the conventional $\boR\Hom$ over
the ring~$\fT^\rop$,
$$
 \boR\Psi_G\:\bM^\bu\.\longmapsto\.\boR\Hom_{\fT^\rop}(\bS,\bM^\bu),
$$
and the derived functor $\boL\Phi_G$ can be computed as
the conventional derived tensor product over the ring~$\fT$,
$$
 \boL\Phi_G\:\bP^\bu\.\longmapsto\.\bS\ot_\fT^\boL\bP^\bu.
$$
 This follows essentially from
Corollary~\ref{adjusted-to-ext-tor-over-T}.

\subsection{Duality adjunction} \label{duality-adjunction-subsecn}
 Let $k$~be a field and $G$ be a locally profinite group locally of
finite $k$\+cohomological dimension.
 Let $V$ be a fixed $k$\+vector space.
 Then the contravariant exact functor $\Hom_k({-},V)\:(G\smooth_k)^\sop
\rarrow G\contra_k$ induces a contravariant triangulated functor
between the derived categories
\begin{equation} \label{G-hom-V}
 \sD^\star(G\smooth_k)^\sop\lrarrow\sD^\star(G\contra).
\end{equation}
 Composing the contravariant triangulated functor~\eqref{G-hom-V}
with the triangulated equivalence $\boL\Phi_G\:\sD^\star(G\contra_k)
\rarrow\sD^\star(G\smooth_k)$\,
\eqref{loc-finite-cohomol-dim-derived-equivalence}, we obtain
a contravariant triangulated functor
\begin{equation} \label{Delta-G-V}
 \Delta_G^V\:\sD^\star(G\smooth_k)^\sop\lrarrow
 \sD^\star(G\smooth_k).
\end{equation}

 The following result is a particular case of the discussion in
Section~\ref{duality-adjunction-introd} of the introduction.

\begin{prop} \label{derived-self-adjunction-prop}
 The duality functor~\eqref{Delta-G-V} is \emph{right self-adjoint},
that is, for any two complexes of smooth $G$\+modules\/
$\bM^\bu$ and\/ $\bN^\bu\in\sD^\star(G\smooth_k)$, there is a natural
isomorphism of Hom spaces
\begin{equation} \label{G-self-adjunction-isomorphism}
 \Hom_{\sD^\star(G\smooth_k)}(\bM^\bu,\Delta_G^V(\bN^\bu))\.\simeq\.
 \Hom_{\sD^\star(G\smooth_k)}(\bN^\bu,\Delta_G^V(\bM^\bu)).
\end{equation}
\end{prop}

\begin{proof}[Sketch of proof]
 The argument is based on the results of the book~\cite{Psemi}
and uses the formalism of semi-infinite homology and cohomology
of locally profinite groups (which is a particular case of
the semi-infinite homology and cohomology of semialgebras over
coalgebras).
 Both side of the desired
isomorphism~\eqref{G-self-adjunction-isomorphism}
are computed as a certain semi-infinite (co)homology space.

 We choose a compact open subgroup $H\subset G$ of finite
$k$\+cohomological dimension and consider
the semialgebra $\bS=k(G)$ over the coalgebra $\C=k(H)$
\cite[Section~E.1]{Psemi}, \cite[Example~2.6]{Prev}.
 Then smooth $G$\+modules over~$k$ are the same thing as
$\bS$\+semimodules, and $G$\+contramodules over~$k$ are the same
thing as $\bS$\+semicontramodules.
 The involutive anti-authomorphisms of $\C$ and $\bS$ induced by
the inverse element maps $H\rarrow H$ and $G\rarrow G$ identify
the left and right $\bS$\+semimodules.

 For any complex of left semimodules $\bM^\bu$ and any complex of
right semimodules $\bN^\bu$ over a semialgebra $\bS$, the construction
of~\cite[Section~2.7]{Psemi} produces a cohomologically graded
$k$\+vector space of semi-infinite homology
$$
 \SemiTor^\bS(\bN^\bu,\bM^\bu)\in\sD(k\vect).
$$
 In the situation at hand, when $\bS=k(G)$ and both $\bM^\bu$ and
$\bN^\bu$ are complexes of smooth $G$\+modules over~$k$,
the functor $\SemiTor^\bS$ can be thought of as a double-sided
derived functor of the functor of \emph{$(G,H)$\+semiinvariants}
$(\bN^\bu\ot_k\bM^\bu)_{G,H}$ in the tensor product of complexes
of smooth $G$\+modules $\bN^\bu\ot_k\bM^\bu$ \cite[Section~E.2]{Psemi}.
 In particular, one has
$$
 \SemiTor^\bS(\bN^\bu,\bM^\bu)\simeq\SemiTor^\bS(\bM^\bu,\bN^\bu).
$$

 Furthermore, for any complex of left semimodules $\bM^\bu$ and any
complex of left semicontramodules $\bP^\bu$ over a semialgebra $\bS$,
the construction of~\cite[Section~4.7]{Psemi} produces a graded
$k$\+vector space of semi-infinite cohomology
$$
 \SemiExt_\bS(\bM^\bu,\bP^\bu)\in\sD(k\vect).
$$
 Generally speaking, for a locally profinite group $G$ and the related
semialgebra $\bS=k(G)$ over the coalgebra $\C=k(H)$, the functors
SemiTor and SemiExt depend on the choice of a compact open subgroup
$H\subset G$; but when $G$ is locally of finite $k$\+cohomological
dimension and $H$ is chosen to be of finite $k$\+cohomological dimension,
they don't.
 (We do not need to use this fact.)

 According to~\cite[last formula of Section~4]{Psemi}, for any complexes
of smooth $G$\+modules $\bN^\bu$ and $\bM^\bu$ we have
$$
 \Hom_k(\SemiTor^\bS(\bN^\bu,\bM^\bu),\.V)\simeq
 \SemiExt_\bS(\bM^\bu,\.\Hom_k(\bN^\bu,V)).
$$
 Furthermore, by~\cite[Corollary~6.6(a)]{Psemi}, for any complex
of smooth $G$\+modules $\bM^\bu$ and any complex of $G$\+contramodules
$\bP^\bu$ over~$k$ we have
$$
 \SemiExt_\bS(\bM^\bu,\bP^\bu)\simeq
 \Ext_\bS(\bM^\bu,\.\boL\Phi_\bS(\bP^\bu)).
$$
 Combining the natural isomorphisms above, we get
\begin{multline} \label{adjunction-computed}
\Ext_\bS(\bM^\bu,\>\boL\Phi_\bS\Hom_k(\bN^\bu,V))\.\simeq\.
\SemiExt_\bS(\bM^\bu,\.\Hom_k(\bN^\bu,V)) \\ \.\simeq\.
\Hom_k(\SemiTor^\bS(\bN^\bu,\bM^\bu),\.V) \.\simeq\.
\Hom_k(\SemiTor^\bS(\bM^\bu,\bN^\bu),\.V) \\ \.\simeq\.
\SemiExt_\bS(\bN^\bu,\.\Hom_k(\bM^\bu,V)) \.\simeq\.
\Ext_\bS(\bN^\bu,\>\boL\Phi_\bS\Hom_k(\bM^\bu,V)).
\end{multline}
 Passing to the degree-zero components in the isomorphism of
graded vector spaces~\eqref{adjunction-computed}, we obtain
the desired adjunction
isomorphism~\eqref{G-self-adjunction-isomorphism}.
\hbadness=1100
\end{proof}

 The self-adjunction of Proposition~\ref{derived-self-adjunction-prop}
can be also described in terms of the adjunction morphisms rather than
adjunction isomorphisms.
 For any complex of smooth $G$\+modules $\bM^\bu\in
\sD^\star(G\smooth_k)$ there is a natural adjunction morphism
\begin{equation} \label{derived-adjunction-morphism}
 \bM^\bu\lrarrow\Delta^V_G\Delta^V_G(\bM^\bu)
\end{equation}
in the derived category $\sD^\star(G\smooth_k)$ which corresponds to
the identity morphism $\Delta^V_G(\bM^\bu)\rarrow\Delta^V_G(\bM^\bu)$
under the adjunction isomorphism~\eqref{G-self-adjunction-isomorphism}.
 The natural morphism~\eqref{derived-adjunction-morphism} can be
constructed as follows.

 Let $H\subset G$ be a compact open subgroup of finite
$k$\+cohomological dimension.
 Up to an isomorphism in $\sD^\star(G\smooth_k)$, we can assume
$\bM^\bu$ to be a complex of smooth $G$\+modules over~$k$ whose terms
are injective as discrete $H$\+modules.
 Then $\bP^\bu=\Psi_G(\bM^\bu)$ and $\bQ^\bu=\Hom_k(\bM^\bu,V)$
are complexes of $G$\+contramodules over~$k$ whose terms are
projective as $H$\+contramodules over~$k$.
 By Proposition~\ref{underived-semico-semicontra-prop}, we have
a natural isomorphism of complexes of smooth $G$\+modules
$\Phi_G(\bP^\bu)\simeq\bM^\bu$.

 According to the formula~\eqref{G-contratensor-hom-V}, we have
a natural isomorphism of complexes of $k$\+vector spaces
$\Hom_k(\bM^\bu\Ocn_{k,G}\bQ^\bu,\>V)\simeq
\Hom^G_k(\bQ^\bu,\.\Hom_k(\bM^\bu,V))=\Hom^G_k(\bQ^\bu,\bQ^\bu)$.
 Hence the identity morphism of complexes of $G$\+contramodules
$\bQ^\bu\rarrow\bQ^\bu$ corresponds to a morphism of complexes of
$k$\+vector spaces {\hbadness=2100
\begin{equation}
 \bM^\bu\Ocn_{k,G}\bQ^\bu\lrarrow V.
\end{equation}

}\begin{lem}
 For any locally profinite group $G$, any field~$k$, and any two
$G$\+contra\-modules\/ $\bP$ and\/ $\bQ$ over~$k$, there is a natural
isomorphism of $k$\+vector spaces
$$
 \Phi_G(\bP)\Ocn_{k,G}\bQ\.\simeq\.\Phi_G(\bQ)\Ocn_{k,G}\bP.
$$
\end{lem}

\begin{proof}
 We have to construct an isomorphism $(\bS\Ocn_{k,G}\bP)\Ocn_{k,G}\bQ
\simeq(\bS\Ocn_{k,G}\bQ)\Ocn_{k,G}\bP$.
 Following the definition of the contratensor product in
Section~\ref{contratensor-loc-profinite-subsecn}, both the vector
spaces in question can be identified with the cokernel of a certain
natural map
$$
 t\:\bS\ot_k\bP[[G]]\ot_k\bQ\.\oplus\.\bS\ot_k\bP\ot_k\bQ[[G]]
 \lrarrow\bS\ot_k\bP\ot_k\bQ.
$$
 Let us explain what the map~$t$ is.
 It has two components $t_1\:\bS\ot_k\bP[[G]]\ot_k\bQ\rarrow
\bS\ot_k\bP\ot_k\bQ$ and $t_2\:\bS\ot_k\bP\ot_k\bQ[[G]]\rarrow
\bS\ot_k\bP \ot_k\bQ$, each of which is the difference of two natural
maps, $t_i=r_i-s_i$.
 The maps~$r_1$ and~$r_2$ are induced by the $G$\+contraaction
maps~$\pi_\bP$ and~$\pi_\bQ$, respectively.
 The maps~$s_1$ and~$s_2$ are constructed using the smooth $G$\+action
in $\bS$, as explained in
Section~\ref{contratensor-loc-profinite-subsecn}.
 There are two such smooth actions, the left and the right one; and
one uses, say, the right action to construct the map
$\bS\ot_k\bP[[G]]\rarrow\bS\ot_k\bP$ inducing the map~$s_1$,
and the left action to construct the map
$\bS\ot_k\bQ[[G]]\rarrow\bS\ot_k\bQ$ inducing the map~$s_2$.
\end{proof}

 Returning to the situation at hand, we now have
$$
 \bM^\bu\Ocn_{k,G}\bQ^\bu\simeq\Phi_G(\bP^\bu)\Ocn_{k,G}\bQ^\bu
 \simeq\Phi_G(\bQ^\bu)\Ocn_{k,G}\bP^\bu.
$$
 Set $\bN^\bu=\Phi_G(\bQ^\bu)$.
 We have constructed a morphism of complexes of $k$\+vector spaces
\begin{equation} \label{N-P-V}
 \bN^\bu\Ocn_{k,G}\bP^\bu\lrarrow V.
\end{equation}

 Once again, by~\eqref{G-contratensor-hom-V}, we have a natural
isomorphism of complexes of $k$\+vector spaces
$\Hom_k(\bN^\bu\Ocn_{k,G}\bP^\bu,\>V)\simeq\Hom_k^G(\bP^\bu,\.
\Hom_k(\bN^\bu,V))$.
 In view of this isomorphism, the map~\eqref{N-P-V} corresponds
to a morphism of complexes of $G$\+contramodules over~$k$
$$
 \bP^\bu\lrarrow\Hom_k(\bN^\bu,V).
$$
 Applying the functor $\Phi_G$, we obtain
\begin{equation} \label{adjunction-morphism-computed}
 \bM^\bu\simeq\Phi_G(\bP^\bu)\lrarrow\Phi_G\Hom_k(\bN^\bu,V)
 =\Phi_G\Hom_k(\Phi_G\Hom_k(\bM^\bu,V),V).
\end{equation}

 It remains to recall that $\bQ^\bu$ is a complex of
$G$\+contramodules whose terms are projective as $H$\+contramodules
over~$k$, hence $\bN^\bu=\Phi_G(\bQ^\bu)$ is a complex of
smooth $G$\+modules whose terms are injective as discrete $H$\+modules
over~$k$, hence $\Hom_k(\bN^\bu,V)$ is also a complex of
$G$\+contramodules whose terms are projective as $H$\+contramodules
over~$k$.
 Thus the complexes $\Hom_k(\bM^\bu,V)$ and
$\Hom_k(\Phi_G\Hom_k(\bM^\bu,V),V)$ are adjusted to the derived functor
$\boL\Phi_G$, that is, one can compute $\boL\Phi_G$ for these
complexes simply by applying the functor~$\Phi_G$.
{\hbadness=1500\par}

 The map~\eqref{adjunction-morphism-computed} provides the desired
self-adjunction morphism~\eqref{derived-adjunction-morphism}, and
our construction is finished.

\Section{Admissibility Conditions}
\label{admissibility-secn}

\subsection{Quasi-finite comodules and contramodules}
\label{quasi-finite-subsecn}
 Let $\C$ be a (coassociative, counital) coalgebra over a field~$k$.
 The definitions of finitely cogenerated $\C$\+comodules and finitely
generated $\C$\+contramodules are straightforward: a left $\C$\+comodule
$\M$ is said to be \emph{finitely cogenerated} if it is a subcomodule of
a cofree left $\C$\+comodule $\C\ot_k V$ with a finite-dimensional
vector space of cogenerators~$V$.
 Similarly, a left $\C$\+contramodule $\P$ is said to be \emph{finitely
generated} if it is a quotient contramodule of a free left
$\C$\+contramodule $\Hom_k(\C,V)$ with a finite-dimensional vector
space of generators~$V$.
 However, these finiteness conditions (discussed in detail
in~\cite[Section~2]{Pmc}) are sometimes too restrictive, and
\emph{quasi-finiteness} conditions are preferable.

 Let $\C$ and $\E$ be (coassociative, counital) coalgebras over
a field~$k$, and let $\E\rarrow\C$ be a coalgebra morphism.
 Then any left $\E$\+comodule $\N$ can be also considered as a left
$\C$\+comodule, with the $\C$\+coaction map defined as the composition
$\N\rarrow\E\ot_k\N\rarrow\C\ot_k\N$.
 So there is an exact, faithful functor of ``corestriction of scalars''
$\E\comodl\rarrow\C\comodl$.
 Similarly, any left $\E$\+contramodule $\Q$ can be considered as
a left $\C$\+contramodule, with the $\C$\+contraaaction map provided
by the composition $\Hom_k(\C,\Q)\rarrow\Hom_k(\E,\Q)\rarrow\Q$.
 So there is an exact, faithful functor of ``contrarestriction of
scalars'' $\E\contra\rarrow\C\contra$.

 We are interested in the particular case when $\E$ is a subcoalgebra
in~$\C$.
 In this case, the functors $\E\comodl\rarrow\C\comodl$ and
$\E\contra\rarrow\C\contra$ are fully faithful.
 We will say that a left $\C$\+comodule $\M$ \emph{is a left\/
$\E$\+comodule} if it belongs to the essential image of the functor
$\E\comodl\rarrow\C\comodl$.
 This means that the image of the coaction map $\M\rarrow\C\ot_k\M$
is contained in the subspace $\E\ot_k\M\subset\C\ot_k\M$.
 Similarly, a left $\C$\+contramodule $\P$ is said to \emph{be a left\/
$\E$\+contramodule} if it belongs to the essential image of
the functor $\E\contra\rarrow\C\contra$.
 This means that the contraaction map $\Hom_k(\C,\P)\rarrow\P$
factorizes through the surjection $\Hom_k(\C,\P)\rarrow\Hom_k(\E,\P)$
induced by the inclusion $\E\rarrow\C$.

 Any left $\C$\+comodule $\M$ has a unique maximal subcomodule which
is an $\E$\+comod\-ule.
 This subcomodule, denoted by ${}_\E\M\subset\M$, can be constructed
as the full preimage of the subspace $\E\ot_k\M\subset\C\ot_k\M$
under the coaction map $\M\rarrow\C\ot_k\M$.
 The functor $\M\longmapsto{}_\E\M$ is right adjoint to the functor
of corestriction of scalars $\E\comodl\rarrow\C\comodl$ discussed above.
 Similarly, any left $\C$\+contramodule $\P$ has a unique maximal
quotient contramodule which is an $\E$\+contramodule.
 This quotient contramodule, denoted by ${}^\E\P$, can be constructed
as the cokernel of the composition $\Hom_k(\C/\E,\P)\rarrow\Hom_k(\C,\P)
\rarrow\P$ of the injective map $\Hom_k(\C/\E,\P)\rarrow\Hom_k(\C,\P)$
induced by the natural surjection $\C\rarrow\C/\E$ with the contraaction
map $\Hom_k(\C,\P)\rarrow\P$.
 The functor $\P\longmapsto{}^\E\P$ is left adjoint to the functor of
contrarestriction of scalars $\E\contra\rarrow\C\contra$ discussed
above.
 The maximal $\E$\+subcomodule of a right $\C$\+comodule $\M$ is denoted
by $\M_\E\subset\M$.

 A left $\C$\+comodule $\M$ is said to be \emph{quasi-finitely
cogenerated} (or simply \emph{quasi-finite}) if the vector space
of $\C$\+comodule homomorphisms $\Hom_\C(\L,\M)$ is finite-dimensional
for any finite-dimensional left $\C$\+comodule~$\L$.
 This definition goes back to the classical paper of
Takeuchi~\cite{Tak}; later expositions can be found, e.~g.,
in~\cite{GTNT} and~\cite[Section~2]{Ppc}.
 A left $\C$\+comodule $\M$ is quasi-finitely cogenerated if and only
if the left $\E$\+comodule ${}_\E\M$ is finite-dimensional for every
finite-dimensional subcoalgebra $\E\subset\C$ \cite[Lemma~2.1]{Ppc}.
 Similarly, a left $\C$\+contramodule $\P$ is said to be
\emph{quasi-finitely generated} (or just \emph{quasi-finite}) if
the vector space of $\C$\+contramodule homomorphisms $\Hom^\C(\P,\fL)$
is finite-dimensional for any finite-dimensional left
$\C$\+contramodule $\fL$, or equivalently if the left $\E$\+contramodule
${}^\E\P$ is finite-dimensional for every finite-dimensional
subcoalgebra $\E\subset\C$ \cite[Lemma~2.5]{Ppc}.

\begin{lem} \label{psi-pseudocompact}
 For any quasi-finitely cogenerated left comodule\/ $\M$ over
a coalgebra\/ $\C$, there is a natural, functorially defined
pseudo-compact left\/ $\C\spcheck$\+module structure on
the $k$\+vector space $\Hom_\C(\C,\M)$ such that the left\/
$\C$\+contramodule structure of\/ $\Psi_\C(\M)=\Hom_\C(\C,\M)$
discussed in Section~\ref{cotensor-contratensor-subsecn} underlies
this pseudo-compact left\/ $\C\spcheck$\+module structure.
\end{lem}

\begin{proof}
 In fact, for any coalgebra $\D$ and any $\C$\+$\D$\+bicomodule $\K$
the left $\D$\+contra\-module structure of the Hom space
$\Hom_\C(\K,\M)$ for a quasi-finitely cogenerated left $\C$\+comodule
$\M$ is associated with a certain functorially defined
pseudo-compact left $\D\spcheck$\+module structure.
 This is essentially the result of~\cite[Section~1.7]{Tak}, where
a right $\D$\+comodule denoted by $h_{\C\textrm-}(\M,\K)$ is constructed
so that $\Hom_\C(\K,\M)=h_{\C\textrm-}(\M,\K)\spcheck$.
 As no contramodules are mentioned in~\cite{Tak}, we give a sketch of
the argument in the situation at hand (with $\K=\C=\D$).

 The coalgebra $\C$ is the union of its finite-dimensional subcoalgebras
$\E$, that is $\C=\varinjlim_\E\E$.
 Hence $\Hom_\C(\C,\M)=\varprojlim_\E\Hom_\C(\E,\M)$.
 The left $\C$\+contramodule structure on $\Hom_\C(\C,\M)$ (induced
by the right $\C$\+comodule structure on~$\C$) is
the projective limit of the left $\C$\+contramodule structures
on $\Hom_\C(\E,\M)$, which in turn arise from left $\E$\+contramodule
structures (induced by the right $\E$\+comodule structure on~$\E$).
 By assumption, the $k$\+vector space $\Q=\Hom_\C(\E,\M)$ is
finite-dimensional.
 Any finite-dimensional left contramodule $\Q$ over a finite-dimensional
coalgebra $\E$ is dual to a finite-dimensional right $\E$\+comodule
$\Q\spcheck$; so the formula
$$
 \Hom_\C(\C,\M)=\varprojlim\nolimits_\E\Hom_\C(\E,\M)=
 (\varinjlim\nolimits_\E \Hom_\C(\E,\M)\spcheck)\spcheck
$$
represents $\Hom_\C(\C,\M)$ as the dual vector space to a certain
right $\C$\+comodule $\N=\varinjlim_\E\Hom_\C(\E,\M)\spcheck$.
 For any right $\C$\+comodule $\N$, the dual vector space $\N\spcheck$
is a pseudo-compact left $\C\spcheck$\+module.
 In other words, any finite-dimensional contramodule $\Q$ over
a finite-dimensional coalgebra $\E$ is naturally a pseudo-compact
$\E\spcheck$\+module, and the projective limit of pseudo-compact
modules is a pseudo-compact module.
\end{proof}

\begin{lem} \label{qf-co-generated-dualization}
 A right comodule\/ $\N$ over a coalgebra $\C$ is quasi-finitely
cogenerated if and only if the left contramodule\/
$\N\spcheck=\Hom_k(\N,k)$ over\/ $\C$ is quasi-finitely generated.
\end{lem}

\begin{proof}
 For any subcoalgebra $\E\subset\C$ and any $k$\+vector space $V$, there
is a natural isomorphism of left $\E$\+contramodules
$$
 {}^\E\Hom_k(\N,V)\simeq\Hom_k(\N_\E,V),
$$
as one can see, e.~g., from the constructions of the $\E$\+comodule
${}_\E\M$ and the $\E$\+contra\-module ${}^\E\P$ above.
 In particular, the left $\E$\+contramodule ${}^\E(\N\spcheck)\simeq
(\N_\E)\spcheck$ is finite-dimensional if and only if the right
$\E$\+comodule $\N_\E$ is.
\end{proof}

\subsection{Quasi-finitely copresented comodules and
quasi-finitely presented contramodules}
\label{qf-co-presented-subsecn}
 Let $\C$ be a coalgebra over a field~$k$.
 It is well-known that any quasi-finitely cogenerated $\C$\+comodule is
a subcomodule of a quasi-finitely cogenerated injective $\C$\+comodule
(see, e.~g., \cite[Lemma~2.2(b)]{Ppc}).
 Similarly, any quasi-finitely generated $\C$\+contramodule is
a quotient contramodule of a quasi-finitely generated projective
$\C$\+contramodule~\cite[Lemma~2.6(b)]{Ppc}.

 A $\C$\+comodule is said to be \emph{quasi-finitely copresented} if it
is the kernel of a morphism between two quasi-finitely cogenerated
injective $\C$\+comodules.
 A $\C$\+contramodule is said to be \emph{quasi-finitely presented} if
it is the cokernel of a morphism between two quasi-finitely generated
projective $\C$\+contramodules.
 We denote the full subcategory of quasi-finitely copresented left
$\C$\+comodules by $\C\comodl_\qfc\subset\C\comodl$, the full
subcategory of quasi-finitely copresented right $\C$\+comodules by
$\comodrqfc\C\subset\comodr\C$, and the full subcategory of
quasi-finitely presented left $\C$\+contramodules by
$\C\contra_\qfp\subset\C\contra$.

 Obviously, an injective $\C$\+comodule is quasi-finitely cogenerated
if and only if it is quasi-finitely copresented; and a projective
$\C$\+contramodule is quasi-finitely generated if and only if it is
quasi-finitely presented.

\begin{prop} \label{qf-co-contra}
 The equivalence between the additive categories of injective left\/
$\C$\+comodules and projective left\/ $\C$\+contramodules%
~(\ref{phi-psi-C-adjunction}\+-\ref{underived-co-contra})
$$
 \Psi_\C\:\C\comodl^\inj\.\simeq\.\C\contra^\proj:\!\Phi_\C
$$
restricts to an equivalence between the full subcategories of
quasi-finitely cogenerated injective left\/ $\C$\+comodules and
quasi-finitely generated projective left\/ $\C$\+contramodules,
$$
 \C\comodl_\qfc^\inj\.\simeq\.\C\contra_\qfp^\proj.
$$
\end{prop}

\begin{proof}
 For any subcoalgebra $\E\subset\C$, there is a commutative diagram of
additive functors and equivalences~\cite[Section~7.1.4]{Psemi},
\cite[Section~5.4]{Pkoszul}
$$
\begin{diagram}
\node{\Psi_\C\:\C\comodl}\arrow{e,=}\arrow{s}
\node{\C\contra:\!\Phi_\C}\arrow{s} \\
\node{\Psi_\E\:\E\comodl}\arrow{e,=}
\node{\E\contra:\!\Phi_\E}
\end{diagram}
$$
where the leftmost vertical functor takes a left $\C$\+comodule $\M$
to the left $\E$\+comodule ${}_\E\M$, while the rightmost vertical
functor takes a left $\C$\+contramodule $\P$ to the left
$\E$\+contramodule ${}^\E\P$.
 It remains to observe that over a finite-dimensional coalgebra $\E$
the functors $\Psi_\E=\Hom_\E(\E,{-})$ and $\Phi_\E=\E\ocn_\E{-}$
take finite-dimensional $\E$\+comodules to finite-dimensional
$\E$\+contramodules and vice versa.
\end{proof}

\begin{prop} \label{qf-co-pres-duality}
 The dualization functor
$$
 \N\longmapsto\N\spcheck=\Hom_k(\N,k)\:\comodr\C\lrarrow\C\contra
$$
restricts to an anti-equivalence between the full subcategories of
quasi-finitely cogenerated injective right\/ $\C$\+comodules and
quasi-finitely generated projective left\/ $\C$\+contramodules,
$$
 (\comodrqfcinj\C)^\sop\.\simeq\.\C\contra_\qfp^\proj.
$$
 Moreover, the same functor restricts to an anti-equivalence between
the full subcategories of quasi-finitely copresented right\/
$\C$\+comodules and quasi-finitely presented left\/ $\C$\+contramodules,
$$
 (\comodrqfc\C)^\sop\.\simeq\.\C\contra_\qfp.
$$
 Thus any quasi-finitely presented left\/ $\C$\+contramodule carries
a natural, functorially defined structure of a pseudo-compact left\/
$\C\spcheck$\+module.
\end{prop}

\begin{proof}
 This is~\cite[Proposition~2.8(a)]{Ppc}.
 Essentially, one first proves the first assertion and then deduces
the second one by passing to the (co)kernels of morphisms.
 The particular case for finitely (co)presented co/contramodules in
place of the quasi-finitely (co)presented ones can be found
in~\cite[Proposition~2.9(a)]{Pmc}.
\end{proof}

\subsection{Coextension and contraextension of scalars}
\label{extension-of-scalars-subsecn}
 Let $\C\rarrow\D$ be a morphism of coalgebras.
 Then, according to the discussion in
Section~\ref{quasi-finite-subsecn}, any left $\C$\+comodule can be
considered as a left $\D$\+comodule and any left $\C$\+contramodule
can be considered as a left $\D$\+contramodule.
 So there are exact, faithful functors of corestriction of scalars
$\C\comodl\rarrow\D\comodl$ and contrarestriction of scalars
$\C\contra\rarrow\D\contra$.

 The functor of corestriction of scalars $\C\comodl\rarrow\D\comodl$
has a right adjiont functor of ``coextension of scalars''
\cite[Section~4.8]{Pkoszul}, \cite[Section~7.1.2]{Psemi} assigning
to a left $\D$\+comodule $\N$ the left $\C$\+comodule ${}_\C\N$,
which can be constructed as the cotensor product ${}_\C\N=\C\oc_\D\N$
(see the discussion of cotensor products in
Section~\ref{cotensor-contratensor-subsecn}).
 The coextension of scalars can be also constructed as the unique
left exact functor $\D\comodl\rarrow\C\comodl$ taking the cofree left
$\D$\+comodule $\D\ot_kV$ to the cofree left $\C$\+comodule $\C\ot_kV$
for every $k$\+vector space~$V$.

 Similarly, the functor of contrarestriction of scalars $\C\contra
\rarrow\D\contra$ has a left adjoint functor of ``contraextension of
scalars'' \cite[Section~4.8]{Pkoszul}, \cite[Section~7.1.2]{Psemi}
assigning to a left $\D$\+contramodule $\Q$ the left
$\C$\+contramodule ${}^\C\Q$, which can be constructed as the vector
space of cohomomorphisms ${}^\C\Q=\Cohom_\D(\C,\Q)$ (see
Section~\ref{cohom-subsecn}) with the left $\C$\+contramodule structure
induced by the right $\C$\+comodule structure on~$\C$.
 The contraextension of scalars can be also constructed as the unique
right exact functor $\D\contra\rarrow\C\contra$ taking the free left
$\D$\+contramodule $\Hom_k(\D,V)$ to the free left $\C$\+contramodule
$\Hom_k(\C,V)$ for every $k$\+vector space~$V$.

 The following two lemmas will be useful in
Section~\ref{contra-admissibility-subsecn}.

\begin{lem} \label{extension-scalars-presentability}
 Let $\C\rarrow\D$ be a morphism of coalgebras over a field~$k$.
 Then \par
\textup{(a)} the functor of coextension of scalars\/ $\D\comodl
\rarrow\C\comodl$ takes quasi-finitely cogenerated\/ $\D$\+comodules to quasi-finitely cogenerated\/ $\C$\+comodules; \par
\textup{(b)} the functor of coextension of scalars\/ $\D\comodl
\rarrow\C\comodl$ takes quasi-finitely copresented\/ $\D$\+comodules to
quasi-finitely copresented\/ $\C$\+comodules; \par
\textup{(c)} the functor of contraextension of scalars\/
$\D\contra\rarrow\C\contra$ takes quasi-finitely generated\/
$\D$\+contramodules to quasi-finitely generated\/ $\C$\+contramodules;
\par
\textup{(d)} the functor of contraextension of scalars\/
$\D\contra\rarrow\C\contra$ takes quasi-finitely presented\/
$\D$\+contramodules to quasi-finitely presented\/
$\C$\+contramodules.
\end{lem}

\begin{proof}
 Part~(a): let $\N$ be a quasi-finitely cogenerated left $\D$\+comodule
and $\L$ be a finite-dimensional left $\C$\+comodule.
 Then $\L$ is also a finite-dimensional left $\D$\+comodule, so
the $k$\+vector space $\Hom_\D(\L,\N)$ is finite-dimensional.
 Now the adjunction isomorphism $\Hom_\C(\L,{}_\C\N)\simeq
\Hom_\D(\L,\N)$ shows that the vector space $\Hom_\C(\L,{}_\C\N)$ is
finite-dimensional.

 Part~(b): we observe that the functor of coextension of scalars
$\N\longmapsto{}_\C\N$ is right adjoint to an exact functor, hence
it is left exact and takes injective objects to injective objects. 
 Now let $\N$ be a quasi-finitely copresented left $\D$\+comodule;
so $\N$ is the kernel of a morphism of quasi-finitely cogenerated
injective left $\D$\+comodules $\I\rarrow\J$.
 Then ${}_\C\N$ is the kernel of the induced morphism of injective
left $\C$\+comodules ${}_\C\I\rarrow{}_\C\J$.
 By part~(a), the $\C$\+comodules ${}_\C\I$ and ${}_\C\J$ are also
quasi-finitely cogenerated.

 The proofs of parts~(c) and~(d) are dual-analogous to (a) and~(b),
respectively.
\end{proof}

\begin{lem} \label{cobar-fragment-lemma}
\textup{(a)} For any coalgebra morphism\/ $\C\rarrow\D$ and any left\/
$\C$\+comodule\/ $\M$, there is a natural exact sequence of left\/
$\C$\+comodules
\begin{equation} \label{relative-cobar-fragment}
 0\lrarrow\M\lrarrow\C\oc_\D\M\lrarrow\C\oc_\D\C\oc_\D\M.
\end{equation}
 Here the left\/ $\C$\+comodule\/ $\C\oc_\D\M$ can be obtained by
applying to\/ $\M$ the composition of the functors of corestriction and
coextension of scalars\/ $\C\comodl\rarrow\D\comodl\rarrow\C\comodl$.
 The left\/ $\C$\+comodule\/ $\C\oc_\D\C\oc_\D\M$ is obtained by applying
the same composition of functors twice to the left\/
$\C$\+comodule\/~$\M$. \par
\textup{(b)} For any coalgebra morphism\/ $\C\rarrow\D$ and any left\/
$\C$\+contramodule\/ $\P$, there is a natural exact sequence of left\/
$\C$\+contramodules
\begin{multline*} %\label{relative-contrabar-fragment}
 \Cohom_\D(\C\oc_\D\C,\>\P)\simeq\Cohom_\D(\C,\.\Cohom_\D(\C,\P)) \\
 \lrarrow\Cohom_\D(\C,\P)\lrarrow\P\lrarrow0.
\end{multline*}
 Here the left\/ $\C$\+contramodules\/ $\Cohom_\D(\C,\P)$ and\/
$\Cohom_\D(\C,\Cohom_\D(\C,\P))$ can be obtained by applying to\/ $\P$
the composition of the functors of contrarestriction and contraextension
of scalars\/ $\C\contra\rarrow\D\contra\rarrow\C\contra$
(one or two times).
\end{lem}

\begin{proof}
 Part~(a): in the sequence~\eqref{relative-cobar-fragment}, the map
$\M\rarrow\C\oc_\D\M$ comes from the left $\C$\+coaction map
$\M\rarrow\C\ot_k\M$, whose image is contained in the subspace
$\C\oc_\D\M\subset\C\ot_k\M$.
 In fact, the left coaction $\M\rarrow\C\ot_k\M$ is an injective map
whose image is equal to $\C\oc_\C\M\subset\C\oc_\D\M\subset\C\ot_k\M$.

 In particular, the comultiplication $\C\rarrow\C\ot_k\C$ is
an injective map whose image is equal to $\C\oc_\C\C\subset\C\oc_\D\C
\subset\C\ot_k\C$; so there is also a map $\C\rarrow\C\oc_\D\C$.
 The map $\C\oc_\D\M\rarrow\C\oc_\D\C\oc_\D\M$ in
the sequence~\eqref{relative-cobar-fragment} is the difference of two
maps $\C\oc_\D\M\rightrightarrows\C\oc_\D\C\oc_\D\M$, one of which is
induced by the map $\C\rarrow\C\oc_\D\C$ and the other one by
the map $\M\rarrow\C\oc_\D\M$.

 The assertion that the composition of the two maps
in~\eqref{relative-cobar-fragment} vanishes is a restatement of
the coassociativity equation for the left $\C$\+coaction in~$\M$.
 The assertion that the sequence~\eqref{relative-cobar-fragment}
is exact simply means that $\M$ is the kernel of the map
$\C\oc_\D\M\rarrow\C\oc_\D\C\oc_\D\M$.
 Now we have $\C\oc_\D\M\subset\C\ot_k\M$ and $\C\oc_\D\C\oc_\D\M
\subset\C\ot_k\C\ot_k\M$, so it suffices to check that $\M$ is
the kernel of the difference of two maps $\C\ot_k\M
\rightrightarrows\C\ot_k\C\ot_k\M$ induced by the coaction and
comultiplication maps.
 The latter assertion is a restatement of the natural isomorphism
$\M\simeq\C\oc_\C\M$ \cite[Corollary~2.2]{EM},
\cite[Section~1.2.1]{Psemi}.

 Part~(b) is dual-analogous and uses the natural isomorphism
$\P\simeq\Cohom_\C(\C,\P)$ \cite[Section~3.2.1]{Psemi}.
 We omit the details.
\end{proof}

\subsection{Quasi-contra-Noetherian coalgebras}
\label{quasi-contra-Noetherian-subsecn}
 For any coalgebra $\C$, the class of all quasi-finitely copresented
right $\C$\+comodules is closed under the kernels and extensions in
$\comodr\C$ \cite[Lemma~2.4(a\+b)]{Ppc}.
 Similarly, the class of all quasi-finitely presented left
$\C$\+contramodules is closed under the cokernels and extensions in
$\C\contra$ \cite[Lemma~2.7(a\+b)]{Ppc}.
 A coalgebra $\C$ is called \emph{right quasi-cocoherent} if the class of
all quasi-finitely copresented right $\C$\+comodules is closed under
cokernels in $\comodr\C$, or equivalently (in view of
Proposition~\ref{qf-co-pres-duality}), the class of quasi-finitely
presented left $\C$\+contramodules is closed under kernels in
$\C\contra$.
 Over a right quasi-cocoherent coalgebra $\C$, the categories of
quasi-finitely copresented right comodules and quasi-finitely
presented left contramodules are abelian~\cite[Section~2]{Ppc}.

 Due to Propositions~\ref{qf-co-contra} and~\ref{qf-co-pres-duality},
left and right quasi-cocoherent coalgebras and semialgebras over them
are quite suitable for developing an involutive triangulated duality
theory (see the discussion in Sections~\ref{coalgebra-duality}\+-%
\ref{semialgebra-duality} of the introduction).
 However, the notions of quasi-finitely copresented comodules and
quasi-finitely presented contramodules are somewhat complicated.
 To simplify matters a bit, we discuss certain more restrictive
assumptions in this section.

 A coalgebra $\C$ is said to be \emph{right co-Noetherian} if every
quotient comodule of a finitely cogenerated right $\C$\+comodule is
finitely cogenerated, or equivalently, every quotient comodule of
the right $\C$\+comodule $\C$ is finitely
cogenerated~\cite[Theorem~3]{WW}.
 Finitely cogenerated right comodules over a right co-Noetherian
coalgebra form an abelian category.
 We will say that a coalgebra $\C$ is \emph{left contra-Noetherian} if
every subcontramodule of a finitely generated left $\C$\+contramodule is
finitely generated, or equivalently, every subcontramodule of the left
$\C$\+contramodule $\C\spcheck$ is finitely cogenerated.
 Finitely generated left contramodules over a left contra-Noetherian
coalgebra form an abelian category.
 Any left contra-Noetherian coalgebra $\C$ is right
co-Noetherian~\cite[Lemma~2.10(c)]{Pmc}.

 A coalgebra $\C$ is said to be \emph{right quasi-co-Noetherian} if
every quotient comodule of a quasi-finitely cogenerated right
$\C$\+comodule is quasi-finitely cogenerated, or equivalently, every
quotient comodule of the right $\C$\+comodule $\C$ is quasi-finitely
cogenerated~\cite{GTNT}, \cite[Proposition~2.3]{Ppc}.
 All quasi-finitely cogenerated right comodules over a right
quasi-co-Noetherian coalgebra $\C$ are quasi-finitely copresented, and
any such a coalgebra $\C$ is right quasi-cocoherent; so quasi-finitely
cogenerated right comodules over it form
an abelian category~\cite{GTNT}, \cite[Section~2]{Ppc}.

 We will say that a coalgebra $\C$ is \emph{left
quasi-contra-Noetherian} if every subcontramodule of a quasi-finitely
generated left $\C$\+contramodule is quasi-finitely generated, or
equivalently (in view of~\cite[Lemma~2.6(b)]{Ppc}), every
subcontramodule of a quasi-finitely generated projective left
$\C$\+contramodule is quasi-finitely generated.
 All quasi-finitely generated left contramodules over a left
quasi-contra-Noetherian coalgebra $\C$ are quasi-finitely presented, and
any such a coalgebra $\C$ is right quasi-cocoherent; so quasi-finitely
generated left contramodules over it form an abelian category.
 In fact, the following lemma holds.

\begin{lem}
 Any left quasi-contra-Noetherian coalgebra is right quasi-co-Noetherian.
\end{lem}

\begin{proof}
 Let $\C$ be a left quasi-contra-Noetherian coalgebra over~$k$.
 Let $\N$ be a quasi-finitely cogenerated right $\C$\+comodule and $\L$
be a quotient comodule of~$\N$.
 Then $\N\spcheck$ is a left $\C$\+contramodule and $\L\spcheck$ is
a subcontramodule of $\N\spcheck$.
 By Lemma~\ref{qf-co-generated-dualization}, the left $\C$\+contramodule
$\N\spcheck$ is quasi-finitely generated.
 Since $\C$ is left quasi-contra-Noetherian, it follows that the left
$\C$\+contramodule $\L\spcheck$ is quasi-finitely generated.
 Applying Lemma~\ref{qf-co-generated-dualization} again, we can conclude
that the right $\C$\+comodule $\L$ is quasi-finitely cogenerated.
 Thus the coalgebra $\C$ is right quasi-co-Noetherian.
\end{proof}

 A (coassicoative, counital) coalgebra $\C$ over a field~$k$ is said to
be \emph{cosemisimple} if the abelian category of left $\C$\+comodules
is semisimple, or equivalently, the abelian category of right
$\C$\+comodules is semisimple, or equivalently, the abelian category of
left $\C$\+contramodules is semisimple~\cite[Section~4.5]{Pkoszul}.
 A coalgebra is cosemisimple if and only if it is the sum of its
cosimple subcoalgebras (where a coalgebra is called \emph{cosimple} if
it has no nonzero proper subcoalgebras)~\cite[Section~4.3]{Pkoszul}.
 Any coalgebra $\C$ has a unique maximal cosemisimple subcoalgebra
$\C^\ss$, which can be constructed as the (direct) sum of all cosimple
subcoalgebras of~$\C$.
 We refer to~\cite[Chapters~1\+-2 and~9]{Swe}
and~\cite[Section~A.2]{Psemi} for the background material.

\begin{exs} \label{coalgebras-quasi-finiteness-examples}
 (1)~Any cosemisimple coalgebra $\C$ is left and right
quasi-contra-Noetherian.
 Indeed, any subcontramodule of a quasi-finitely generated
$\C$\+contramodule $\P$ is at the same time a quotient
contramodule of $\P$, hence also quasi-finitely generated
(as the class of quasi-finitely generated contramodules over any
coalgebra is closed under quotients). {\hbadness=1700\par}

\smallskip
 (2)~Let $\C$ be a coalgebra whose maximal cosemisimple subcoalgebra
$\C^\ss$ is finite-dimensional (e.~g., $\C$ is conilpotent, which
means that $\C^\ss=k$; see the discussion in
Section~\ref{full-and-faithful-subsecn}).
 Then any quasi-finitely cogenerated $\C$\+comodule is finitely
cogenerated~\cite[Lemma~2.2(e)]{Pmc}, and similarly, any quasi-finitely
generated $\C$\+contramodule is finitely
generated~\cite[Lemma~2.5(e)]{Pmc}.
 So the quasi-finiteness conditions on $\C$\+comod\-ules and
$\C$\+contramodules are equivalent to the similarly named
finiteness conditions.

\smallskip
 (3)~Let $\C$ be a coalgebra whose dual algebra $\C\spcheck$ is left
Noetherian (as an abstract associative ring).
 Then the coalgebra $\C^\ss$ is
finite-dimensional~\cite[Section~2]{Pmc}, so (2)~applies.
 Furthermore, by~\cite[Lemma~2.10(b)]{Pmc}, the coalgebra $\C$ is
\emph{right Artinian} in the sense of~\cite[Section~2]{Pmc}, and
it follows that $\C$ is left contra-Noetherian~\cite[Lemmas~2.6(a)
and~2.10(a)]{Pmc}.
 In view of the discussion in~(2), we can conclude that the coalgebra
$\C$ is also left quasi-contra-Noetherian.
\end{exs}

\subsection{Admissible smooth modules and contraadmissible
contramodules} \label{contra-admissibility-subsecn}
 Let $G$ be locally profinite group, $H\subset G$ be a compact open
subgroup, and $k$~be a field.
 A smooth $G$\+module $\bM$ over~$k$ is said to be \emph{admissible}
if, for any (compact) open subgroup $F\subset G$, the $k$\+vector subspace
of $F$\+invariant elements $\bM^F\subset\bM$ is finite-dimensional.
 Clearly, a smooth $G$\+module is admissible if and only if it is
admissible as a smooth $H$\+module.

 Here is the dual-analogous definition for $G$\+contramodules over~$k$.
 Given a contramodule $\bP$ over a (locally) profinite group $F$, its
vector space of \emph{contrainvariants} $\bP_F$ is defined as
the (unique) maximal quotient $F$\+contramodule of $\bP$ with
the trivial contraaction of~$F$.
 Here the trivial contraaction of $F$ in a vector space $U$ is
the map $U[[F]]\rarrow U$ taking a $U$\+valued measure~$\mu$ on $F$ to
the vector $\mu(F)\in U$ which the measure~$\mu$ assigns to the whole
topological space~$F$.
 This rule defines a $F$\+contramodule structure on an arbitrary
$k$\+vector space~$U$ (cf.\ the definition of a $G$\+contramodule in
Section~\ref{G-contramodules-subsecn}), called the \emph{trivial}
$F$\+contramodule structure.

 The functor of contrainvariants $\bP\longmapsto\bP_F\:F\contra_k
\rarrow k\vect$ is left adjoint to the functor assigning to any
$k$\+vector space $U$ the vector space $U$ endowed with the trivial
$F$\+contramodule structure.
 Explicitly, the vector space $\bP_F$ is the cokernel of the restriction
$\pi_\bP|_{\bP[[F]]_0}\:\bP[[F]]_0\rarrow\bP$, where $\bP[[F]]_0\subset
\bP[[F]]$ is the subspace consisting of all the measures~$\mu$ for which
the measure of the whole space $F$ vanishes, $\mu(F)=0$ (and
$\pi_\bP\:\bP[[F]]\rarrow\bP$ is the contraaction map).
 
 We will say that a $G$\+contramodule $\bP$ over~$k$ is
\emph{contraadmissible} if, for any (compact) open subgroup $F\subset G$,
the $k$\+vector space of $F$\+contrainvariants $\bP_F$ is
finite-dimensional.
 Clearly, it suffices to check this condition for ``small enough''
open subgroups $F\subset G$, as for any open subgroups $F'\subset F''
\subset G$ the vector space $\bP_{F''}$ is a quotient space
of~$\bP_{F'}$.
 So a $G$\+contramodule $\bP$ is contraadmissible if and only if it is
contraadmissible as an $H$\+contramodule.

 We recall that for any profinite group $H$ and any field~$k$,
the inverse element map $H\rarrow H$ induces an isomorphism
$\C\simeq\C^\rop$ between the coalgebra $\C=k(H)$ and its opposite
coalgebra.
 So there is no difference between left and right $\C$\+comodules.
 
\begin{lem} \label{admissible-quasi-finite}
 Let $H$ be a profinite group and $k$~be a field.  Then \par
\textup{(a)} a discrete $H$\+module over~$k$ is admissible if and only
if it is a quasi-finitely cogenerated comodule over the coalgebra
$k(H)$; \par
\textup{(b)} an $H$\+contramodule over~$k$ is contraadmissible if and
only if it is a quasi-finitely generated contramodule over the coalgebra
$k(H)$.
\end{lem}

\begin{proof}
 For any open normal subgroup $F\subset H$, the coalgebra $\E=k(H/F)$ is
a finite-dimensional subcoalgebra in $\C=k(H)$.
 Conversely, for any finite-dimensional subcoalgebra $\E\subset k(H)$
there exists an open normal subgroup $F\subset H$ such that
$\E\subset k(H/F)\subset k(H)$.
 It remains to observe that for the subcoalgebra $\E=k(H/F)$, for any
discrete $H$\+module $\M$ over~$k$, and for any $H$\+contramodule $\P$
over~$k$, one has ${}_\E\M=\M^F$ and ${}^\E\P=\P_F$.
 In other words, the maximal $k(H/F)$\+subcomodule of
the $k(H)$\+comodule $\M$ is the subspace of $F$\+invariants
$\M^F\subset\M$, and the maximal quotient $k(H/F)$\+contramodule
of the $k(H)$\+contramodule $\P$ is the quotient space of
$F$\+contrainvariants $\P\twoheadrightarrow\P_F$.
\end{proof}

 Let $H$ be a profinite group and $k$~be a field.
 We will say that a discrete $H$\+module $\M$ over~$k$ is
\emph{admissibly copresented} if it is quasi-finitely copresented as
a $k(H)$\+comodule.
 In view of Lemma~\ref{admissible-quasi-finite}(a), this means that
$\M$ is the kernel of a morphism between two admissible injective
discrete $H$\+modules over~$k$.
 Similarly, we will say that an $H$\+contramodule $\P$ over~$k$ is
\emph{contraadmissibly presented} if it is quasi-finitely presented as
a $k(H)$\+contramodule.
 In view of Lemma~\ref{admissible-quasi-finite}(b), this means that
$\P$ is the cokernel of a morphism between two contraadmissible
projective $H$\+contramodules over~$k$.

\begin{prop} \label{adm-pres-restriction-to-subgroup}
 Let $H$ be a profinite group, $F\subset H$ be an open subgroup, and
$k$~be a field.  Then\par
\textup{(a)} a discrete $H$\+module over~$k$ is admissibly copresented
if and only if it is admissibly copresented as a discrete $F$\+module;
\par
\textup{(b)} an $H$\+contramodule over~$k$ is contraadmissibly presented
if and only if it is contraadmissibly presented as a $F$\+contramodule.
\end{prop}

\begin{proof}
 Consider two coalgebras $\C=k(H)$ and $\D=k(F)$.
 Then there is a natural surjective coalgebra morphism $\C\rarrow\D$
assigning to a locally constant function $H\rarrow k$ its restriction
to~$F$.
 A key observation is that $\C$ is an injective (in fact, cofree)
left $\D$\+comodule (as well as right $\D$\+comodule).
 Here, in the language of group representations, considering
a $\C$\+comodule as a $\D$\+comodule means restricting the group
action from $H$ to $F$; and similarly with the contramodules.

 Since $\C$ is an injective $\D$\+comodule, any injective $\C$\+comodule
is also injective as a $\D$\+comodule, and any projective
$\C$\+contramodule is also projective as a $\D$\+contramodule.
 Furthermore, in view of Lemma~\ref{admissible-quasi-finite} and
the above discusssion of (contra)admissibility, any quasi-finitely
cogenerated $\C$\+comodule is also a quasi-finitely cogenerated
$\D$\+comodule, and any quasi-finitely generated $\C$\+contramodule is
also a quasi-finitely generated $\D$\+contramodule.

 Now let $\M$ be a quasi-finitely copresented left $\C$\+comodule; so
$\M$ is the kernel of a morphism of quasi-finitely cogenerated injective
$\C$\+comodules $\I\rarrow\J$.
 Then the left $\D$\+comodule $\M$ is the kernel of the morphism of
quasi-finitely cogenerated injective $\D$\+comodules $\I\rarrow\J$.
 Hence $\M$ is a quasi-finitely copresented left $\D$\+comodule.
 This proves the ``only if'' implication in part~(a); and the proof of
the ``only if'' implication in part~(b) is similar.

 To prove the ``if'' implication in part~(a), consider the functor
of coextension of scalars assigning to a left $\D$\+comodule $\N$
the left $\C$\+comodule ${}_\C\N=\C\oc_\D\N$, as discussed in
Section~\ref{extension-of-scalars-subsecn}.
 In the language of smooth representations (in the situation at hand
with $\C=k(H)$ and $\D=k(F)$), the coextension of scalars is
the functor of induced representation from $F$ to $H$, that is
${}_\C\N=\operatorname{ind}_F^H\N$.

 Let $\M$ be a left $\C$\+comodule that is quasi-finitely copresented
as a left $\D$\+comodule.
 Then, by Lemma~\ref{extension-scalars-presentability}(b), the left
$\C$\+comodule ${}_\C\M=\C\oc_\D\M$ is quasi-finitely copresented.
 According to the ``only if'' implication, which we have already proved,
it follows that $\C\oc_\D\M$ is also quasi-finitely copresented as
a $\D$\+comodule.
 Applying Lemma~\ref{extension-scalars-presentability}(b) again, we
see that the left $\C$\+comodule ${}_\C({}_\C\M)=\C\oc_\D\C\oc_\D\M$ is
quasi-finitely copresented, too.
 By Lemma~\ref{cobar-fragment-lemma}(a), we can conclude that $\M$ is
the kernel of a morphism between two quasi-finitely copresented left
$\C$\+comodules.
 It remains to recall that the class of quasi-finitely copresented
comodules over any coalgebra $\C$ is closed under
kernels~\cite[Lemma~2.4(a)]{Ppc}.

 The proof of the ``if'' implication in part~(b) is dual-analogous
and based on Lemmas~\ref{extension-scalars-presentability}(d)
and~\ref{cobar-fragment-lemma}(b).
\end{proof}

 Let $G$ be a locally profinite group, $H\subset G$ be a compact
open subgroup, and~$k$ be a field.
 We will say that a smooth $G$\+module $\bM$ over~$k$ is \emph{admissibly
copresented} if it is admissibly copresented as a discrete $H$\+module
over~$k$.
 According to Proposition~\ref{adm-pres-restriction-to-subgroup}(a), this
property of a smooth $G$\+module $\bM$ does not depend on the choice of
a compact open subgroup $H$ in the locally profinite group~$G$.
 The full subcategory of admissibly copresented smooth $G$\+modules will
be denoted by $G\smooth_{k,\acp}\subset G\smooth_k$.

 Similarly, we will say that a $G$\+contramodule $\bP$ over~$k$ is
\emph{contraadmissibly presented} if it is contraadmissibly presented as
an $H$\+contramodule over~$k$.
 According to Proposition~\ref{adm-pres-restriction-to-subgroup}(b), this
property of a $G$\+contramodule $\bP$ does not depend on the choice of
a compact open subgroup $H\subset G$.
 The full subcategory of contraadmissibly presented $G$\+contramodules
will be denoted by $G\contra_{k,\caap}\subset G\smooth_k$.

\begin{prop} \label{contra-admissible-duality}
 The dualization functor
$$
 \bM\longmapsto\bM\spcheck=\Hom_k(\bM,k)\:
 G\smooth_k\lrarrow G\contra_k
$$
restricts to an anti-equivalence between the full subcategories of
admissibly copresented smooth $G$\+modules and contraadmissibly presented
$G$\+contramodules over~$k$,
$$
 (G\smooth_{k,\acp})^\sop\.\simeq\.G\contra_{k,\caap}.
$$
\end{prop}

\begin{proof}[Sketch of proof]
 Proposition~\ref{qf-co-pres-duality}, applied to the case of
the coalgebra $\C=k(H)$, tells that the dualization functor
$$
 \M\longmapsto\M\spcheck=\Hom_k(\M,k)\:H\discr_k\rarrow H\contra_k
$$
restricts to an anti-equivalence between the full subcategories of
admissibly copresented discrete $H$\+modules and contraadmissibly
presented $H$\+contramodules over~$k$,
$$
 (H\discr_{k,\acp})^\sop\.\simeq\.H\contra_{k,\caap}.
$$
 The idea is to show that, given an admissibly copresented discrete
$H$\+module $\M$ and the dual contraadmissibly presented
$H$\+contramodule $\P=\M\spcheck$ over~$k$, the dualization functor
$\bM\longmapsto\bP=\bM\spcheck$ establishes a bijective correspondence
between the ways to extend the discrete $H$\+action in $\M$ to
a smooth $G$\+action and the ways to extend the $H$\+contraaction
in $\P$ to a $G$\+contraaction.

 Following~\cite[Section~E.1]{Psemi}, given a discrete $H$\+module $\M$
over~$k$, extending the action of $H$ in $\M$ to an action of
the group~$G$ is equivalent to defining a structure of
\emph{right semimodule} over the $\C$\+semialgebra $\bS=k(G)$ on
the right $\C$\+comodule~$\M$.
 This means specifying a \emph{right semiaction} map
$\M\oc_\C\bS\rarrow\M$, which should be a morphism of left
$\C$\+comodules satisfying the semiassociativity and semiunitality
equations.
 Here $\M\oc_\C\bS=\operatorname{ind}_H^G\M$ is the smooth $G$\+module
induced from the smooth $H$\+module~$\M$.
 Similarly, given an $H$\+contramodule $\P$ over~$k$, extending
the contraaction of $H$ in $\P$ to a contraaction of $G$ is equivalent
to defining a structure of \emph{left semicontramodule} over $\bS$ on
the left $\C$\+contramodule~$\P$.
 This means specifying a \emph{left semicontraaction} map
$\P\rarrow\Cohom_\C(\bS,\P)$, which should be a morphism of left
$\C$\+contramodules satisfying the semicontraassociativity and
semicontraunitality equations~\cite[Example~2.6]{Prev}.
 See also the discussions in
Sections~\ref{loc-profinite-group-semialgebra-subsecn}
and~\ref{smooth-contra-subsecn} above.

 Now in the situation at hand, we have $\P=\M\spcheck$ and
$\Cohom_\C(\bS,\P)=(\M\oc_\C\nobreak\bS)\spcheck$
(see~\eqref{cotensor-cohom}).
 In order to prove the desired bijection between the $\bS$\+semimodule
structures on $\M$ and the $\bS$\+semicontramodule structures on $\P$,
we use the result of~\cite[Proposition~2.8(b)]{Ppc}, claiming, in
particular, that for any coalgebra $\C$, and right $\C$\+comodule $\N$,
and any quasi-finitely cogenerated right $\C$\+comodule $\M$,
the dualization functor $\N\longmapsto\N\spcheck$ induces an isomorphism
between the Hom spaces in the categories of right $\C$\+comodules and
left $\C$\+contramodules
$$
 \Hom_{\C^\rop}(\N,\M)\simeq\Hom^\C(\M\spcheck,\N\spcheck).
$$
 Notice that the right $\C$\+comodule $\N=\M\oc_\C\bS$ does not need to
be quasi-finitely cogenerated, of course; but the right $\C$\+comodule
$\M$ is (quasi-finitely copresented, hence) quasi-finitely cogenerated
by assumption, which is sufficient
to make~\cite[Proposition~2.8(b)]{Ppc} applicable.

 It is straightforward to check that a right $\C$\+comodule map
$\M\oc_\C\bS\rarrow\M$ satisfies the above-mentioned equations from
the definition of a semimodule and only if the corresponding left
$\C$\+contramodule map $\P\rarrow\Cohom_\C(\bS,\P)$ satisfies
the above-mentioned (dual-analogous) equations from the definition
of a semicontramodule.
 This proves that our functor $(G\smooth_{k,\acp})^\sop\rarrow
G\contra_{k,\caap}$ is surjective on the isomorphism classes of objects.
 Checking that it is bijective on morphisms is another
application of~\cite[Proposition~2.8(b)]{Ppc}.
\end{proof}

 It follows from Proposition~\ref{contra-admissible-duality} that
every contraadmissibly presented $G$\+contra\-module over~$k$ carries
a natural, functorially defined pseudo-compact $G$\+module structure
(i.~e., a pseudo-compact topology making it the pseudo-compact dual
vector space to a smooth $G$\+module).

\subsection{Locally quasi-contra-Noetherian and locally
quasi-cocoherent groups} \label{loc-qf-groups-subsecn}
 In the previous section we have shown that various
quasi-finiteness/admissibility conditions on smooth modules and
contramodules do not depend on the choice of a compact open subgroup
$H$ in a given locally profinite group~$G$.
 In this section we prove similar results for local quasi-finiteness
conditions on the group $G$ itself (cf.\ the discussion in
Section~\ref{admissibly-co-presented-subsecn} of the introduction).

\begin{lem} \label{extension-of-scalars-reflects}
 Let $H$ be a profinite group, $F\subset H$ be an open subgroup, and
$k$~be a field.
 Consider the related surjective morphism\/ $\C=k(H)\rarrow k(F)=\D$ of
coalgebras over~$k$.  Then \par
\textup{(a)} a\/ $\D$\+comodule\/ $\N$ is quasi-finitely
cogenerated if and only if the\/ $\C$\+comodule ${}_\C\N$ is
quasi-finitely cogenerated; \par
\textup{(b)} a\/ $\D$\+comodule\/ $\N$ is quasi-finitely
copresented if and only if the\/ $\D$\+comodule ${}_\C\N$ is
quasi-finitely copresented; \par
\textup{(c)} a\/ $\D$\+contramodule\/ $\Q$ is quasi-finitely
generated if and only if the\/ $\C$\+contramodule\/ ${}^\C\Q$ is
quasi-finitely generated; \par
\textup{(d)} a\/ $\D$\+contramodule\/ $\Q$ is quasi-finitely
presented if and only if the\/ $\C$\+contramodule\/ ${}^\C\Q$ is
quasi-finitely presented.
\end{lem}

\begin{proof}
 In all the four parts~(a\+d), the ``only if'' implication holds for any
morphism of coalgebras $\C\rarrow\D$ by
Lemma~\ref{extension-scalars-presentability}.
 Let us prove the ``if''.

 The key observation is that the $\D$\+$\D$\+bicomodule $\C$ contains
the $\D$\+$\D$\+bicomodule $\D$ as a direct summand (or in other
words, the smooth $F\times F$\+module $k(H)$ contains the smooth
$F\times F$\+module $k(F)$ as a direct summand).
 Consequently, for any left $\D$\+comodule $\N$, applying the functor
of coextension of scalars $\N\longmapsto{}_\C\N=\C\oc_\D\nobreak\N=
\operatorname{ind}_H^G\N$ produces a $\C$\+comodule whose underlying
$\D$\+comodule contains $\N$ as a direct summand.
 Similarly, for any left $\D$\+contramodule $\Q$, applying the functor
of contraextension of scalars $\Q\longmapsto{}^\C\Q=\Cohom_\D(\C,\Q)$
produces a $\C$\+contramodule whose underlying $\D$\+contramodule
contains $\Q$ as a direct summand.

 Part~(a): assume that the $\C$\+comodule ${}_\C\N$ is quasi-finitely
cogenerated.
 As explained in the proof of
Proposition~\ref{adm-pres-restriction-to-subgroup}, it follows that
the $\D$\+comodule ${}_\C\N$ is quasi-finitely cogenerated.
 Since the class of quasi-finitely cogenerated comodules over
any coalgebra is closed under direct summands, the assertion follows.
 The argument for part~(b) is similar, using
Proposition~\ref{adm-pres-restriction-to-subgroup}(a);
and parts~(c\+d) are also similar.
\end{proof}

 Due to the isomorphism of coalgebras $\C=k(H)\simeq\C^\rop$, there is
no difference between the ``left'' and ``right'' versions of
the properties appearing in the next corollary.

\begin{cor} \label{quasi-finiteness-group-locality}
 Let $H$ be a profinite group, $F\subset H$ be an open subgroup, and
$k$~be a field.  Then \par
\textup{(a)} the ring $k[[H]]=k(H)\spcheck$ is Noetherian if and only if
the ring $k[[F]]=k(F)\spcheck$ is Noetherian; \par
\textup{(b)} the coalgebra $k(H)$ is quasi-contra-Noetherian if and only
if the coalgebra $k(F)$ is quasi-contra-Noetherian; \par
\textup{(c)} the coalgebra $k(H)$ is quasi-cocoherent if and only if
the coalgebra $k(F)$ is quasi-cocoherent.
\end{cor}

\begin{proof}
 Part~(a) is essentially well-known; cf.~\cite[Exercise~7.6]{DSMS}.
 We will prove part~(b), parts~(a) and~(c) being similar.
 The argument is a straightforward consequence of the results of
Section~\ref{contra-admissibility-subsecn} and this section.
 As above, we set $\C=k(H)$ and $\D=k(F)$.

 ``If'': let $\P$ be a quasi-finitely generated $\C$\+contramodule
and $\Q\subset\P$ be a subcontramodule.
 In other words, this means that $\P$ is a contraadmissible
$H$\+contramodule over~$k$ (see Lemma~\ref{admissible-quasi-finite}(b)).
 Hence $\P$ is also a contraadmissible $F$\+contramodule over~$k$,
i.~e., a quasi-finitely generated $\D$\+contramodule.
 By assumption, $\D$ is quasi-contra-Noetherian, so $\Q$ is
a quasi-finitely generated $\D$\+contramodule, too.
 In other words, $\Q$ is a contraadmissible $F$\+contramodule over~$k$,
hence a contraadmissible $H$\+contramodule.
 Thus $\Q$ is a quasi-finitely generated $\C$\+contramodule.

 ``Only if'': let $\P$ be a quasi-finitely generated
$\D$\+contramodule and $\Q\subset\P$ be a subcontramodule.
 Since $\C$ is an injective left $\D$\+comodule, the functor of
contraextension of scalars $\P\longmapsto{}^\C\P=\Cohom_\D(\C,\P)$
is exact.
 So ${}^\C\P$ is a $\C$\+contramodule and ${}^\C\Q\subset{}^\C\P$
is a $\C$\+subcontramodule.
 By Lemma~\ref{extension-of-scalars-reflects}(c),
the $\C$\+contramodule ${}^\C\P$ is quasi-finitely generated.
 By assumption, $\C$ is quasi-contra-Noetherian, so
the $\C$\+contramodule ${}^\C\Q$ is quasi-finitely generated, too.
 Again by Lemma~\ref{extension-of-scalars-reflects}(c),
the $\D$\+contramodule $\Q$ is quasi-finitely generated.

 In part~(c), one has to use
Proposition~\ref{adm-pres-restriction-to-subgroup}(a) and
Lemma~\ref{extension-of-scalars-reflects}(b).

 In part~(a), the implication ``if'' is a particular case
of~\cite[Corollary~1.5]{GW}.
 To prove the ``only if'', use the fact that any left
$k[[F]]$\+module $M$ is a direct summand of the left $k[[F]]$\+module
$k[[H]]\ot_{k[[F]]}M$, since the $k[[F]]$-$k[[F]]$-bimodule $k[[F]]$
is a direct summand of the $k[[F]]$-$k[[F]]$-bimodule~$k[[H]]$.
\end{proof}

 Let $G$ be a locally profinite group, $H\subset G$ be a compact open
subgroup, and $k$~be a field.
 We will say that $G$ is \emph{locally Noetherian over~$k$} if
the ring $k[[H]]=k(H)\spcheck$ is Noetherian.
 Similarly, we will say that $G$ is \emph{locally
quasi-contra-Noetherian over~$k$} if the coalgebra $k(H)$ is
quasi-contra-Noetherian, and we will say that $G$ is \emph{locally
quasi-cocoherent over~$k$} if the coalgebra $k(H)$ is
quasi-cocoherent.
 In view of Corollary~\ref{quasi-finiteness-group-locality}, these
properties of a locally profinite group $G$ do not depend on
the choice of a compact open subgroup $H\subset G$.
 When speaking of such properties in application to profinite groups
$H$, we will drop the adverb ``locally''.

 According to Example~\ref{coalgebras-quasi-finiteness-examples}\,(3),
any locally Noetherian (locally profinite) group is locally
quasi-contra-Noetherian.
 Following the discussion in
Section~\ref{quasi-contra-Noetherian-subsecn}, any locally
quasi-contra-Noetherian group is locally quasi-cocoherent
(over the same field~$k$).
 Moreover, for a locally profinite group $G$ that is
locally quasi-contra-Noetherian over a field~$k$, any admissible
smooth $G$\+module over~$k$ is admissibly copresented and
any contraadmissible $G$\+contramodule over~$k$ is contraadmissibly
presented.

\begin{exs} \label{loc-quasi-contra-Noetherian-groups}
 (1)~Following Example~\ref{coalgebras-quasi-finiteness-examples}\,(1),
any locally profinite group $G$ which has a compact open subgroup $H$
of the proorder not divisible by the characteristic of~$k$ is locally
quasi-contra-Noetherian over~$k$.
 In particular, over a field~$k$ of characteristic~$0$ any locally
profinite group is locally quasi-contra-Noetherian.

\smallskip
 (2)~According to~\cite[Corollary~7.25 and Theorem~8.32]{DSMS}, over
a field $k$~of characteristic~$p$, any $p$\+adic Lie group $G$ is
locally Noetherian.
\end{exs}

\Section{Involutive Triangulated Duality} \label{involutive-secn}

\subsection{Quasi-cocoherent coalgebras} \label{q-cocoh-coalgebras}
 Let $\C$ be a coalgebra over a field~$k$.
 Assume that the coalgebra $\C$ is left quasi-cocoherent and right
quasi-cocoherent (see Sections~\ref{quasi-finite-subsecn}\+-
\ref{qf-co-presented-subsecn} and~\ref{quasi-contra-Noetherian-subsecn}
for the relevant definitions).

 Then the category of quasi-finitely copresented left $\C$\+comodules
$\C\comodl_\qfc$ is an abelian subcategory of the abelian category
$\C\comodl$ with an exact identity embedding functor $\C\comodl_\qfc
\rarrow\C\comodl$.
 The abelian category $\C\comodl_\qfc$ has enough injective objects,
which form the full subcategory
$$
\C\comodl_\qfc\cap\C\comodl^\inj=\C\comodl_\qfc^\inj\subset\C\comodl;
$$
so the injective objects of $\C\comodl_\qfc$ are also injective
in $\C\comodl$.
 The category of quasi-finitely copresented right $\C$\+comodules
$\comodrqfc\C\subset\comodr\C$ has similar properties.

 Furthermore, the category of quasi-finitely presented left
$\C$\+contramodules $\C\contra_\qfp$ is an abelian subcategory of
the abelian category $\C\contra$ with an exact identity embedding
functor $\C\contra_\qfp\rarrow\C\contra$.
 The abelian category $\C\contra_\qfp$ has enough projective objects,
which form the full subcategory {\hbadness=2100
$$
 \C\contra_\qfp\cap\C\contra^\proj=\C\contra_\qfp^\proj\subset\C\contra;
$$
so} the projective objects of $\C\contra_\qfp$ are also projective
in $\C\contra$.

 Recall from the discussion in Section~\ref{derived-equivalence-subsecn}
that, for any coalgebra $\C$, the three abelian categories $\C\comodl$,
\,$\comodr\C$, and $\C\contra$ have the same homological dimension
(called the homological dimension of~$\C$).
 For any coalgebra~$\C$, the equivalence of additive categories
$\C\comodl^\inj\simeq\C\contra^\proj$
\,(\ref{phi-psi-C-adjunction}\+-\ref{underived-co-contra}) induces
an equivalence of the homotopy categories $\Hot(\C\comodl^\inj)
\simeq\Hot(\C\contra^\proj)$ of the additive categories of injective
left $\C$\+comodules and projective left $\C$\+contramodules.
 For a coalgebra $\C$ of finite homological dimension, this leads to
an equivalence of the derived categories
\begin{equation} \label{finite-homol-dim-co-contra}
 \boR\Psi_\C\:\sD(\C\comodl)=\Hot(\C\comodl^\inj)\simeq
 \Hot(\C\contra^\proj)=\sD(\C\contra):\!\boL\Phi_\C
\end{equation}
(cf.\ the discussion in Section~\ref{co-contra-subsecn}).

 For any left and right quasi-cocoherent coalgebra $\C$, the bounded
derived category of quasi-finitely copresented left $\C$\+comodules
is a full subcategory of the derived category of left $\C$\+comodules,
and the bounded derived category of quasi-finitely presented left
$\C$\+contramodules is a full subcategory of the derived category
of left $\C$\+contramodules,
$$
 \sD^\b(\C\comodl_\qfc)\.\subset\.\sD(\C\comodl)
 \quad\text{and}\quad
 \sD^\b(\C\contra_\qfp)\.\subset\.\sD(\C\contra).
$$
 Now let $\C$ be a left and right quasi-cocoherent coalgebra of finite
homological dimension.
 Then the equivalence of additive categories $\C\comodl^\inj_\qfc
\simeq\C\contra^\proj_\qfp$ from Proposition~\ref{qf-co-contra} shows
that the triangulated equivalence~\eqref{finite-homol-dim-co-contra}
restricts to an equivalence between the bounded derived categories of
quasi-finitely (co)presented comodules and contramodules,
\begin{multline} \label{qf-bounded-derived-co-contra}
 \boR\Psi_\C\:\sD^\b(\C\comodl_\qfc)=\Hot^\b(\C\comodl^\inj_\qfc)
 \\ \simeq
 \Hot^\b(\C\contra^\proj_\qfp)=\sD^\b(\C\contra_\qfp):\!\boL\Phi_\C.
\end{multline}

 Furthermore, following Proposition~\ref{qf-co-pres-duality},
the dualization functor $\N\longmapsto\N\spcheck$ provides
an anti-equivalence of abelian categories $(\comodrqfc\C)^\sop\simeq
\C\contra_\qfp$, inducing an anti-equivalence of the bounded derived
categories
\begin{equation} \label{C-derived-dualization-equivalence}
 \sD^\b(\comodrqfc\C)^\sop\.\simeq\.\sD^\b(\C\contra_\qfp).
\end{equation}
 Composing the triangulated anti-equivalence $\N\longmapsto\N\spcheck$
\eqref{C-derived-dualization-equivalence} with the triangulated
equivalence $\boL\Phi_\C$ \eqref{qf-bounded-derived-co-contra}, we
obtain an anti-equivalence between the bounded derived categories of
quasi-finitely copresented left and right $\C$\+comodules,
\begin{equation} \label{Delta-C-anti-equivalence}
 \Delta_\C\:\sD^\b(\comodrqfc\C)^\sop\.\simeq\.\sD^\b(\C\comodl_\qfc).
\end{equation}

\begin{prop} \label{coalgebra-two-Deltas-inverse}
 For any left and right quasi-cocoherent coalgebra\/ $\C$ of finite
homological dimension, the triangulated anti-equivalences\/
$\Delta_\C\:\sD^\b(\comodrqfc\C)^\sop\rarrow\sD^\b(\C\comodl_\qfc)$
and\/ $\Delta_{\C^\rop}\:\sD^\b(\C\comodl_\qfc)^\sop\rarrow
\sD^\b(\comodrqfc\C)$ are inverse to each other.
\end{prop}

\begin{proof}[Sketch of proof]
 More generally, for any coalgebra $\C$ of finite homological dimension
and any $k$\+vector space one can compose the contravariant
triangulated functor
$$
 \Hom_k({-},V)\:\sD(\comodr\C)^\sop\lrarrow\sD(\C\contra)
$$
with the triangulated equivalence $\boL\Phi_\C$
\eqref{finite-homol-dim-co-contra}, obtaining a contravariant
triangulated functor
$$
 \Delta_\C^V\:\sD(\comodr\C)^\sop\lrarrow\sD(\C\comodl).
$$

 Arguing similarly to Section~\ref{duality-adjunction-subsecn}
(cf.\ Section~\ref{duality-adjunction-introd}), one shows that
the contravariant functors $\Delta_\C^V$ and
$$
 \Delta_{\C^\rop}^V\:\sD(\C\comodl)^\sop\lrarrow\sD(\comodr\C)
$$
are \emph{right adjoint to each other}.
 This means that for every complex of left $\C$\+comodules $\M^\bu$
and every complex of right $\C$\+comodules $\N^\bu$ there is a natural
isomorphism of Hom spaces in the derived categories
\begin{equation} \label{coalgebra-duality-adjunction-iso}
 \Hom_{\sD(\C\comodl)}(\M^\bu,\Delta_\C^V\N^\bu)\simeq
 \Hom_{\sD(\comodr\C)}(\N^\bu,\Delta_{\C^\rop}^V\M^\bu).
\end{equation}
 Similarly to the proof of
Proposition~\ref{derived-self-adjunction-prop}, one can compute
both the left- and the right-hand side
of~\eqref{coalgebra-duality-adjunction-iso} as the degree-zero
cohomology vector space of what is denoted in~\cite{Psemi} by
$\Hom_k(\operatorname{Cotor}^\C(\M^\bu,\N^\bu),V)$
(see~\cite[Section~0.2.2]{Psemi}).
 One can also construct the adjunction morphisms $\M^\bu\rarrow
\Delta_\C^V\Delta_{\C^\rop}^V(\M^\bu)$ in $\sD(\C\comodl)$ and
$\N^\bu\rarrow\Delta_{\C^\rop}^V\Delta_\C^V(\N^\bu)$ in
$\sD(\comodr\C)$ in a way similar to the construction
in Section~\ref{duality-adjunction-subsecn}.

 Now specializing to the vector space $V=k$ and restricting to the full
subcategories $\sD^\b(\C\comodl_\qfc)\subset\sD(\C\comodl)$ and
$\sD^\b(\comodrqfc\C)\subset\sD(\comodr\C)$, we have a pair of
right adjoint contravariant functors
$\Delta_\C\:\sD^\b(\comodrqfc\C)^\sop\rarrow\sD^\b(\C\comodl_\qfc)$
and $\Delta_{\C^\rop}\:\sD^\b(\C\comodl_\qfc)^\sop\rarrow
\sD^\b(\comodrqfc\C)$.
 We already know that the latter two functors are triangulated
anti-equivalences~\eqref{Delta-C-anti-equivalence}.
 Any two adjoint equivalences are mutually inverse.
\end{proof}

\subsection{Profinite groups}
 Let $k$~be a field and $H$~be a profinite group of finite
$k$\+cohomological dimension.
 Assume further that $H$ is quasi-cocoherent over~$k$, i.~e.,
the coalgebra $\C=k(H)$ is quasi-cocoherent (see
Section~\ref{loc-qf-groups-subsecn}).

 Recall that the inverse element map $H\rarrow H$ induces an isomorphism
$\C\simeq\C^\rop$, so the categories of left and right $\C$\+comodules
are equivalent to each other.
 They are also equivalent to the category $H\discr_k$ of discrete
$H$\+modules over~$k$, while the category of (left or right)
$\C$\+contramodules is equivalent to the category $H\contra_k$ of
$H$\+contramodules over~$k$.

 We also recall the notation $H\discr_{k,\acp}\subset H\discr_k$ and
$H\contra_{k,\caap}\subset H\contra_k$ for the full subcategories of
admissibly copresented discrete $H$\+modules and contraadmissibly
presented $H$\+contramodules over~$k$ (see 
Section~\ref{contra-admissibility-subsecn} for a more general
discussion for a locally profinite group~$G$).
 Under our quasi-cocoherence assumption on the group $H$, these two
categories are abelian.
 When the profinite group $H$ is quasi-contra-Noetherian over~$k$, these
full subcategories coincide with the full subcategories of admissible
discrete $H$\+modules and contraadmissible $H$\+contramodules over~$k$,
respectively (see Section~\ref{quasi-contra-Noetherian-subsecn}).

 Specializing the discussion in Section~\ref{q-cocoh-coalgebras} to
the case of the coalgebra $\C=k(H)$, we see that the derived
equivalence
\begin{equation} \label{H-finite-co-homol-dim-co-contra}
 \boR\Psi_H\:\sD(H\discr_k)\.\simeq\.\sD(H\contra_k):\!\boL\Phi_H
\end{equation}
restricts to an equivalence between the bounded derived categories
of admissibly copresented discrete modules and contraadmissibly
presented contramodules
$\sD^\b(H\discr_{k,\acp})\subset\sD(H\discr_k)$ and
$\sD^\b(H\contra_{k,\caap})\subset\sD^\b(H\contra_k)$,
\begin{equation} \label{H-admissibly-bounded-derived-co-contra}
 \boR\Psi_H\:\sD^\b(H\discr_{k,\acp})\.\simeq\.
 \sD^\b(H\contra_{k,\caap}):\!\boL\Phi_H.
\end{equation}
 Furthermore, the dualization functor $\M\longmapsto\M\spcheck$ provides
an anti-equivalence of abelian categories $(H\discr_{k,\acp})^\sop\simeq
H\contra_{k,\caap}$ (cf.\ Proposition~\ref{contra-admissible-duality}),
inducing an anti-equivalence of the bounded derived categories
\begin{equation} \label{H-derived-dualization-equivalence}
 \sD^\b(H\discr_{k,\acp})^\sop\.\simeq\.
 \sD^\b(H\contra_{k,\caap}).
\end{equation}

 As in Section~\ref{q-cocoh-coalgebras}, we compose the triangulated
anti-equivalence $\M\longmapsto\M\spcheck$
\eqref{H-derived-dualization-equivalence} with the triangulated
equivalence $\boL\Phi_H$ \eqref{H-admissibly-bounded-derived-co-contra},
producing an auto-anti-equivalence of the bounded derived category of
admissibly copresented discrete $H$\+modules
\begin{equation} \label{Delta-H}
 \Delta_H\:\sD^\b(H\discr_{k,\acp})^\sop\simeq
 \sD^\b(H\discr_{k,\acp}).
\end{equation}

\begin{thm} \label{H-involutive-duality-thm}
 For any field~$k$ and a profinite group $H$ of finite
$k$\+cohomological dimension that is quasi-cocoherent over~$k$,
the functor\/ $\Delta_H$ is an involutive triangulated
auto-anti-equivalence of the bounded derived category of admissibly
copresented discrete $H$\+modules over~$k$.
\end{thm}

\begin{proof}
 The equivalences of categories $k(H)\comodl_\qfc\simeq H\discr_{k,\acp}
\simeq\comodrqfc k(H)$ identify the functor $\Delta_H$ with both
the functors $\Delta_\C$ and $\Delta_{\C^\rop}$ for $\C=k(H)$, so
the assertion follows from
Proposition~\ref{coalgebra-two-Deltas-inverse}.
 Alternatively, one can put $V=k$ and observe that the functor
$\Delta_H$ is a restriction of the functor $\Delta_H^k\:
\sD(H\discr_k)^\sop\rarrow\sD(H\discr_k)$ \,\eqref{Delta-G-V},
which is right self-adjoint by
Proposition~\ref{derived-self-adjunction-prop}.
 So the functor $\Delta_H$ is a right self-adjoint
auto-anti-equivalence.
 Hence the adjunction morphism $\M^\bu\rarrow\Delta_H\Delta_H(\M^\bu)$
is an isomorphism in the derived category for any complex
$\M^\bu\in\sD^\b(H\discr_{k,\acp})$, and the functor $\Delta_H$ is
involutive. \hfuzz=1.3pt
\end{proof}

\subsection{Locally profinite groups}
 Let $k$~be a field and $G$ be a locally profinite group locally of
finite $k$\+cohomological dimension (see
Section~\ref{derived-equivalence-subsecn}).
 Assume further that $G$ is locally quasi-cocoherent over~$k$
(see Section~\ref{loc-qf-groups-subsecn}).

 We consider the full subcategory $\sD^\b_\acp(G\smooth_k)\subset
\sD(G\smooth_k)$ in the derived category of smooth $G$\+modules over~$k$
consisting of all the bounded complexes of smooth $G$\+modules
\emph{with admissibly copresented cohomology modules}.
 Similarly, we also consider the full subcategory
$\sD^\b_\caap(G\contra_k)\subset\sD(G\contra_k)$ in the derived category
of $G$\+contramodules over~$k$ consisting of all the bounded complexes
of $G$\+contramodules \emph{with contraadmissibly presented
cohomology contramodules}.

 Let $H\subset G$ be a compact open subgroup of finite
$k$\+cohomological dimension.
 It follows from the construction of the triangulated equivalence
in Theorem~\ref{derived-equivalence} and the commutative 
diagrams~\eqref{G-G-prime-Phi-Psi} that the derived
equivalences~\eqref{loc-finite-cohomol-dim-derived-equivalence} for
the groups $G$ and $H$ form a commutative diagram with the forgetful
functors; see diagram~\eqref{fhd-G-H-semico-semicontra-diagram}
in Section~\ref{fhd-subsecn}.
 According to~(\ref{H-finite-co-homol-dim-co-contra}\+-%
\ref{H-admissibly-bounded-derived-co-contra}), the triangulated
equivalence in the lower line
of~\eqref{fhd-G-H-semico-semicontra-diagram} restricts to
a triangulated
equivalence~\eqref{H-admissibly-bounded-derived-co-contra}.
 Passing to the full preimages
of~\eqref{H-admissibly-bounded-derived-co-contra} with respect to
the vertical forgetful functors
in~\eqref{fhd-G-H-semico-semicontra-diagram}, we obtain a commutative
diagram of triangulated equivalences and forgetful functors
\begin{equation} \label{G-H-admissible-derived-co-contra}
\begin{diagram}
\node{\boR\Psi_G\:\sD^\b_\acp(G\smooth_k)}\arrow{s}\arrow[2]{e,=}
\node[2]{\sD^\b_\caap(G\contra_k):\!\boL\Phi_G}\arrow{s} \\
\node{\boR\Psi_H\:\sD(H\discr_{k,\acp})}\arrow[2]{e,=}
\node[2]{\sD(H\contra_{k,\caap}):\!\boL\Phi_H}
\end{diagram}
\end{equation}

 Furthermore, the dualization functor $\bM\longmapsto\bM\spcheck\:
(G\smooth_k)^\sop\rarrow G\contra_k$ takes $G\smooth_{k,\acp}$ into
$G\contra_{k,\caap}$, and therefore, induces a contravariant
triangulated functor
\begin{equation} \label{G-derived-dualization-functor}
 \sD^\b_\acp(G\smooth_k)^\sop\lrarrow\sD^\b_\caap(G\contra_k).
\end{equation}
 The functor~\eqref{G-derived-dualization-functor} forms
a commutative diagram with the triangulated
anti-equiva\-lence~\eqref{H-derived-dualization-equivalence}
and the forgetful functors,
\begin{equation} \label{G-H-derived-dualization-diagram}
\begin{diagram}
\node{\sD^\b_\acp(G\smooth_k)^\sop} \arrow{e} \arrow{s}
\node{\sD^\b_\caap(G\contra_k)} \arrow{s} \\
\node{\sD^\b(H\discr_{k,\acp})^\sop} \arrow{e,=}
\node{\sD^\b(H\contra_{k,\caap})}
\end{diagram}
\end{equation}

 Composing the horizontal functors
in~\eqref{G-H-derived-dualization-diagram} with the triangulated
equivalences $\boL\Phi_G$ and $\boL\Phi_H$
\eqref{G-H-admissible-derived-co-contra}, we obtain a commutative
diagram of triangulated functors
\begin{equation} \label{Delta-G-H}
\begin{diagram}
\node{\Delta_G\:\sD^\b_\acp(G\smooth_k)^\sop} \arrow{e} \arrow{s}
\node{\sD^\b_\acp(G\smooth_k)} \arrow{s} \\
\node{\Delta_H\:\sD^\b(H\discr_{k,\acp})^\sop} \arrow{e,=}
\node{\sD^\b(H\discr_{k,\acp})}
\end{diagram}
\end{equation}
with an equivalence in the lower horizontal line
(cf.\ diagram~\eqref{loc-profinite-duality-diagram} in
Section~\ref{loc-profinite-duality-subsecn} of the introduction).

\begin{thm} \label{G-involutive-duality-theorem}
 For any field~$k$ and a locally profinite group $G$ locally of finite
$k$\+cohomological dimension that is locally quasi-cocoherent over~$k$,
the functor~\eqref{G-derived-dualization-functor} is a triangulated
anti-equivalence.
 The functor\/ $\Delta_G$ is an involutive triangulated
auto-anti-equivalence of the bounded derived category of smooth
$G$\+modules over~$k$ with admissibly copresented cohomology modules\/
$\sD^\b_\acp(G\smooth_k)$.
\end{thm}

\begin{proof}
 As in the proof of Theorem~\ref{H-involutive-duality-thm}, we put
$V=k$ and observe that the functor $\Delta_G$ is a restriction of
the functor $\Delta_G^k\:\sD(G\smooth_k)^\sop\rarrow
\sD(G\smooth_k)$ \,\eqref{Delta-G-V}, which is right self-adjoint by
Proposition~\ref{derived-self-adjunction-prop}.
 Hence the functor $\Delta_G$ is right self-adjoint, too.
 In order to show that $\Delta_G$ is an involutive
auto-anti-equivalence, it remains to check that the adjunction morphism
$\bM^\bu\rarrow\Delta_G\Delta_G(\bM^\bu)$ is an isomorphism in
the category $\sD^\b_\acp(G\smooth_k)$ for any complex
$\bM^\bu\in\sD^\b_\acp(G\smooth_k)$.

 Indeed, one can see from the construction of the adjunction morphism
in Section~\ref{duality-adjunction-subsecn} that for any complex of
smooth $G$\+modules $\bN^\bu$ over~$k$ the forgetful functor
$\sD(G\smooth_k)\rarrow\sD(H\discr_k)$ takes the adjunction morphism
$\bN^\bu\rarrow\Delta_G^k\Delta_G^k(\bN^\bu)$ to the adjunction
morphism $\bN^\bu\rarrow\Delta_H^k\Delta_H^k(\bN^\bu)$.
 Now for a complex $\bM^\bu\in\sD^\b_\acp(G\smooth_k)$, the image of
$\bM^\bu$ under the forgetful functor is (isomorphic to) a complex from
$\sD^\b(H\discr_{k,\acp})$.
 By Theorem~\ref{H-involutive-duality-thm}, the adjunction morphism
$\bM^\bu\rarrow\Delta_H\Delta_H(\bM^\bu)$ is an isomorphism in
$\sD^\b(H\discr_{k,\acp})$.
 Since the forgetful functor $\sD(G\smooth_k)\rarrow\sD(H\discr_k)$ is
conservative, it follows that the adjunction morphism
$\bM^\bu\rarrow\Delta_G\Delta_G(\bM^\bu)$ is an isomorphism in
$\sD^\b_\acp(G\smooth_k)$.

 We have shown that the functor $\Delta_G$ is an (involutive)
auto-anti-equivalence.
 Since the functor $\boL\Phi_G$
\eqref{G-H-admissible-derived-co-contra} is an equivalence, it follows
that the functor~\eqref{G-derived-dualization-functor} is
an anti-equivalence.
\end{proof}

 In conclusion, we recall that when the group $G$ is locally
quasi-contra-Noetherian over the field~$k$, all admissible smooth
$G$\+modules over~$k$ are admissibly copresented, so
$\sD^\b_\acp(G\smooth_k)$ is the bounded derived category of smooth
$G$\+modules over~$k$ with admissible cohomology modules.
 In particular, following
Examples~\ref{loc-quasi-contra-Noetherian-groups}, a p-adic Lie
group $G$ is locally quasi-contra-Noetherian over any field~$k$.
 A $p$\+adic Lie group is also of locally of finite $k$\+cohomological
dimension for any field~$k$, so
Thereom~\ref{G-involutive-duality-theorem} applies.

 Over a field~$k$ of characteristic~$0$, any locally profinite group
is locally of cohomological dimension~$0$ and locally
quasi-contra-Noetherian, so Theorem~\ref{G-involutive-duality-theorem}
is applicable in this case, too.

\begin{rem}
 Notice the difference between Theorems~\ref{H-involutive-duality-thm}
and~\ref{G-involutive-duality-theorem}: while the former provides
an involutive duality on the bounded derived category of admissibly
copresented discrete $H$\+modules, the latter establishes
an involutive duality on the bounded derived category of complexes of
\emph{arbitrary} smooth $G$\+modules \emph{with admissibly
copresented cohomology modules}.

 One can also consider the bounded derived category of the abelian
category of admissibly copresented smooth $G$\+modules,
$\sD^\b(G\smooth_{k,\acp})$.
 There is a natural triangulated functor $\sD^\b(G\smooth_{k,\acp})
\rarrow\sD^\b_\acp(G\smooth_k)$, but generally speaking this functor
is far from being an equivalence.
 It is \emph{neither} fully faithful \emph{nor} surjective on objects.
 (It suffices to consider the case of a \emph{discrete} group $G$,
which is, of course, locally Noetherian and locally of cohomological
dimension~$0$ over any field~$k$.
 Now the admissible $G$\+modules are simply
the \emph{finite-dimensional} representations of the group $G$ over~$k$; 
and, generally speaking, the bounded derived category of
finite-dimensional representations of an infinite discrete group is very
different from the bounded derived category of complexes of
infinite-dimensional representations with finite-dimensional cohomology
modules.)

 Our constructions do \emph{not} seem to allow to obtain an involutive
duality on the derived category of admissible or admissibly copresented
$G$\+modules $\sD^\b(G\smooth_{k,\acp})$.
 The problem is that the construction of the derived functor $\boL\Phi_G$
requres replacing a given complex of $G$\+contramodules over~$k$ with
a complex of $G$\+contramodules that are projective as contramodules
over a certain compact open subgroup $H\subset G$ (see
the proof of Theorem~\ref{derived-equivalence}).
 The known construction of such resolutions~\cite[Section~3.3.3]{Psemi}
leads outside of the class of contraadmissible $G$\+contramodules.
\end{rem}

\begin{rem}
 This paper is inspired by Kohlhaase's paper~\cite{Koh}, and in
the case of a $p$\+adic Lie group $G$ in the natural characteristic
$\operatorname{char}k=p$ the theory developed in the present paper
is closely related to the theory developed in~\cite{Koh}.
 Without going into a detailed comparison, let us mention one obvious
difference between our approaches: no derived categories are
mentioned in~\cite{Koh}.
 The author of~\cite{Koh} avoids derived categories by considering
the sequence of functors Ext in each cohomological degree separately.
 So he has to consider the corresponding sequence of subquotient
categories of the category of admissible smooth $G$\+modules, and
have an involutive duality on each of these subquotient categories
separately.
 In this paper we obtain an involutive duality on a triangulated
category containing the whole abelian category of admissible smooth
$G$\+modules, but this achievement comes with a price: we have to
consider the bounded derived category $\sD^\b_\acp(G\smooth_k)$ of
complexes of nonadmissible smooth $G$\+modules with admissible
cohomology $G$\+modules.
\end{rem}

\bigskip

\end{document}